\tikzstyle{every picture}+=[remember picture]
\tikzstyle{colore1}=[ball color=SandyBrown] 
\tikzstyle{colore2}=[ball color=White] 
\tikzstyle{coloreno}=[ball color=Red] 
\tikzstyle{colore3}=[ball color=LightGrey] 
\tikzstyle{colore4}=[ball color=LightGrey] 
\tikzstyle{freccia}=[thick] 
\newcommand{\LF}{{\mathcal L}}
\newcommand{\shortform}[1]{\ensuremath{{\scriptstyle\boldsymbol{#1}}}}
\newcommand{\Id}{\mathop{\mathrm{Id}}}
\newcommand{\Cl}[1]{\mathrm{Cl}(#1)}
\newcommand{\Spin}[1]{\ensuremath{\text{\upshape\rmfamily Spin}(#1)}}
\newcommand{\SO}[1]{\ensuremath{\text{\upshape\rmfamily SO}(#1)}}
\newcommand{\U}[1]{\ensuremath{\text{\upshape\rmfamily U}(#1)}}
\newcommand{\Gtwo}{\ensuremath{\text{\upshape\rmfamily G}_2}}
\newcommand{\SU}[1]{\ensuremath{\text{\upshape\rmfamily SU}(#1)}}
\newcommand{\End}[1]{\ensuremath{\text{\upshape\rmfamily End}(#1)}}
\newcommand{\Sp}[1]{\ensuremath{\text{\upshape\rmfamily Sp}(#1)}}
\newcommand{\ug}{\;\shortstack{{\tiny\upshape def}\\=}\;}
\newcommand{\snform}{\ensuremath{\Phi}}
\newcommand{\spinform}[1]{\ensuremath{\Phi_{\Spin{#1}}}}
\newcommand{\liespin}[1]{\mathop{\mathfrak{spin}}(#1)}
\newcommand{\liesp}[1]{\mathop{\mathfrak{sp}}(#1)}
\newcommand{\lieso}[1]{\mathop{\mathfrak{so}}(#1)}
\newcommand{\lie}[1]{\mathop{\mathfrak{#1}}}
\newcommand{\lieu}[1]{\mathop{\mathfrak{u}}(#1)}
\newcommand{\Ffour}{\mathrm{F}_4}
\newcommand{\Esix}{\mathrm{E}_6}
\newcommand{\Eseven}{\mathrm{E}_7}
\newcommand{\Eeight}{\mathrm{E}_8}
\newcommand{\EI}{\mathrm{E\,I}}
\newcommand{\EII}{\mathrm{E\,II}}
\newcommand{\EIII}{\mathrm{E\,III}}
\newcommand{\EIV}{\mathrm{E\,IV}}
\newcommand{\EV}{\mathrm{E\,V}}
\newcommand{\EVI}{\mathrm{E\,VI}}
\newcommand{\EVII}{\mathrm{E\,VII}}
\newcommand{\EVIII}{\mathrm{E\,VIII}}
\newcommand{\EIX}{\mathrm{E\,IX}}
\newcommand{\FI}{\mathrm{F\,I}}
\newcommand{\FII}{\mathrm{F\,II}}
\newcommand{\GI}{\mathrm{G\,I}}
\newcommand{\blocktext}{\mathrm{block}}
\newcommand{\coniugiobaseraw}{\mathrm{D}}
\newcommand{\coniugiobase}[1]{\coniugiobaseraw_{#1}}
\newcommand{\form}[1]{\ensuremath{\snform_{#1}}}
\newcommand{\norm}[1]{\Vert #1\Vert}
\newcommand{\CC}{\mathbb{C}}   
\newcommand{\HH}{\mathbb{H}}   
\newcommand{\RR}{\mathbb{R}}   
\newcommand{\ZZ}{\mathbb{Z}}
\newcommand{\OO}{\mathbb{O}}
\newcommand{\Se}{\mathbb{S}}
\newcommand{\CP}[1]{\mathbb{C}P^{#1}}
\newcommand{\OP}[1]{\mathbb{O}P^{#1}}
\newcommand{\OH}[1]{\mathbb{O}H^{#1}}
\newcommand{\HP}[1]{\mathbb{H}P^{#1}}
\newcommand{\I}{\mathcal{I}} 
\newcommand{\J}{J} 
\numberwithin{equation}{section}
\newtheorem{Theorem}{Theorem}[section]
\newtheorem*{te*}{Theorem}
\newtheorem{Proposition}[Theorem]{Proposition}
\newtheorem{Corollary}[Theorem]{Corollary}
\theoremstyle{definition}
\newtheorem{Definition}[Theorem]{Definition}
\theoremstyle{remark}
\newtheorem{Remark}[Theorem]{Remark}
\newtheorem{Example}[Theorem]{Examples}
\begin{document}

\title{The Role of $\Spin{9}$ in Octonionic Geometry}

\subjclass[2010]{Primary 53C26, 53C27, 53C35, 57R25.}
\keywords{$\Spin{9}$; octonions; vector fields on spheres; Hopf fibration; locally conformally parallel; Clifford structure; Clifford system; symmetric spaces.}

\author{Maurizio Parton}
\address{Dipartimento di Economia, Universit\`a di Chieti-Pescara, Viale della Pineta 4, I-65129 Pescara, Italy}
\email{\href{mailto:parton@unich.it}{parton@unich.it}}

\author{Paolo Piccinni}
\address{Dipartimento di Matematica, Sapienza---Universit\`a di Roma, Piazzale Aldo Moro 2, I-00185 Roma, Italy}
\email{\href{mailto:piccinni@mat.uniroma1.it}{piccinni@mat.uniroma1.it}}

\thanks{The authors were supported by the group GNSAGA of INdAM and by the PRIN Project of MIUR ``Variet\`a reali e complesse: geometria, topologia e analisi armonica''. M.~P. was also supported by Universit\`a di Chieti-Pescara, Dipartimento di Economia. P.~P. was also supported by Sapienza Universit\`a di Roma Project ``Polynomial identities and combinatorial methods in algebraic and geometric structures''.}

\dedicatory{Dedicated to the memory of Thomas Friedrich}

\begin{abstract}
Starting from the 2001 Thomas Friedrich's work on $\Spin{9}$, we review some interactions between $\Spin{9}$ and geometries related to octonions. Several topics are discussed in this respect: explicit descriptions of the $\Spin{9}$ canonical 8-form and its analogies with quaternionic geometry as well as the role of $\Spin{9}$ both in the classical problems of vector fields on spheres and in the geometry of the octonionic Hopf fibration. Next, we deal with locally conformally parallel $\Spin{9}$ manifolds in the framework of intrinsic torsion. Finally, we discuss applications of Clifford systems and Clifford structures to Cayley-Rosenfeld planes and to three series of Grassmannians.
\end{abstract}

\maketitle

\tableofcontents

\section{Introduction}\label{sec:intro}

One of the oldest evidences of interest for the $\Spin{9}$ group in geometry goes back to the 1943~Annals of Mathematics paper by D. Montgomery and H. Samelson \cite{MoSTGS}, which classifies compact Lie groups that act transitively and effectively on spheres, and gives the following list: $$\SO{n}, \; \U{n}, \; \SU{n}, \; \Sp{n}, \; \Sp{n}\cdot \U{1}, \; \Sp{n}\cdot \Sp{1}, \; \Gtwo, \; \Spin{7}, \; \Spin{9}.$$ 

In particular, $\Spin{9}$ acts transitively on the sphere $S^{15}$ through its Spin representation, and the stabilizer of the action is a subgroup $\Spin{7}$.

In the following decade, the above groups, with the only exception of $\Sp{n} \cdot \U{1}$, appeared in the celebrated M. Berger theorem \cite{BerGHH} as the list of the possible holonomy groups of irreducible, simply~connected, and non symmetric Riemannian manifolds. Next, in the decade after that, 
D.~Alekseevsky~\cite{AleRSE} proved that $\Spin{9}$ is the Riemannian holonomy of only symmetric spaces, namely of the Cayley projective plane and its non-compact dual. Accordingly, $\Spin{9}$ started to be omitted in the Berger theorem statement. Much later, a geometric proof of Berger theorem was given by C.~Olmos~\cite{OlmGPB}, using submanifold geometry of orbits and still referring to possible transitive actions on spheres.

Moreover, in the last decades of the twentieth century, compact examples have been shown to exist for almost all classes of Riemannian manifolds related to the other holonomy groups in the Berger list. References for this are the books by S. Salamon and D. Joyce \cite{SalRGH,JoyCMS,JoyRHG}. For these reasons, around~the year 2000, the best known feature of $\Spin{9}$ seemed to be that it was a group that had been removed from an interesting list.

Coming into the new millennium, since its very beginning, new interest in dealing with different aspects of octonionic geometry appeared, and new features of structures and weakened holonomies related to $\Spin{9}$ were pointed out. 
Among the references, there is, notably, the J. Baez extensive Bulletin AMS paper on octonions \cite{BaeOct} as well as the not less extensive discussions on his webpage~\cite{BaeTWF}. Next, and from a more specific point of view, there is the Thomas Friedrich paper on ``weak~$\Spin{9}$-structures'' \cite{FriWSS}, which proposes a way of dealing with a $\Spin{9}$ structure, and this was later recognized by A.~Moroianu and U. Semmelmann \cite{MoSCSR} to fit in the broader context of Clifford structures. Also, the M. Atiyah and J. Berndt paper in Surveys in Differential Geometry \cite{AtBPPS} shows interesting connections with classical algebraic geometry. Coming to very recent contributions, it~is worth mentioning the work by N.~Hitchin \cite{HitSOO} based on a talk for R. Penrose's 80th birthday, which deals with $\Spin{9}$ in relation to further groups of interest in octonionic geometry.

The aim of the present article is to give a survey of our recent work on $\Spin{9}$ and octonionic geometry, in part also with L. Ornea and V. Vuletescu, and mostly contained in the references \cite{PaPSAC,PaPSMS,OPPSGO,PaPECS,PPVCSO,PicCSE,PicSGC}. 

Our initial motivation was to give a construction, as simple as possible, of the canonical octonionic 8-form $\spinform{9}$ that had been defined independently through different integrals  by M. Berger \cite{BerCCP} and by R. Brown and A.~Gray \cite{BrGRMH}. Our construction of $\spinform{9}$ uses the already mentioned definition of a~$\Spin{9}$-structure proposed by Thomas Friedrich and has a strong analogy with the construction of a~$\Sp{2}\cdot \Sp{1}$-structure in dimension 8 (see Section \ref{Canonical differential forms} as well as \cite{PaPSAC}). By developing our construction of $\spinform{9}$, we realized that some features of the $S^{15}$ sphere can be conveniently described through the same approach that we used. The fact that $S^{15}$ is the lowest dimensional sphere that admits more than seven global linearly independent tangent vector fields is certainly related to the Friedrich point of view. Namely, by developing a convenient linear algebra, 
we were able to prove that the full system of maximal linearly independent vector fields on any $S^n$ sphere can be written in terms of the unit imaginary elements in $\CC,\HH,\OO$ and the complex structures that Friedrich associates with $\Spin{9}$ (see Section~\ref{Spheres} and \cite{PaPSMS}). Another feature of $S^{15}$ is, of course, that it represents the total space of the octonionic Hopf fibration, whose group of symmetries is $\Spin{9} \subset \SO{16}$. Here, the Friedrich approach to $\Spin{9}$ allows to recognize both the non-existence of nowhere zero vertical vector fields and some simple properties of locally conformally parallel $\Spin{9} $-structures (here, see~Theorem~\ref{te:A}, Section~\ref{se:lcp}, and \cite{OPPSGO}). We then discuss the broader contexts of Clifford structures and Clifford systems, that allow us to deal with the complex Cayley projective plane, whose geometry and topology can be studied by referring to its projective algebraic model, known as the fourth Severi variety. With similar methods, one can also study the structure and properties of the remaining two Cayley-Rosenfeld projective planes (for all of this, see Sections \ref{Clifford}--\ref{Rosenfeld}, and \cite{PaPECS,PPVCSO}). Finally, Clifford structures and Clifford systems can be studied in relation with the exceptional symmetrical spaces of compact type as well as with some real, complex, and quaternionic Grassmannians that carry a~geometry very much related to octonions (see Sections \ref{Exceptional} and \ref{Grassmannians}, and 
\cite{PicCSE,PicSGC}).

During the years of our work, we convinced ourselves that $\Spin{9}$ influences not only 16-dimensional Riemannian geometry, but also aspects related to octonions of some lower dimensional and higher dimensional geometry. It is, in fact, our hope that the reader of this survey can share the feeling of the beauty of $\Spin{9}$, that seems to have some role in geometry, besides being a~group that had been removed from an interesting list.

\section{Preliminaries, Hopf Fibrations, and Friedrich's work}\label{Preliminaries}

The multiplication involved in the algebra ($\OO$) of octonions can be defined from the one in quaternions ($\HH$) by the Cayley--Dickson process: if $x=h_1+h_2 e$, $x'=h'_1+h'_2e \in \OO$, then
\begin{equation}\label{oct}
xx'=(h_1h'_1-\overline h'_2 h_2) + (h_2 \overline h'_1 + h'_2 h_1)e,
\end{equation}
where $\overline h'_1, \overline h'_2$ are the conjugates of $h'_1, h'_2 \in \HH$. As for quaternions, the conjugation $\overline{x}=\overline h_1-h_2e$ 
is related to the non-commutativity: $\overline{x x'}=\overline x'\overline x.$ The associator 
\[
[x,x',x''] = (xx')x''-x(x'x'')
\] 
vanishes whenever two among $x,x',x'' \in \OO$ are equal or conjugate. For a survey on octonions and their applications in geometry, topology, and mathematical physics, the excellent article ~\cite{BaeOct} by J. Baez is a basic reference.

The 16-dimensional real vector space $\OO^2$ decomposes into its \emph{octonionic lines,}  
\[
l_m\ug\{(x,mx)\vert x\in\OO\} \quad \text{or} \quad l_\infty\ug\{(0,x)\vert x\in\OO \},
\] 
that intersect each other only at the origin $(0,0) \in \OO^2$. Here, $m \in S^8 =  \OP{1} = \OO \cup \{\infty\}$ parametrizes the set of octonionic lines ($l$), whose volume elements ($\nu_{l} \in \Lambda^8 l$) allow the following \emph{canonical 8-form} on $\OO^2 = \RR^{16}$ to be defined:
\begin{equation}\label{8form}
\spinform{9} =\frac{110880}{\pi^4} \int_{\OP{1}}p_l^*\nu_l\,dl\in\Lambda^8(\RR^{16}), 
\end{equation}
where $p_l$ denotes the orthogonal projection $\OO^2 \rightarrow l$. 

The definition of $\spinform{9}$ through this integral was given by M. Berger \cite{BerCCP}, and here we chose the proportionality factor in such a way to make integers 
and with no common factors  the coefficients of $\spinform{9}$
as exterior $8$-form in $\RR^{16}$. 
The notation is motivated by the following:

\begin{Proposition}[\cite{CorASH}]\label{de:spin9} 
The subgroup of $\mathrm{GL}(16, \RR)$ preserving $\spinform{9}$ is the image of $\Spin{9}$ under its spin representation into $\RR^{16}$.
\end{Proposition}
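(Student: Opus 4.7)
The plan is to prove the two inclusions $\Spin{9}\subseteq H$ and $H\subseteq\Spin{9}$ separately, where $H\subseteq\GL{16,\RR}$ denotes the stabilizer of $\spinform{9}$. The forward inclusion follows almost directly from the geometric definition \eqref{8form}. Through its spin representation, $\Spin{9}$ embeds into $\SO{16}$, so every element acts on $\OO^2=\RR^{16}$ as an orientation-preserving isometry, and it acts transitively on the base $\OP{1}\cong S^8$ of octonionic lines, preserving the induced round measure $dl$. The equivariance $p_{g\cdot l}=g\circ p_l\circ g^{-1}$, together with the fact that $g$ carries the oriented volume $\nu_l$ to $\nu_{g\cdot l}$, gives $g^*(p_{g\cdot l}^*\nu_{g\cdot l})=p_l^*\nu_l$, and the change of variable $l\mapsto g\cdot l$ in the integral yields $g^*\spinform{9}=\spinform{9}$.

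For the reverse inclusion I would first observe that $H$ is a closed algebraic subgroup of $\GL{16,\RR}$ and argue that $H\subseteq\Or{16}$. Because the spin representation is irreducible, Schur's lemma guarantees that the only $\Spin{9}$-invariant non-degenerate symmetric bilinear form on $\RR^{16}$ is, up to scalar, the Euclidean product; one then checks that this product can be recovered intrinsically from $\spinform{9}$ through appropriate wedge-and-contract operations reproducing a multiple of the metric, so that every $g\in H$ is forced to be an isometry. It then remains to compute the infinitesimal stabilizer $\mathfrak{h}=\{A\in\lieso{16}\ \vert\ A\cdot\spinform{9}=0\}$ and verify $\mathfrak{h}=\liespin{9}$, after which connectedness and an argument on components close the inclusion $H^0=\Spin{9}$.

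The main obstacle is this infinitesimal identification. Since $\liespin{9}$ sits in $\lieso{16}$ with codimension $84$, pure dimension counting is far from sufficient, and one must exploit the explicit algebraic structure of $\spinform{9}$. A workable route is to realise $\Spin{9}\subset\SO{16}$ through the Clifford system of nine pairwise anticommuting self-adjoint involutions reviewed in Section~\ref{Canonical differential forms}: in those coordinates $\spinform{9}$ becomes an explicit polynomial in the associated invariant $2$-forms, and the condition $A\cdot\spinform{9}=0$ can be analyzed monomial-by-monomial to show that $A$ must normalise the Clifford system, hence belongs to $\liespin{9}$. A parallel but more conceptual strategy, closer in spirit to Corlette's original argument, would instead use the transitive classification on spheres recalled in Section~\ref{sec:intro}: a compact connected subgroup of $\SO{16}$ strictly containing $\Spin{9}$ would act transitively on $S^{15}$ with a proper supergroup of $\Spin{7}$ as stabilizer, and the Montgomery--Samelson list forces such a group to be $\SO{16}$ or $\U{8}$, neither of which preserves $\spinform{9}$ as one can verify on a single monomial.
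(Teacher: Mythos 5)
The paper gives no proof of this proposition; it is quoted from Corlette's paper \cite{CorASH}, so there is no in-text argument to compare against. On its own merits your outline has the right architecture, and the forward inclusion via equivariance of the Berger integral \eqref{8form} is correct. The reverse inclusion, however, has a genuine gap at its most delicate point, namely the reduction from $\mathrm{GL}(16,\RR)$ to the (conformal) orthogonal group. You assert that the Euclidean metric can be recovered from $\spinform{9}$ ``through appropriate wedge-and-contract operations,'' but you neither carry this out nor cite where it is done; the analogous recovery of a metric from a stable $3$-form (for $\Gtwo$) or $4$-form (for $\Spin{7}$) is well known but genuinely nontrivial, and no such explicit formula for $\spinform{9}$ appears in the sources this paper cites. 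Without that step, $H$ could a priori contain non-orthogonal elements, and everything that follows in your sketch --- the Lie-algebra computation $\mathfrak h\subset\lieso{16}$, or the appeal to Montgomery--Samelson, which is a statement about compact transformation groups --- simply does not reach them. You also leave unexecuted the infinitesimal identification $\mathfrak h=\liespin{9}$, the treatment of orientation-reversing elements of $\Or{16}$, the handling of scalars $\lambda\,\Id$ with $\lambda^8=1$, and the closing ``argument on components,'' each of which needs at least a sentence.

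On the Montgomery--Samelson alternative there is also a small factual slip: $\U{8}$ does not belong on your shortlist. The $16$-dimensional spin representation of $\Spin{9}$ is of real type, so its commutant in $\End{\RR^{16}}$ is $\RR$, there is no $\Spin{9}$-invariant complex structure, and hence no conjugate of $\Spin{9}$ inside $\U{8}$. The only group on the Montgomery--Samelson list for $S^{15}$ that properly contains $\Spin{9}$ is $\SO{16}$, which fixes no nonzero $8$-form since the $\SO{n}$-invariants in $\Lambda^{\ast}\RR^{n}$ are spanned by $1$ and the volume form. This actually makes the Montgomery--Samelson route cleaner than you present it, but only after the reduction to $\SO{16}$ is secured --- and that reduction is exactly what is missing from the argument.
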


Thus, $\Spin{9} \subset \SO{16}$, so that 16-dimensional oriented Riemannian manifolds are the natural setting for $\Spin{9}$-structures.
The following definition was proposed by Th. Friedrich, \cite{FriWSS}.

\begin{Definition}\label{de:spin9structure}
Let $(M,g)$ be a 16-dimensional oriented Riemannian manifold. A $\Spin{9}$ \emph{structure} on $M$ is the datum of any of the following equivalent alternatives.
\begin{enumerate}
\item A rank $9$ subbundle, $E=E^9\subset\End{TM}$, locally spanned by endomorphisms $\{\I_\alpha\}_{\alpha=1,\dots 9}$ with
\begin{equation}\label{top}
\I^2_\alpha = \Id, \qquad \I^*_\alpha = \I_\alpha, \quad \text{and} \quad \I_\alpha \I_\beta = - \I_\beta \I_\alpha \quad\text{for}\quad \alpha \neq \beta,
\end{equation}
where $\I^*_\alpha$ denotes the adjoint of $\I_\alpha$.
\item A reduction, $\mathcal{R}$, of the principal bundle, $\mathcal F (M)$, of orthonormal frames from $\SO{16}$ to $\Spin{9}$.
\end{enumerate}
\end{Definition}

In particular, the existence of a $\Spin{9}$ structure depends only on the conformal class of the metric $g$ on $M$.

We now describe the vector bundle $E^9$ when $M$ is the model space ($\RR^{16}$). Here, $\I_1, \dots, \I_9$ can be chosen as generators of the Clifford algebra ($\Cl{9}$), the endomorphisms's algebra of its $16$-dimensional real representation, $\Delta_9 = \RR^{16} = \OO^2$.
Accordingly, unit vectors ($v \in S^8 \subset \RR^9$) can be viewed via the Clifford multiplication as symmetric endomorphisms: $v: \Delta_9 \rightarrow \Delta_9$. 

The explicit way to describe this action is by $v = u + r \in S^8$ ($u \in \OO$, $r \in \RR$, $u\overline u + r^2 =1$), acting on pairs $(x,x') \in \OO^2$:
\begin{equation}\label{HarSpC}
\left(
\begin{array}{c}
x \\
x'
\end{array}
\right)
\longrightarrow
\left(
\begin{array}{cc}
r & R_{\overline u} \\ 
R_u & -r
\end{array}
\right)  
\left(
\begin{array}{c} 
x \\ 
x'
\end{array}
\right),
\end{equation}
where $R_u, R_{\overline u}$ denotes the right multiplications by $u, \overline u$, respectively  (cf.~\cite{HarSpC} (p. 288)).

A basis of the standard $\Spin{9}$ structure on $\OO^2 = \RR^{16}$ can be written by looking at action~\eqref{HarSpC} and at the following nine vectors: 
\[
(0,1),(0,i),(0,j),(0,k),(0,e),(0,f),(0,g),(0,h)\quad\text{and}\quad(1,0) \in S^8 \subset \OO \times \RR = \RR^9,
\]
where $f=ie$, $g=je$, and $h=ke$, and their products are ruled by \eqref{oct}.
This gives the following symmetric endomorphisms:
\begin{equation}\label{eq:IO}
\begin{aligned}
\I_1&=\left(
\begin{array}{c|c}
0 & \Id \\ \hline
\Id & 0
\end{array}
\right)\enspace,\qquad &
\I_2&=\left(
\begin{array}{c|c}
0 & -R_i \\ \hline
R_i & 0
\end{array}
\right)\enspace,\qquad &
\I_3&=\left(
\begin{array}{c|c}
0 & -R_j \\ \hline
R_j & 0
\end{array}
\right)\enspace, \\
\I_4&=\left(
\begin{array}{c|c}
0 & -R_k \\ \hline
R_k & 0
\end{array}
\right)\enspace,\qquad &
\I_5&=\left(
\begin{array}{c|c}
0 & -R_e \\ \hline
R_e & 0
\end{array}
\right)\enspace,\qquad &
\I_6&=\left(
\begin{array}{c|c}
0 & -R_f\\ \hline
R_f & 0
\end{array}
\right)\enspace, \\
\I_7&=\left(
\begin{array}{c|c}
0 & -R_g \\ \hline
R_g & 0
\end{array}
\right)\enspace,\qquad &
\I_8&=\left(
\begin{array}{c|c}
0 & -R_h \\ \hline
R_h & 0
\end{array}
\right)\enspace,\qquad &
\I_9&=\left(
\begin{array}{c|c}
\Id & 0 \\ \hline
0 & -\Id
\end{array}
\right)\enspace,
\end{aligned}
\end{equation}
where $R_i,\dots,R_h$ are the right multiplications by the 7 unit octonions, $i,\dots,h$. Their spanned subspace
\begin{equation}\label{eq:V9}
E^9\ug<\I_1,\dots,\I_9>\subset\End{\RR^{16}}
\end{equation}
is such that the following proposition applies.

\begin{Proposition}[\cite{CorASH}]
The subgroup of $\SO{16}$ preserving $E^9$ is $\Spin{9}.$
\end{Proposition}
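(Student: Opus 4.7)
My plan is to reduce this to Proposition \ref{de:spin9}, which identifies $\Spin{9}$ with the subgroup of $\GL{16,\RR}$ preserving the canonical 8-form $\spinform{9}$. Setting $G=\{g\in\SO{16}\colon gE^9g^{-1}=E^9\}$, I aim to establish the two inclusions $\Spin{9}\subset G$ and $G\subset\Spin{9}$ separately, the latter by showing that preservation of $E^9$ forces preservation of $\spinform{9}$.

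The inclusion $\Spin{9}\subset G$ is essentially representation-theoretic. Under the spin representation on $\Delta_9=\RR^{16}$, conjugation by $g\in\Spin{9}$ intertwines Clifford multiplication by $v\in\RR^9\subset\Cl{9}$ with the natural action of $\rho(g)\in\SO{9}$ on $v$, where $\rho\colon\Spin{9}\to\SO{9}$ is the standard double cover. Since the generators $\I_1,\dots,\I_9$ in \eqref{eq:IO} are precisely the images of the standard orthonormal basis of $\RR^9$ under the Clifford multiplication \eqref{HarSpC}, conjugation by $g$ rotates them via $\rho(g)$ and in particular preserves their span $E^9$.

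For the converse, my strategy is to exhibit $\spinform{9}$ as a functional of the pair $(E^9,g_{\text{eucl}})$ alone, so that every $g\in G$ automatically preserves it and hence lies in $\Spin{9}$ by Proposition \ref{de:spin9}. The Frobenius pairing $\langle A,B\rangle=\tfrac{1}{16}\mathrm{tr}(A^TB)$ on $\End{\RR^{16}}$ is $\SO{16}$-invariant under conjugation, and using $\I_\alpha^*=\I_\alpha$, $\I_\alpha^2=\Id$ together with the anticommutation relations in \eqref{top}, it restricts on $E^9$ to an inner product in which $\I_1,\dots,\I_9$ are orthonormal. In particular, the unit sphere $S^8\subset E^9$ is intrinsic. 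Each unit $v\in E^9$ is a traceless symmetric involution, so its $\pm1$ eigenspaces are both $8$-dimensional; I would show that the $+1$ eigenspace $l_v$ is precisely an octonionic line in the sense of \eqref{8form}, giving a bijective identification $S^8\cong\OP{1}$. Since the orthogonal projection $p_{l_v}$ and the volume element $\nu_{l_v}$ depend only on $v$ and the Euclidean structure of $\RR^{16}$, integral \eqref{8form} then reconstructs $\spinform{9}$ out of $E^9$ alone, and any $g\in G$ preserves $\spinform{9}$.

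The main obstacle I foresee is the precise matching $v\mapsto l_v$ between $S^8\subset E^9$ and the space $\OP{1}$ of octonionic lines. Direct inspection of \eqref{eq:IO} shows that $l_{\I_9}=\OO\times\{0\}=l_0$, and one can then invoke the $\Spin{9}$-equivariance from the first inclusion — together with the transitivity of $\SO{9}$ on $S^8$ and of $\Spin{9}$ on $\OP{1}$ — to transport this identification to every other unit element of $E^9$. Once this step is pinned down, the two inclusions close up and give $G=\Spin{9}$.
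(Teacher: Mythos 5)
The paper itself offers no proof of this statement --- it is quoted from the reference \cite{CorASH} --- so your argument has to stand on its own. Your first inclusion $\Spin{9}\subset G$ is correct and standard: conjugation intertwines Clifford multiplication with the vector representation, so the span $E^9$ is preserved. The genuine gap is in the converse, at the sentence claiming that ``the volume element $\nu_{l_v}$ depends only on $v$ and the Euclidean structure of $\RR^{16}$.'' It does not: $\nu_{l_v}$ is defined only up to sign, since an $8$-dimensional eigenspace of a symmetric involution carries no canonical orientation. The integral \eqref{8form} requires a coherent orientation of the tautological $8$-plane bundle over $\OP{1}$, and whether an element $g\in G$ respects that coherent choice is exactly the content of the proposition, not a consequence of naturality; without it you only get $g^*\spinform{9}=\pm\spinform{9}$ (the sign is constant by connectedness of $\OP{1}$), and Proposition \ref{de:spin9} cannot be invoked in the minus case. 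That this is a real issue and not a pedantic one is shown by the closest analogue, the rank-$3$ bundle $E^3\subset\End{\RR^4}$ spanned by the Pauli matrices \eqref{eq:Ipauli}: complex conjugation of $\CC^2$ lies in $\SO{4}$, fixes $\I_1,\I_3$ and sends $\I_2\mapsto-\I_2$, hence preserves $E^3$ and permutes the complex lines, yet it reverses their orientations and sends the analogous canonical form (the K\"ahler $2$-form, which is the corresponding line integral) to its negative. So ``$g$ preserves $E$'' does not formally imply ``$g$ preserves the canonical form''; the stabilizer of $E^3$ in $\SO{4}$ is strictly larger than $\U{2}$, and your scheme, applied verbatim one octave lower, would prove a false statement.

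To close the gap you need an input special to the octonionic case. One clean fix inside your own framework: replace the integral by $\tau_4(\psi)$, which the paper shows equals $360\,\spinform{9}$. Since the characteristic polynomial of the matrix $\psi=(\psi_{\alpha\beta})$ is unchanged when the orthonormal frame of $E^9$ is rotated by any $A\in\Or{9}$ (one has $\psi\mapsto A\psi A^{T}$, and $\det A^2=1$), the form $\tau_4(\psi)$ is attached to the pair $(E^9,\text{metric})$ with no orientation choice at all; hence every $g\in G$ preserves $\spinform{9}$ and Proposition \ref{de:spin9} applies. Alternatively, argue algebraically: conjugation gives a homomorphism $G\to\Or{9}$ whose kernel consists of elements commuting with all $\I_\alpha$ of \eqref{eq:IO}, hence of the scalars $\pm\Id$, because $\I_1,\dots,\I_8$ already generate the full algebra $\End{\RR^{16}}$ (as $\Cl{8}\cong\RR(16)$ is simple); for the same reason no element of $\SO{16}$ can fix $\I_1,\dots,\I_8$ and send $\I_9\mapsto-\I_9$, so the image of $G$ is exactly $\SO{9}$, and comparing with $\Spin{9}\subset G$, which also surjects onto $\SO{9}$ with kernel $\{\pm\Id\}$, yields $G=\Spin{9}$. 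Either supplement turns your outline into a proof; as written, the decisive step is assumed rather than proved.
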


\tikzstyle{format}=[draw=none]
The projection $\OO^2 -\{0\} \rightarrow \OP{1}$ associated with decomposition into the octonionic lines $l_m$, $l_\infty$ is a non-compact version of the octonionic Hopf fibration:
\[
S^{15} \rightarrow \OP{1} \cong S^8,
\]
that is the unique surviving possibility when passing from quaternions to octonions from the series of quaternionic Hopf fibrations:
\[
S^{4n+3} \rightarrow \HH P^n.
\]

Recall that the latter enter into Figure~\ref{fig:proto} that encodes prototypes of several structures of interest in quaternionic geometry.  

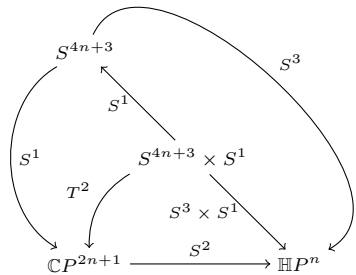
\begin{figure}[H]
{\scriptsize
\begin{center}

\begin{tikzpicture}[node distance=2cm,auto]

\draw node[] (S4n+3) {$S^{4n+3}\times S^1$};
	\draw[->] node[format, below left of=S4n+3] (CP2n+1) {$\CP{2n+1}$}
	(S4n+3) edge [bend right = 30] node[swap] {\tiny{$T^2$}} (CP2n+1);
\draw[->] node[format, below right of=S4n+3] (HPn) {$\HP{n}$}
	(S4n+3) edge [left = 30] node {\tiny{$S^3\times S^1$}} (HPn);
\draw[->] (CP2n+1) edge node {\tiny{$S^2$}} (HPn);
\draw[->] node[format, above left of=S4n+3] (S) {$S^{4n+3}$}
	(S4n+3) edge [left = 30] node {\tiny{$S^1$}} (S);
\draw[->] (S) edge [bend right = 60] node {\tiny{$S^1$}} (CP2n+1);
\draw[->] (S) edge [bend left = 108] node {\tiny{$S^3$}} (HPn);

\end{tikzpicture}
\end{center}
}\caption{The prototype of foliations related to locally conformally hyperk\"ahler manifolds.}\label{fig:proto}
\end{figure}


At the center of the diagram, there is the locally conformally hyperk\"ahler Hopf manifold $S^{4n+3}\times S^1$. All the other manifolds are leaf spaces of foliations on them, such as the 3-Sasakian sphere $S^{4n+3}$, the~positive K\"ahler-Einstein twistor space $\CP{2n+1}$, and the positive quaternion K\"ahler $\HP{n}$. Most of the foliations carry similar structures on their leaves, for example, one has locally conformally hyperk\"ahler Hopf surfaces of $S^3\times S^1$. This prototype diagram is only an example, since, \mbox{when a compact} locally hyperk\"ahler manifold has compact leaves on the four canonically defined vertical foliations, our~diagram still makes sense, albeit in the broader orbifold category (cf. \cite{OrPLCK}).

When $n=3$, there are also the octonionic Hopf fibrations, as in Figure~\ref{fig:oct}: 

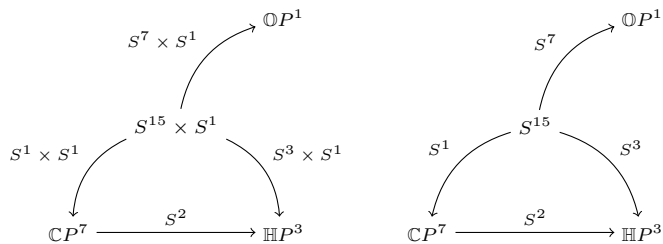
\begin{figure}[H] 
{\scriptsize
\begin{center}
\begin{tikzpicture}[node distance=2cm,auto]

\draw node[] (S15) {$S^{15}\times S^1$};
	\draw[->] node[format, below left of=S15] (CP7) {$\CP{7}$}
	(S15) edge [bend right = 30] node[swap] {\tiny{$S^1\times S^1$}} (CP7);
\draw[->] node[format, below right of=S15] (HP3) {$\HP{3}$}
	(S15) edge [bend left = 30] node {\tiny{$S^3\times S^1$}} (HP3);
\draw[->] (CP7) edge node {\tiny{$S^2$}} (HP3);
\draw[->] node[format, above right of=S15] (OP1) {$\OP{1}$}
	(S15) edge [bend left = 30] node {\tiny{$S^7\times S^1$}} (OP1);
	
\end{tikzpicture}
\qquad 
\begin{tikzpicture}[node distance=2cm,auto]

\draw node[] (S15) {$S^{15}$};
	\draw[->] node[format, below left of=S15] (CP7) {$\CP{7}$}
	(S15) edge [bend right = 30] node[swap] {\tiny{$S^1$}} (CP7);
\draw[->] node[format, below right of=S15] (HP3) {$\HP{3}$}
	(S15) edge [bend left = 30] node {\tiny{$S^3$}} (HP3);
\draw[->] (CP7) edge node {\tiny{$S^2$}} (HP3);
\draw[->] node[format, above right of=S15] (OP1) {$\OP{1}$}
	(S15) edge [bend left = 30] node {\tiny{$S^7$}} (OP1);
\end{tikzpicture}
\end{center}
}
\caption{In the octonionic case, an additional fibration appears.}\label{fig:oct}
\end{figure}

These have no arrow connecting $\OP{1}$ with $\HP{3}$ and $\CP{7}$, since the complex and quaternionic Hopf fibrations are not subfibrations of the octonionic one (cf. \cite{LoVHFO} as well as Theorem \ref{te:A} in the following Section \ref{Hopf}).

Coming back to $\Spin{9}$, as the title of Th. Friedrich's article \cite{FriWSS} suggests, there is a scheme for ``weak $\Spin{9}$ structures'' to include some possibilities besides the very restrictive holonomy $\Spin{9}$ condition. Although the original A. Gray proposal \cite{GraWHG} to look at ``weak holonomies'' was much later shown by B. Alexandrov \cite{AleWeH} not to produce new geometries for the series of groups quoted in the Introduction, one can refer to the symmetries of relevant tensors to understand the possibilities for any $G$-structure. We briefly recall a unified scheme that one can refer to, following the presentation of Ref.~\cite{AgrSLN}.

By definition, a $G$-structure on an oriented Riemannian manifold ($M^n$) is a reduction ($\mathcal R \subset \mathcal F(M^n)$) of the orthonormal frame bundle to the subgroup $G \subset \SO{n}$. The Levi Civita connection ($Z$),  thought~of as a 1-form on $\mathcal F (M^n)$ with values in the Lie algebra $\lieso{n}$, restricts to a connection on $\mathcal R$, decomposing with respect to the Lie algebra splitting $\lieso{n}= \mathfrak g \oplus \mathfrak m$, as 
\[
Z\vert_{T(\mathcal R)} = Z^* \oplus \Gamma. 
\]

Here, $Z^*$ is a connection in the principal $G$-bundle $\mathcal R$, and $\Gamma$ is a 1-form on $M^n$ with values in the associated bundle $\mathcal R \times_G \mathfrak m$, called the \emph{intrinsic torsion} of the $G$-structure. Of course, the condition $\Gamma =0$ is equivalent to the inclusion $\text{Hol} \subset G$ for the Riemannian holonomy, and $G$-structures with $\Gamma \neq 0$ are called non-integrable. 

This scheme can be used, in particular, when $G$ is the stabilizer of some tensor $\eta$ in $\mathbb R^n$, so that the $G$-structure on $M$ defines a global tensor $\eta$, and here, $\Gamma = \nabla \eta$ can be conveniently thought of as a~
section of the vector bundle:
\[
\mathcal W = T^* \otimes \mathfrak m.
\]

Accordingly, the action of $G$ splits $\mathcal W$ into irreducible $G$-components: $\mathcal W = \mathcal W_1 \oplus \dots \oplus\mathcal W_k$. 

A prototype of such decompositions occurs when $G=\U{n} \subset \SO{2n}$ and yields the four irreducible components of the so-called Gray--Hervella classification when $n \geq 3$ \cite{GrHSCA}. It is a fact from the representation theory that there are several further interesting cases that yield four irreducible components. This occurs when $G=\Gtwo \subset \SO{7}, G= \Sp{2} \cdot \Sp{1} \subset \SO{8}, G=\Spin{9} \subset \SO{16}$, as computed in Refs. \cite{FeGRMS}, \cite{SwaHQK} (p. 115) and \cite{FriWSS}, respectively:
\begin{equation}\label{W}
\mathcal W = \mathcal W_1 \oplus \mathcal W_2 \oplus \mathcal W_3  \oplus \mathcal W_4.
\end{equation}

For the mentioned four situations, the last component, $\mathcal W_4$, is the ``vectorial type'' one, and gives rise to a 1-form $\theta$ on the manifold. A general theory of $G$-structures in this last class, $\mathcal W_4$, of typically locally conformally parallel $G$-structures, was developed in Ref. \cite{AgFGSV}.  In Section \ref{se:lcp}, we revisit $\mathcal W_4$ in the $G=\Spin{9}$ case, following Ref. \cite{FriWSS} as well as our previous work \cite{OPPSGO}. 

\section{The canonical 8-form $\spinform{9}$}\label{Canonical differential forms}

The $\Lambda^2\RR^{16}$ space of $2$-forms in $\RR^{16}$ decomposes under $\Spin{9}$ as
\begin{equation}\label{decomposition}
\Lambda^2\RR^{16} = \Lambda^2_{36} \oplus \Lambda^2_{84}
\end{equation}
(cf.~\cite{FriWSS} (p. 146)), where  $\Lambda^2_{36} \cong \liespin{9}$ and $ \Lambda^2_{84} =\mathfrak{m}$ is an orthogonal complement in $\Lambda^2 \cong \lieso{16}$. Explicit bases of both subspaces can be written by looking at the nine generators \eqref{eq:IO} of  the $E^9$ vector space that defines the $\Spin{9}$ structure. Namely, one has the compositions $$\J_{\alpha \beta} \ug \I_\alpha \I_\beta,$$ for $\alpha <\beta$ as a basis of $\Lambda^2_{36} \cong \liespin{9}$ and the compositions $\J_{\alpha \beta \gamma} \ug \I_\alpha \I_\beta \I_\gamma$ for $\alpha <\beta<\gamma$ as a basis of~$\Lambda^2_{84}$.

The K\"ahler $2$-forms ($\psi_{\alpha \beta}$) of the complex structures $\J_{\alpha \beta}$, obtained by denoting the coordinates in $\OO^2\cong\RR^{16}$ by $(1,\dots,8,1',\dots,8')$ are 
{
\small
\begin{myequation}\label{28}
\begin{aligned}
\psi_{12}&=(-\shortform{12}+\shortform{34}+\shortform{56}-\shortform{78})-(\enspace)\shortform{'}\enspace,\quad  &\psi_{13}&=(-\shortform{13}-\shortform{24}+\shortform{57}+\shortform{68})-(\enspace)\shortform{'}\enspace, \quad &\psi_{14}&=(-\shortform{14}+\shortform{23}+\shortform{58}-\shortform{67})-(\enspace)\shortform{'}\enspace, \\ 
\psi_{15}&=(-\shortform{15}-\shortform{26}-\shortform{37}-\shortform{48})-(\enspace)\shortform{'}\enspace, \quad &\psi_{16}&=(-\shortform{16}+\shortform{25}-\shortform{38}+\shortform{47})-(\enspace)\shortform{'}\enspace,\quad  &\psi_{17}&=(-\shortform{17}+\shortform{28}+\shortform{35}-\shortform{46})-(\enspace)\shortform{'}\enspace, \\
\psi_{18}&=(-\shortform{18}-\shortform{27}+\shortform{36}+\shortform{45})-(\enspace)\shortform{'}\enspace,\quad  &\psi_{23}&=(-\shortform{14}+\shortform{23}-\shortform{58}+\shortform{67})+(\enspace)\shortform{'}\enspace, \quad &\psi_{24}&=(\shortform{13}+\shortform{24}+\shortform{57}+\shortform{68})+(\enspace)\shortform{'}\enspace,\\ 
\psi_{25}&=(-\shortform{16}+\shortform{25}+\shortform{38}-\shortform{47})+(\enspace)\shortform{'}\enspace, \quad &\psi_{26}&=(\shortform{15}+\shortform{26}-\shortform{37}-\shortform{48})+(\enspace)\shortform{'}\enspace,\quad  &\psi_{27}&=(\shortform{18}+\shortform{27}+\shortform{36}+\shortform{45})+(\enspace)\shortform{'}\enspace, \\
\psi_{28}&=(-\shortform{17}+\shortform{28}-\shortform{35}+\shortform{46})+(\enspace)\shortform{'}\enspace,\quad  &\psi_{34}&=(-\shortform{12}+\shortform{34}-\shortform{56}+\shortform{78})+(\enspace)\shortform{'}\enspace, \quad &\psi_{35}&=(-\shortform{17}-\shortform{28}+\shortform{35}+\shortform{46})+(\enspace)\shortform{'}\enspace,\\ 
\psi_{36}&=(-\shortform{18}+\shortform{27}+\shortform{36}-\shortform{45})+(\enspace)\shortform{'}\enspace, \quad &\psi_{37}&=(+\shortform{15}-\shortform{26}+\shortform{37}-\shortform{48})+(\enspace)\shortform{'}\enspace,\quad  &\psi_{38}&=(\shortform{16}+\shortform{25}+\shortform{38}+\shortform{47})+(\enspace)\shortform{'}\enspace, \\
\psi_{45}&=(-\shortform{18}+\shortform{27}-\shortform{36}+\shortform{45})+(\enspace)\shortform{'}\enspace,\quad  &\psi_{46}&=(\shortform{17}+\shortform{28}+\shortform{35}+\shortform{46})+(\enspace)\shortform{'}\enspace, \quad &\psi_{47}&=(-\shortform{16}-\shortform{25}+\shortform{38}+\shortform{47})+(\enspace)\shortform{'}\enspace,\\ 
\psi_{48}&=(\shortform{15}-\shortform{26}-\shortform{37}+\shortform{48})+(\enspace)\shortform{'}\enspace, \quad &\psi_{56}&=(-\shortform{12}-\shortform{34}+\shortform{56}+\shortform{78})+(\enspace)\shortform{'}\enspace,\quad  &\psi_{57}&=(-\shortform{13}+\shortform{24}+\shortform{57}-\shortform{68})+(\enspace)\shortform{'}\enspace, \\
\psi_{58}&=(-\shortform{14}-\shortform{23}+\shortform{58}+\shortform{67})+(\enspace)\shortform{'}\enspace,\quad  &\psi_{67}&=(\shortform{14}+\shortform{23}+\shortform{58}+\shortform{67})+(\enspace)\shortform{'}\enspace,\quad &\psi_{68}&=(-\shortform{13}+\shortform{24}-\shortform{57}+\shortform{68})+(\enspace)\shortform{'}\enspace,\\ 
\psi_{78}&=(\shortform{12}+\shortform{34}+\shortform{56}+\shortform{78})+(\enspace)\shortform{'}\enspace,
\end{aligned}
\end{myequation}
}
where $(\enspace)\shortform{'}$ denotes the $\shortform{'}$ of what appears before it, for instance
{\small
\[
\psi_{12}=(-\shortform{12}+\shortform{34}+\shortform{56}-\shortform{78})-(-\shortform{1'2'}+\shortform{3'4'}+\shortform{5'6'}-\shortform{7'8'})\enspace.
\]
}
\indent Next,
{\small
\begin{equation}\label{8}
\begin{aligned}
\psi_{19}&=-\shortform{11'}-\shortform{22'}-\shortform{33'}-\shortform{44'}-\shortform{55'}-\shortform{66'}-\shortform{77'}-\shortform{88'}\enspace, 
&
\psi_{29}&=-\shortform{12'}+\shortform{21'}+\shortform{34'}-\shortform{43'}+\shortform{56'}-\shortform{65'}-\shortform{78'}+\shortform{87'}\enspace,
\\
\psi_{39}&=-\shortform{13'}-\shortform{24'}+\shortform{31'}+\shortform{42'}+\shortform{57'}+\shortform{68'}-\shortform{75'}-\shortform{86'}\enspace, 
&
\psi_{49}&=-\shortform{14'}+\shortform{23'}-\shortform{32'}+\shortform{41'}+\shortform{58'}-\shortform{67'}+\shortform{76'}-\shortform{85'}\enspace,
\\
\psi_{59}&=-\shortform{15'}-\shortform{26'}-\shortform{37'}-\shortform{48'}+\shortform{51'}+\shortform{62'}+\shortform{73'}+\shortform{84'}\enspace, 
&
\psi_{69}&=-\shortform{16'}+\shortform{25'}-\shortform{38'}+\shortform{47'}-\shortform{52'}+\shortform{61'}-\shortform{74'}+\shortform{83'}\enspace,
\\
\psi_{79}&=-\shortform{17'}+\shortform{28'}+\shortform{35'}-\shortform{46'}-\shortform{53'}+\shortform{64'}+\shortform{71'}-\shortform{82'}\enspace, 
&
\psi_{89}&=-\shortform{18'}-\shortform{27'}+\shortform{36'}+\shortform{45'}-\shortform{54'}-\shortform{63'}+\shortform{72'}+\shortform{81'}\enspace,
\end{aligned}.
\end{equation}
}
and a computation gives the following proposition. 
\begin{Proposition}\label{pr:charpoly}
The characteristic polynomial of the matrix $\psi=(\psi_{\alpha \beta})_{\alpha,\beta=1,\dots,9}$ of the K\"ahler forms explicitly listed in ~\eqref{28} and ~\eqref{8}, reduces to
\[
\det(tI-\psi)=t^9+\tau_4(\psi)t^5+\tau_8(\psi)t\enspace.
\]
\end{Proposition}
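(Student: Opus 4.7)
The plan has two stages. First, skew-symmetry of $\psi$ as a $9\times 9$ matrix with $2$-form entries forces only odd powers of $t$ to appear in $p(t):=\det(tI-\psi)$. Second, I show that the remaining two ``extra'' coefficients, living in $\Lambda^4\RR^{16}$ and $\Lambda^{12}\RR^{16}$, must vanish because they are $\Spin{9}$-invariant, and $\Spin{9}$ admits no nonzero invariants in these degrees.

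For the first stage, the Clifford relations \eqref{top} give $\J_{\beta\alpha}=-\J_{\alpha\beta}$ and hence $\psi_{\beta\alpha}=-\psi_{\alpha\beta}$, $\psi_{\alpha\alpha}=0$. Since the entries of $\psi$ are $2$-forms, they commute under $\wedge$, so the determinant is unambiguous, and using $\psi^T=-\psi$ one has
\[
p(t)=\det(tI-\psi)=\det\bigl((tI-\psi)^T\bigr)=\det(tI+\psi),
\]
whence $p(-t)=(-1)^9 p(t)=-p(t)$. Only odd powers survive:
\[
p(t)=t^9+c_7\,t^7+c_5\,t^5+c_3\,t^3+c_1\,t,\qquad c_{9-2k}\in\Lambda^{4k}\RR^{16}.
\]

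For the second stage, each $c_{9-2k}$ equals, up to sign, the sum of the $2k\times 2k$ principal minors of $\psi$ and is therefore invariant under the $\SO{9}$-conjugation $\psi\mapsto A\psi A^T$ that corresponds to changes of the local basis $\{\I_\alpha\}$ of $E^9$. Since a $\Spin{9}$-structure in the sense of Definition \ref{de:spin9structure} is exactly such an $\SO{9}$-reduction of frames, each $c_{9-2k}$ is a $\Spin{9}$-invariant exterior form on $\RR^{16}$. By Cartan's theorem applied to the symmetric space $\OP{2}=\Ffour/\Spin{9}$, the ring of $\Spin{9}$-invariant exterior forms on $\Delta_9=\RR^{16}$ is isomorphic to $H^*(\OP{2};\RR)$, which is generated in degree $8$ by $\spinform{9}$ and is therefore nonzero only in degrees $0,8,16$. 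Consequently $c_7=0$ and $c_3=0$, and setting $\tau_4(\psi):=c_5$ and $\tau_8(\psi):=c_1$ produces the claimed identity.

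The main obstacle is the representation-theoretic input in the second stage. A self-contained alternative is to compute $c_7=\sum_{\alpha<\beta}\psi_{\alpha\beta}\wedge\psi_{\alpha\beta}$ directly from the tables \eqref{28}--\eqref{8} by a sign-tracking exercise and verify its vanishing term by term; the vanishing of $c_3\in\Lambda^{12}$ then follows either by an analogous but heavier computation or, more cleanly, via Hodge duality together with the $\Spin{9}$-invariance already established.
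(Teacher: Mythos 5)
Your argument is correct, but it proceeds quite differently from the paper, which simply states that ``a computation gives'' the Proposition: the result there is obtained by brute-force evaluation of the characteristic polynomial from the explicit tables \eqref{28} and \eqref{8}, the only conceptual ingredient being the remark that $\tau_2(\psi)=\sum_{\alpha<\beta}\psi_{\alpha\beta}^2=0$. Your two-stage argument replaces this computation by structure: the parity of $\det(tI-\psi)$ follows from skew-symmetry over the commutative ring $\Lambda^{\mathrm{even}}\RR^{16}$, and the vanishing of the surviving coefficients in $\Lambda^4$ and $\Lambda^{12}$ follows from their $\Spin{9}$-invariance (conjugation-invariance of principal-minor sums under the $\SO{9}$ image of $\Spin{9}$ acting on the basis of $E^9$) together with $(\Lambda^\ast\Delta_9)^{\Spin{9}}\cong H^\ast(\OP{2};\RR)=\RR[u]/(u^3)$, $\deg u=8$. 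Both steps are sound, and your mechanism is confirmed by the paper's later $\Spin{10}$ computation on $\EIII$, where $\tau_2(\psi)=-3\omega^2\neq 0$ precisely because an invariant $4$-form ($\omega^2$) does exist there. What the invariant-theoretic route cannot deliver, and what the paper's computation is really after, is the explicit expression of $\tau_4(\psi)$ as an $8$-form, which is needed immediately afterwards to pin down the proportionality constant $360\,\spinform{9}=\tau_4(\psi)$; your proof establishes the shape of the characteristic polynomial but computes nothing. One small caveat: in your closing ``self-contained alternative,'' deducing $c_3=0$ from Hodge duality requires knowing that there are no invariant $4$-forms at all, not merely that the particular combination $c_7$ vanishes, so that fallback silently reuses the representation-theoretic input you were trying to avoid; the main argument, however, stands as written.
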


In particular, $\tau_2(\psi) = \sum_{\alpha < \beta} \psi_{\alpha \beta}^2 =0$, and the $\Spin{9}$-invariant $8$-form $\tau_4(\psi)$ has to be proportional to $\spinform{9}$. The proportionality factor, computed by looking at any of the terms of $\spinform{9}$ and $\tau_4(\psi)$ turns out to be $360$. 

This can be rephrased in the context of $\Spin{9}$ structures on Riemannian manifolds $M^{16}$ and gives the following two (essentially equivalent) algebraic expressions for the the 8-form, $\spinform{9}$:

\begin{Theorem}[\cite{LGMCEF}]
The $8$-form, $\spinform{9}$, associated with the $\Spin{9}$-structure $E^9\rightarrow M^{16}$ and defined by the integral \eqref{8form} coincides, up to a constant, with the global form
\[\label{cgm}
\Omega_{CGM}=\sum_{\alpha, \beta, \alpha', \beta'  = 1, \dots , 9} \psi_{\alpha , \beta} \wedge \psi_{\alpha , \beta'} \wedge \psi_{\alpha' , \beta} \wedge \psi_{\alpha' , \beta'} \enspace.
\] 
\end{Theorem}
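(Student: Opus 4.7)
The plan is to interpret $\Omega_{CGM}$ as the trace of the fourth power of the matrix $\psi=(\psi_{\alpha\beta})$ of K\"ahler 2-forms, powers being computed in the graded-commutative algebra of even-degree forms, and then reduce to the identity $\tau_4(\psi)=360\,\spinform{9}$ already recorded after Proposition~\ref{pr:charpoly}. The skew-symmetry $\psi_{\alpha\beta}=-\psi_{\beta\alpha}$, which follows from $\I_\alpha\I_\beta=-\I_\beta\I_\alpha$ and the symmetry of the $\I_\alpha$, makes this reduction possible.

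First I would regroup the four summations in the definition of $\Omega_{CGM}$ to expose the matrix-product structure:
$$\Omega_{CGM}=\sum_{\alpha,\alpha'}\Bigl(\sum_{\beta}\psi_{\alpha\beta}\wedge\psi_{\alpha'\beta}\Bigr)\wedge\Bigl(\sum_{\beta'}\psi_{\alpha\beta'}\wedge\psi_{\alpha'\beta'}\Bigr)=\sum_{\alpha,\alpha'}\bigl[(\psi\psi^{T})_{\alpha\alpha'}\bigr]^{\wedge 2}.$$
Because $\psi$ is skew-symmetric, $\psi\psi^{T}=-\psi^{\wedge 2}$ is a symmetric matrix of $4$-forms; the right-hand side above is therefore $\mathrm{tr}\bigl((\psi\psi^{T})^{\wedge 2}\bigr)=\mathrm{tr}(\psi^{\wedge 4})$.

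Next I would invoke Newton's identity
$$p_4 \;=\; \tau_1 p_3-\tau_2 p_2+\tau_3 p_1-4\tau_4,$$
where $p_k=\mathrm{tr}(\psi^{\wedge k})$ and the $\tau_k$ are the elementary symmetric polynomials read off $\det(tI-\psi)$. Proposition~\ref{pr:charpoly} states that this characteristic polynomial is $t^9+\tau_4(\psi)t^5+\tau_8(\psi)t$, so $\tau_1=\tau_2=\tau_3=0$, and Newton's identity collapses to $\mathrm{tr}(\psi^{\wedge 4})=-4\,\tau_4(\psi)$. Combined with $\tau_4(\psi)=360\,\spinform{9}$, this yields $\Omega_{CGM}=-1440\,\spinform{9}$, proving proportionality.

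The one delicate point is the use of Newton's identity with form-valued matrix entries. It is legitimate because 2-forms commute under $\wedge$, so the even-degree part of the exterior algebra is a genuine commutative ring in which the standard algebraic identities among power sums and elementary symmetric functions extend verbatim; in particular $\mathrm{tr}(\psi^{\wedge k})$ and the coefficients of $\det(tI-\psi)$ satisfy exactly the same relations as in the scalar case. Once this is granted the argument reduces to bookkeeping, and the numerical constant $-1440$ can be cross-checked against a single monomial, for instance $\shortform{12345678}$, using the explicit expressions \eqref{28} and \eqref{8} for the $\psi_{\alpha\beta}$.
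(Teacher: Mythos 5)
Your argument is correct, and it reaches the conclusion by a genuinely different route from the one the paper follows. The paper obtains the proportionality by combining two imported facts: the computation $\tau_4(\psi)=360\,\spinform{9}$ (Proposition~\ref{pr:charpoly} together with the theorem of Ref.~\cite{PaPSAC}) and the purely algebraic identity $F=2P^2-4Q$ of Proposition~\ref{teo:main}, which is quoted from Ref.~\cite{LGMEEC} and is there checked by direct expansion in the $36$ variables $x_{\alpha\beta}$. What you do instead is actually \emph{prove} that identity: your regrouping shows $\Omega_{CGM}=\mathrm{tr}\bigl((\psi\psi^T)^{\wedge 2}\bigr)=\mathrm{tr}(\psi^{\wedge 4})$ (correct, since $\psi\psi^T=-\psi^{\wedge 2}$ is symmetric with commuting $4$-form entries), and your appeal to Newton's identities is legitimate because they are universal integer-coefficient polynomial identities in the entries of a matrix over a commutative ring, which the even part of the exterior algebra is. For a skew-symmetric matrix one has $e_1=e_3=p_1=p_3=0$ and $p_2=-2e_2$, so $p_4=e_1p_3-e_2p_2+e_3p_1-4e_4$ collapses to $p_4=2e_2^2-4e_4$, which is exactly $F=2P^2-4Q$; evaluating at $\psi$ with $\tau_2(\psi)=0$ gives $\Omega_{CGM}=-4\tau_4(\psi)=-1440\,\spinform{9}$, matching the constants recorded in the paper. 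Two minor remarks: you write the Newton identity with $\tau_k$ in place of the elementary symmetric functions $e_k$, and with the convention $\det(tI-\psi)=\sum_k(-1)^ke_kt^{9-k}$ one has $\tau_1=-e_1$ and $\tau_3=-e_3$, a sign discrepancy that is harmless only because those terms vanish here; and your proof, like the paper's, ultimately reduces the link with the integral~\eqref{8form} to the identity $\tau_4(\psi)=360\,\spinform{9}$, so that step is still needed as an external input. What your approach buys is a conceptual, computation-free derivation of the algebraic identity that the paper treats as a black box.
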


\begin{Theorem}[\cite{PaPSAC}]
The $8$-form, $\spinform{9}$, associated with the $\Spin{9}$-structure $E^9\rightarrow M^{16}$ coincides, up to a~constant, with the coefficient 
\[ \label{pp}
\tau_4(\psi) =\sum_{1\leq \alpha_1 < \alpha_2 <  \alpha_3 <  \alpha_4 \leq 9} ( \psi_{\alpha_1 \alpha_2} \wedge \psi_{\alpha_3 \alpha_4} - \psi_{\alpha_1 \alpha_3} \wedge \psi_{\alpha_2  \alpha_4} + \psi_{\alpha_1 \alpha_4} \wedge \psi_{\alpha_2  \alpha_3} )^2\enspace
\] 
in the characteristic polynomial
\[
\det(tI-\psi)=t^9+\tau_4(\psi)t^5+\tau_8(\psi)t\enspace,
\]
where $\psi=(\psi_{\alpha \beta})_{\alpha,\beta=1,\dots,9}$ is any skew-symmetric matrix of local associated K\"ahler 2-forms ($M$). The~proportionality factor is given by
\[
360\spinform{9}=\tau_4(\psi)\enspace.
\]
\end{Theorem}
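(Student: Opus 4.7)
The strategy is fourfold: (i) identify $\tau_4(\psi)$ as an explicit sum of squares of $4\times 4$ Pfaffians; (ii) verify that it is a globally defined, $\Spin{9}$-invariant $8$-form on $M^{16}$; (iii) invoke the one-dimensionality of $\Spin{9}$-invariants in $\Lambda^8(\RR^{16})$ to deduce that it is proportional to $\spinform{9}$; (iv) fix the constant by inspection of a single monomial.

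For step (i), I would invoke the classical identity that for a skew-symmetric $n\times n$ matrix $A$ with entries in a commutative ring, every principal minor of odd size vanishes and every principal minor of even size $2k$ equals the square of the Pfaffian of the corresponding sub-block. Since $2$-forms commute under $\wedge$, this applies to $\psi$: writing $\det(tI-\psi)=\sum_{k}(-1)^{k}e_{k}(\psi)\,t^{n-k}$ with $e_{k}$ the sum of $k\times k$ principal minors, all odd $e_{k}$ vanish and
\[
\det(tI-\psi) \;=\; t^{9} + \tau_{2}(\psi)\,t^{7} + \tau_{4}(\psi)\,t^{5} + \tau_{6}(\psi)\,t^{3} + \tau_{8}(\psi)\,t,
\]
with
\[
\tau_{4}(\psi)=\sum_{\alpha_{1}<\alpha_{2}<\alpha_{3}<\alpha_{4}}\bigl(\psi_{\alpha_{1}\alpha_{2}}\wedge\psi_{\alpha_{3}\alpha_{4}} - \psi_{\alpha_{1}\alpha_{3}}\wedge\psi_{\alpha_{2}\alpha_{4}} + \psi_{\alpha_{1}\alpha_{4}}\wedge\psi_{\alpha_{2}\alpha_{3}}\bigr)^{2},
\]
since the Pfaffian of a $4\times 4$ skew-symmetric matrix is $a_{12}a_{34}-a_{13}a_{24}+a_{14}a_{23}$. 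The vanishing of $\tau_{2}$ and $\tau_{6}$ is exactly the content of Proposition \ref{pr:charpoly}; granting it, the characteristic polynomial reduces to the three-term form stated.

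For step (ii), a change of local trivialization of $E^{9}$ sends $\I_{\alpha}\mapsto\sum_{\beta}A_{\alpha\beta}\I_{\beta}$ with $A\in\SO{9}$, hence on pairwise compositions $\J_{\alpha\beta}=\I_{\alpha}\I_{\beta}$ the induced transformation is $\psi\mapsto A\psi A^{T}$. Coefficients of the characteristic polynomial are conjugation-invariant, so $\tau_{4}(\psi)$ is a globally defined, $\Spin{9}$-invariant $8$-form on $M^{16}$. By representation theory (see \cite{FriWSS}), the space of $\Spin{9}$-invariants in $\Lambda^{8}(\RR^{16})$ is one-dimensional, spanned by $\spinform{9}$; therefore $\tau_{4}(\psi)=c\,\spinform{9}$ for some constant $c\in\RR$.

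Finally, the constant $c$ is determined by matching one monomial. I would pick a convenient index such as $\shortform{12345678}$, read off its coefficient in $\spinform{9}$ from the normalization in \eqref{8form} (designed so that the coefficients are coprime integers), and compare with the coefficient of the same monomial in the expansion of $\tau_{4}(\psi)$ obtained from \eqref{28}--\eqref{8}. The main obstacle is computational bulk: $\tau_{4}(\psi)$ is a sum of $\binom{9}{4}=126$ squared trinomials in $2$-forms that themselves have up to sixteen terms each, so many contributions to a single monomial arise and the bookkeeping is formidable. I would therefore carry out the expansion symbolically with Mathematica, obtaining $c=360$, and cross-check by extracting a second monomial of a structurally different type (for instance one mixing primed and unprimed coordinates) to confirm the outcome.
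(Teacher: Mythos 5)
Your proposal is correct and follows essentially the same route as the paper: identify $\tau_4(\psi)$ via the Pfaffian expansion of the characteristic polynomial of a skew-symmetric matrix, observe its $\Spin{9}$-invariance under local change of trivialization, invoke the one-dimensionality of $\Spin{9}$-invariant 8-forms on $\RR^{16}$ to get proportionality to $\spinform{9}$, and fix the constant by comparing a monomial (the paper too relies on explicit computation with the K\"ahler forms of \eqref{28}--\eqref{8}). The only cosmetic difference is that you state the general Pfaffian fact for skew-symmetric matrices up front, whereas the paper folds this into the direct computation of Proposition~\ref{pr:charpoly}.
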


These two expressions of $\spinform{9}$ have been shown to be proportional according to the following algebraic relation.

\begin{Proposition}[\cite{LGMEEC}]\label{teo:main}
Let $\RR[x_{12}, \dots , x_{89}]$ be the polynomial ring in the 36 variables ($x_{12}, \dots , x_{89}$), and let $x$ be the skew-symmetric matrix whose upper diagonal entries are $x_{12}, \dots , x_{89}$. Among the homogeneous polynomials
\begin{equation*}
\begin{split} 
F=\sum_{\alpha, \beta, \alpha', \beta'  = 1, \dots , 9} x_{\alpha , \beta}  \; x_{\alpha , \beta'} \; x_{\alpha' , \beta}\;  x_{\alpha' , \beta'} \enspace , \qquad  \qquad P=\sum_{\alpha < \beta} x_{\alpha , \beta}^2   \enspace , \qquad  \\
Q=\tau_4(x) = \sum_{1\leq \alpha_1 < \alpha_2 <  \alpha_3 <  \alpha_4 \leq 9} ( x_{\alpha_1 \alpha_2} x_{\alpha_3 \alpha_4} - x_{\alpha_1 \alpha_3} x_{\alpha_2  \alpha_4} + x_{\alpha_1 \alpha_4} x_{\alpha_2  \alpha_3} )^2,
\end{split}
\end{equation*}
the following relation holds: $F=2P^2-4Q$. Thus, since $P(\psi)=0$,
$$\Omega_{CGM} =-4 \tau_4(\psi).$$
\end{Proposition}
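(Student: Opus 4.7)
The plan is to interpret the three homogeneous polynomials $F$, $P$, $Q$ as invariants of the generic $9\times 9$ skew-symmetric matrix $X=(x_{\alpha\beta})$, and then to deduce the identity from Newton's power-sum identities, exploiting the vanishing of the odd elementary symmetric functions of a skew matrix. The very first step is to recognize $F$ as a trace: rearranging the quadruple sum,
\[
F \;=\; \sum_{\alpha,\alpha'}\Bigl(\sum_{\beta} x_{\alpha\beta}\,x_{\alpha'\beta}\Bigr)^{\!2} \;=\; \sum_{\alpha,\alpha'}\bigl(XX^{T}\bigr)_{\alpha\alpha'}^{2},
\]
which is the squared Frobenius norm of the symmetric matrix $XX^{T}$. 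Since $X^{T}=-X$, one has $XX^{T}=-X^{2}$, and therefore $F=\mathrm{tr}\bigl((XX^{T})^{2}\bigr)=\mathrm{tr}(X^{4})$.

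Next I would identify $P$ and $Q$ as the elementary symmetric functions $e_{2}(X)$ and $e_{4}(X)$. Expanding $\det(tI-X)=\sum_{k}(-1)^{k}e_{k}(X)\,t^{9-k}$ in terms of principal minors gives $e_{k}(X)=\sum_{|I|=k}\det(X_{I})$. For skew-symmetric $X$ these minors vanish when $k$ is odd, and for $k=2m$ even they equal $\mathrm{Pf}(X_{I})^{2}$. Since the Pfaffian of a $4\times 4$ skew matrix indexed by $i_{1}<i_{2}<i_{3}<i_{4}$ is precisely $x_{i_{1}i_{2}}x_{i_{3}i_{4}}-x_{i_{1}i_{3}}x_{i_{2}i_{4}}+x_{i_{1}i_{4}}x_{i_{2}i_{3}}$, it follows that $Q=\tau_{4}(X)$, and similarly $P=\tau_{2}(X)$, while crucially $e_{1}(X)=e_{3}(X)=0$.

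The identity is now immediate from Newton's recursion $p_{k}=e_{1}p_{k-1}-e_{2}p_{k-2}+\cdots+(-1)^{k-1}k\,e_{k}$ with $p_{k}=\mathrm{tr}(X^{k})$: the vanishing of $e_{1}$ and $e_{3}$ reduces the $k=2$ and $k=4$ cases to $p_{2}=-2e_{2}$ and $p_{4}=-e_{2}p_{2}-4e_{4}=2e_{2}^{2}-4e_{4}$. Substituting $p_{4}=F$, $e_{2}=P$, $e_{4}=Q$ yields $F=2P^{2}-4Q$. Specializing this polynomial identity to $x=\psi$ (the $2$-forms $\psi_{\alpha\beta}$ commute under wedge product, so the identity transfers verbatim with commutative products read as wedge products) and using $P(\psi)=\tau_{2}(\psi)=0$ from Proposition~\ref{pr:charpoly} produces the stated conclusion $\Omega_{CGM}=-4\,\tau_{4}(\psi)$.

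There is no conceptual obstacle; the delicate point is merely to track signs consistently in the three places where they enter: the relation $XX^{T}=-X^{2}$, the alternating principal-minor expansion of $\det(tI-X)$, and Newton's formula with vanishing odd terms. A brute-force monomial expansion of $F$, $P^{2}$ and $Q$ would in principle verify the identity combinatorially, but is far less informative than the Pfaffian/trace viewpoint above.
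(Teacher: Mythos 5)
Your argument is correct: the identifications $F=\mathrm{tr}(X^4)$ (via $F=\sum_{\alpha,\alpha'}(XX^T)^2_{\alpha\alpha'}$ and $XX^T=-X^2$), $P=e_2(X)$, $Q=e_4(X)$ (principal $4\times4$ minors of a skew matrix being squared Pfaffians), together with Newton's identities and $e_1=e_3=0$, do yield $p_4=2e_2^2-4e_4$, i.e.\ $F=2P^2-4Q$; the transfer to the $\psi_{\alpha\beta}$ is legitimate because even-degree forms commute under wedge. The paper itself gives no proof of this proposition, delegating it to the cited reference \cite{LGMEEC}, so your trace/Pfaffian/Newton argument supplies the missing justification and is the natural one; the signs you flag as delicate all check out (for instance $2P^2-4Q$ and $\mathrm{tr}(X^4)$ agree on the test matrix with only $x_{12}=a$, $x_{34}=b$ nonzero, both giving $2a^4+2b^4$).
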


\begin{Corollary}
The K\"ahler forms of the $\Spin{9}$-structure of $\OO^2$ allow the integral \eqref{8form} to be computed as
\[
\int_{\OP{1}}p_l^*\nu_l\,dl=\frac{\pi^4}{110880\cdot 360}\tau_4(\psi).
\]
\end{Corollary}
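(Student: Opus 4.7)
The plan is to combine the definition of $\spinform{9}$ as an integral over the octonionic projective line with the algebraic characterisation of $\spinform{9}$ as the invariant $\tau_4(\psi)/360$, so the Corollary is essentially a rearrangement of formulas already established in the excerpt.

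First I would rewrite the defining equation \eqref{8form}, namely
\[
\spinform{9}=\frac{110880}{\pi^4}\int_{\OP{1}}p_l^*\nu_l\,dl,
\]
by solving for the integral, getting
\[
\int_{\OP{1}}p_l^*\nu_l\,dl=\frac{\pi^4}{110880}\,\spinform{9}.
\]
This is just an algebraic manipulation; no work is required here beyond isolating the integral, and the normalising factor $110880/\pi^4$ was precisely the one chosen by Berger in \cite{BerCCP} (adjusted so that $\spinform{9}$ has integer coefficients).

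Next I would invoke the second theorem attributed to \cite{PaPSAC}, which asserts $360\,\spinform{9}=\tau_4(\psi)$ for $\psi=(\psi_{\alpha\beta})$ the skew-symmetric matrix of K\"ahler 2-forms listed explicitly in \eqref{28} and \eqref{8}. Hence $\spinform{9}=\tau_4(\psi)/360$, and substituting into the expression for the integral yields
\[
\int_{\OP{1}}p_l^*\nu_l\,dl=\frac{\pi^4}{110880}\cdot\frac{\tau_4(\psi)}{360}=\frac{\pi^4}{110880\cdot 360}\,\tau_4(\psi),
\]
which is exactly the claimed identity.

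There is no real obstacle: both pieces used, the integral definition of $\spinform{9}$ and the polynomial identity $360\,\spinform{9}=\tau_4(\psi)$, have already been established in the excerpt, and the Corollary is their immediate combination. The only point to remark on is that this gives a concrete recipe to evaluate the Berger integral on the projective line $\OP{1}$: instead of integrating over lines the pulled-back volume forms $p_l^*\nu_l$, one can compute $\tau_4(\psi)$ algebraically from the 36 explicit K\"ahler forms $\psi_{\alpha\beta}$ and divide by $110880\cdot 360$.
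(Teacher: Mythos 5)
Your argument is correct and coincides with what the paper intends: the Corollary is stated as an immediate consequence of the defining integral \eqref{8form} and the proportionality $360\,\spinform{9}=\tau_4(\psi)$ from the theorem of \cite{PaPSAC}, exactly the two ingredients you combine. Nothing further is needed.
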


When $\Spin{9}$ is the holonomy group of the Riemannian manifold ($M^{16}$), the Levi--Civita connection ($\nabla$) preserves the $E^9$ vector bundle, and the local sections $\I_1,\dots,\I_9$ of $E^9$ induce the K\"ahler forms $\psi_{\alpha\beta}$ on $M$ as the local curvature forms.

\begin{Corollary}\label{hol} 
Let $M^{16}$ be a compact Riemannian manifold with holonomy $\Spin{9}$, i.e.,\ $M^{16}$ is either isometric to the Cayley projective plane ($\OP{2}$) or to any compact quotient of the Cayley hyperbolic plane ($\OH{2}$). Then, its~Pontrjagin classes are given by
\[
p_1(M)=0,\quad p_2(M)=-\frac{45}{2\pi^4}[\spinform{9}],\quad p_3(M)=0,\quad p_4(M)=-\frac{13}{256\pi^8}[\tau_8(\psi)].
\]
\end{Corollary}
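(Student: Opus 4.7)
The plan is to apply Chern--Weil theory to the Levi-Civita connection. Since the holonomy is $\Spin{9}$, the curvature operator $R$ takes values in $\liespin{9}\subset\lieso{16}$. Because $\OP{2}=F_4/\Spin{9}$ and its noncompact dual $\OH{2}$ are symmetric spaces, the curvature is parallel and $\Spin{9}$-invariant; Schur's Lemma applied to the splitting $\Lambda^2\RR^{16}=\Lambda^2_{36}\oplus\Lambda^2_{84}$ of \eqref{decomposition}, together with the $\Spin{9}$-equivariant isomorphism $\Lambda^2_{36}\cong\liespin{9}$ induced by $\psi_{\alpha\beta}\leftrightarrow\J_{\alpha\beta}$, forces the canonical form
\[
R=\sum_{\alpha<\beta}\psi_{\alpha\beta}\otimes\J_{\alpha\beta},
\]
up to an overall sign (opposite in the compact and noncompact cases) which is irrelevant for the even-degree invariants we compute. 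This is precisely the content of the assertion that the $\psi_{\alpha\beta}$ are the local curvature forms.

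Next I would use the splitting principle via the double cover $\Spin{9}\to\SO{9}$. Writing the formal eigenvalues of $\psi$ in the standard $9$-dimensional representation as $0,\pm ie_1,\dots,\pm ie_4$, the $16$ eigenvalues of $R$ in the spin representation $\Delta_9$ are $\tfrac{i}{2}(\pm e_1\pm e_2\pm e_3\pm e_4)$. Setting $s_k:=e_k(e_1^2,\dots,e_4^2)$, Proposition~\ref{pr:charpoly} identifies $s_k$ with $\tau_{2k}(\psi)$, so $s_1=s_3=0$ while $s_2=\tau_4(\psi)$ and $s_4=\tau_8(\psi)$. The Pontrjagin classes are then
\[
p_k(M)=\frac{1}{(2\pi)^{2k}}\,e_k\bigl(y_\epsilon^2\bigr),\qquad y_\epsilon=\tfrac{1}{2}\sum_{j=1}^{4}\epsilon_je_j,
\]
with $\epsilon$ ranging over one representative of each $\pm$-pair in $\{\pm 1\}^4$. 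After summing over sign choices, only monomials in $e_1,\dots,e_4$ in which every index occurs with even multiplicity survive, reducing each $e_k(y_\epsilon^2)$ to a polynomial in $s_1,s_2,s_3,s_4$. The $k=1$ and $k=3$ cases vanish identically once $s_1=s_3=0$ is imposed, proving $p_1=p_3=0$.

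The non-vanishing classes are handled by the same expansion. For $p_2$ one finds $e_2(y_\epsilon^2)=-s_2$, so dividing by $(2\pi)^4=16\pi^4$ and invoking the preceding Theorem ($\tau_4(\psi)=360\,\spinform{9}$) yields $p_2(M)=-\tfrac{45}{2\pi^4}[\spinform{9}]$. The analogous computation produces
\[
e_4(y_\epsilon^2)=\tfrac{3}{8}\,s_2^2-\tfrac{17}{2}\,s_4,
\]
so to match the stated formula for $p_4$ one still needs the pointwise identity $s_2^2=-12\,s_4$. This is the step I expect to be the main obstacle: both $s_2^2$ and $s_4$ are $\Spin{9}$-invariant $16$-forms on $\RR^{16}$, hence automatically proportional (the top exterior power is one-dimensional), but extracting the specific constant $-12$ requires a direct computation in a single $\Spin{9}$-adapted frame using the explicit K\"ahler forms \eqref{28}--\eqref{8}, in the same algebraic spirit as Proposition~\ref{teo:main}. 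Once the identity is in hand, substitution gives $e_4(y_\epsilon^2)=-13\,s_4=-13\,\tau_8(\psi)$, and division by $(2\pi)^8=256\pi^8$ produces the claimed $p_4(M)=-\tfrac{13}{256\pi^8}[\tau_8(\psi)]$.
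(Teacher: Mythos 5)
Your strategy is essentially the paper's: Chern--Weil applied to the auxiliary rank-$9$ bundle $E^9$ (whose curvature forms are the $\psi_{\alpha\beta}$, so that $p_k(E)=\tfrac{1}{(2\pi)^{2k}}\tau_{2k}(\psi)$), combined with the universal relation between the Pontrjagin classes of the spin-representation bundle $TM=\mathcal R\times_{\Spin{9}}\Delta_9$ and of the vector-representation bundle $E^9$. The paper simply cites these relations from Friedrich; you re-derive them by the splitting principle, and your weight bookkeeping is correct: I checked that $e_2(y_\epsilon^2)=\tfrac74 s_1^2-s_2$ and, once $s_1=s_3=0$ is imposed, $e_4(y_\epsilon^2)=\tfrac38 s_2^2-\tfrac{17}{2}s_4$, exactly as you claim. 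This settles $p_1=p_3=0$ and $p_2(M)=-\tfrac{1}{16\pi^4}\tau_4(\psi)=-\tfrac{45}{2\pi^4}[\spinform{9}]$ completely.

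The gap is the one you flag yourself: the constant in the pointwise proportionality $\tau_4(\psi)^2=c\,\tau_8(\psi)$. You correctly observe that proportionality is automatic (both are $\Spin{9}$-invariant forms of top degree on $\RR^{16}$), but the value $c=-12$ is asserted, not computed, and without it your route terminates at $p_4(M)=\tfrac38 p_2(E)^2-\tfrac{17}{2}p_4(E)$ rather than at $-13\,p_4(E)$. This is the one point where your argument genuinely diverges from the paper's: the relation quoted there from Friedrich, $p_4(M)=\tfrac{1}{128}\bigl(35p_1^4(E)-120p_1^2(E)p_2(E)+400p_1(E)p_3(E)-1664p_4(E)\bigr)$, contains no $p_2(E)^2$ term, so once $p_1(E)=p_3(E)=0$ the paper reads off $p_4(M)=-13p_4(E)$ with no further pointwise identity needed. (Be aware that this quoted relation is not the universal splitting-principle identity --- test it on formal roots $e_1=e_2=1$, $e_3=e_4=0$, where it returns $\tfrac58$ instead of $1$; the two expressions differ by $\tfrac38\bigl(p_2(E)^2-3p_1(E)p_3(E)+12p_4(E)\bigr)$, which vanishes here precisely because of the identity $\tau_4(\psi)^2=-12\,\tau_8(\psi)$ you need. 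So your version of the relation is the correct universal one, but that makes the constant $-12$ an unavoidable ingredient of your proof.) To close the gap you must actually evaluate $\tau_4(\psi)^2$ and $\tau_8(\psi)$ on a volume element using the explicit forms \eqref{28}--\eqref{8}; deducing $c=-12$ instead from the classical values $p_2(\OP{2})=6u$, $p_4(\OP{2})=39u^2$ would make the $p_4$ statement circular.
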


\begin{proof}
The Pontrjagin classes of the $E^9\rightarrow M$ vector bundle are given by
\[
p_1(E)=0,\quad 16\pi^4 p_2(E)=\tau_4(\psi)=360[\spinform{9}],\quad p_3(E)=0,\quad 256\pi^8 p_4(E)=[\tau_8(\psi)].
\]

For a compact $M$ with a $\Spin{9}$-structure, the following relations hold ~(\cite{FriWSS} (p. 138)):
\begin{equation}
\begin{split}
p_1(M)&=2p_1(E), \\
p_2(M)&=\frac{7}{4}p_1^2(E)-p_2(E),\\
p_3(M)&=\frac{1}{8}\left(7p_1^3(E)-12p_1(E)p_2(E)+16p_3(E)\right),\\
p_4(M)&=\frac{1}{128}\left(35p_1^4(E)-120p_1^2(E)p_2(E)+400p_1(E)p_3(E)-1664p_4(E)\right).
\end{split}
\end{equation}

Thus, $\tau_2(\psi)=\tau_6(\psi)=0$ gives $p_1(E)=p_3(E)=0$, so that $p_1(M)=p_3(M)=0$, \mbox{$p_2(M)=-p_2(E)$}, and $p_4(M)=-13p_4(E)$.
\end{proof}

The Pontrjagin classes of $\OP{2}$ are known to be $p_2(\OP{2})=6u$ and $p_4(\OP{2})=39u^2$, where $u$ is the canonical generator of $H^8(\OP{2};\ZZ)$, and Corollary~\ref{hol} gives the representative forms:
\[
u=[-\frac{15}{4\pi^4}\spinform{9}]=[-\frac{1}{96\pi^4}\tau_4(\psi)],\qquad u^2=[-\frac{1}{768\pi^8}\tau_8(\psi)].
\]

\begin{Remark} Very recently, an alternative way of writing the 8-form $\spinform{9}$ in $\RR^{16}$ was proposed by J.~Kotrbat\'y~\cite{KotOVF}. This is in terms of the differentials of the octonionic coordinates $x,y \in \OO^2$. If 
\begin{equation*}
\begin{array}{rll}
& dx= dx_1 +idx_2+ jdx_3 + \dots +hdx_8, \quad &\overline{dx}= dx_1 -idx_2- jdx_3 - \dots -hdx_8,\\
& dy= dy_1 +idy_2+ jdy_3 + \dots +hdy_8, \quad &\overline{dy} = dy_1 -idy_2- jdy_3 , \dots -hdy_8,
\end{array}
\end{equation*}
consider, formally, the ``octonionic 4-forms''
\begin{equation*}
\begin{array}{rll}
&\Psi_{40}= ((\overline{dx} \wedge dx)\wedge \overline{dx})\wedge dx, \quad &\Psi_{31}= ((\overline{dy} \wedge dx)\wedge \overline{dx})\wedge dx \\
&\Psi_{13}= ((\overline{dx} \wedge dy)\wedge \overline{dy})\wedge dy, \quad  &\Psi_{04}= ((\overline{dy} \wedge dy)\wedge \overline{dy})\wedge dy. 
\end{array}
\end{equation*}

Then, by defining their conjugates through $$\overline{\alpha \wedge \beta}= (-1)^{kl} \; \overline{\beta} \wedge \overline{\alpha},$$ for $\alpha \in \Lambda^k, \beta \in \Lambda^l,$
Kotrbat\'y shows that the real 8-form 
\[
\Psi_8 = \Psi_{40} \wedge \overline{\Psi_{40}} +4 \Psi_{31} \wedge \overline{\Psi_{31}} -5 (\Psi_{31} \wedge \Psi_{13} + \overline{\Psi_{13}} \wedge \overline{\Psi_{31}}) +4 \Psi_{13} \wedge \overline{\Psi_{13}} +\Psi_{04} \wedge \overline{\Psi_{04}}
\]
gives the proportionality relation $\spinform{9}=-\frac{1}{4\cdot 6!} \Psi_8$ and recovers the table of $702$ non-zero monomials of $\spinform{9}$ in $\RR^{16}$ from this.
\end{Remark}

\section{The analogy with $\Sp{2} \cdot \Sp{1}$}\label{quaternionic}

In the previous Section we saw that the matrices $\I_1, \dots , \I_9$ are the starting point for the construction of the canonical 8-form $\spinform{9}$. Of course, $\I_1, \dots , \I_9$ are the octonionic analogues of the classical Pauli matrices
\begin{equation}\label{eq:Ipauli}
\I_1=\left(
\begin{array}{rr}
0 & 1 \\
1 & 0
\end{array}\right),\qquad
\I_2=\left(
\begin{array}{rr}
0 & -i \\
i & 0
\end{array}\right),\qquad
\I_3=\left(
\begin{array}{rr}
1 & 0 \\
0 & -1
\end{array}\right),
\end{equation}
which are defined with just the unit imaginary $i \in \CC$, belonging to $\U{2}$. Their compositions ($\J_{\alpha\beta}~=~\I_\alpha\I_\beta$), for $\alpha<\beta$ act
on $\HH\cong\CC^2$ as multiplications on the right by unit quaternions: $\J_{12}=R_i, \J_{13}= R_j, \J_{23}=R_k$. 

Similarly, the quaternionic analogues of the Pauli matrices are the $8 \times 8$ real matrices:

\begin{equation}\label{eq:IH}
\begin{aligned}
\I_1=&\left(
\begin{array}{c|c}
0 & \Id \\ \hline
\Id & 0
\end{array}
\right),\qquad
\I_2=\left(
\begin{array}{c|c}
0 & -R_i \\ \hline
R_i & 0
\end{array}
\right),\qquad 
\I_3=\left(
\begin{array}{c|c}
0 & -R_j \\ \hline
R_j & 0
\end{array}
\right),
\\
&\I_4=\left(
\begin{array}{c|c}
0 & -R_k \\ \hline
R_k & 0
\end{array}
\right), \qquad 
\I_5=\left(
\begin{array}{c|c}
\Id & 0 \\ \hline
0 & -\Id
\end{array}
\right),
\end{aligned}
\end{equation}
where $
R_i, 
R_j, and
R_k$  are the multiplication on the right on $\HH$ by $i,j,k$.

The ten compositions $\J_{\alpha\beta}\ug\I_\alpha\I_\beta$ $(\alpha<\beta)$ of these latter matrices are a basis of the term $\liesp{2}$ in the decomposition
\[
\Lambda^2\RR^8\cong\lieso{8}=\liesp{1}\oplus\liesp{2}\oplus\Lambda^2_{15},
\]
where $\liesp{2}\cong\lieso{5}$. Their K\"ahler forms $\theta_{\alpha\beta}$ read
{\small
\begin{equation*}
\begin{aligned}
\theta_{12} &= -\shortform{12}+\shortform{34}+\shortform{56}-\shortform{78}, &
\theta_{13} &= -\shortform{13}-\shortform{24}+\shortform{57}+\shortform{68}, &
\theta_{14} &= -\shortform{14}+\shortform{23}+\shortform{58}-\shortform{67},\\
\theta_{23} &= -\shortform{14}+\shortform{23}-\shortform{58}+\shortform{67}, &
\theta_{24} &= \shortform{13}+\shortform{24}+\shortform{57}+\shortform{68}, &
\theta_{34} &= -\shortform{12}+\shortform{34}-\shortform{56}+\shortform{78},\\
\theta_{15} &= -\shortform{15}-\shortform{26}-\shortform{37}-\shortform{48}, &
\theta_{25} &= -\shortform{16}+\shortform{25}+\shortform{38}-\shortform{47}, &
\theta_{35} &= -\shortform{17}-\shortform{28}+\shortform{35}+\shortform{46}, &
\theta_{45} &= -\shortform{18}+\shortform{27}-\shortform{36}+\shortform{45}.
\end{aligned}
\end{equation*}
}

If $\theta\ug(\theta_{\alpha\beta})$, it follows that
\begin{equation}\label{Theta}
\tau_2(\theta) =\sum_{\alpha < \beta} \theta^2_{\alpha \beta} = -12\shortform{1234}-4\shortform{1256}-4\shortform{1357}+4\shortform{1368}-4\shortform{1278}-4\shortform{1467}-4\shortform{1458}+\star =-2 \Omega_L 
\end{equation}
where $\star$ denotes the Hodge star of what appears before, and $$\Omega_L = \omega^2_{L_i}+\omega^2_{L_j}+\omega^2_{L_k}$$ is the left quaternionic 4-form on $\HH^2 = \RR^8$.

On the other hand, the matrices 
$
B=\left(
\begin{array}{c|c}
B' & B'' \\ 
\hline 
B''' & B'''' 
\end{array}
\right)\in\SO{8}
$ which commute with each of the involutions $\I_1,\dots,\I_5$ are the ones satisfying $B''=B'''=0$ and $B'=B''''\in\Sp{1}\subset\SO{4}$. Thus, the~subgroup preserving each of the $\I_1,\dots,\I_5$ is the diagonal $\Sp{1}_\Delta \subset\SO{8}$. Thus, the subgroup of $\SO{8}$ preserving the vector bundle $E^5$ consists of matrices ($B$) satisfying $$B\I_\alpha=\I'_\alpha B,$$ with $\I_1,\dots,\I_5$ and $\I'_1,\dots,\I'_5$ bases of $E^5$ related by a $\SO{5}$ matrix. This group is thus recognized to be $\Sp{1}\cdot\Sp{2}$, and the following proposition applies. 

\begin{Proposition}
Let $M^8$ be an 8-dimensional oriented Riemannian manifold, and let $E^5$ be a vector subbundle of $ \End{TM}$, locally spanned by self dual anti-commuting involutions ($\I_1,\dots,\I_5$) and related, on open sets covering $M$, by functions giving $\SO{5}$ matrices. Then, the datum of such an $E^5$ is equivalent to an (left) almost quaternion Hermitian structure on $M$, i.e.,\ to a $\Sp{2}\cdot\Sp{1}$-structure on $M$.
\end{Proposition}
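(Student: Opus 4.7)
The plan is to follow the template already carried out in the discussion immediately preceding the statement: identify the setwise stabilizer of the model $E^5_0 \subset \End{\RR^8}$ inside $\SO{8}$, then convert the existence of a global $E^5$ on $M^8$ into a reduction of the principal bundle of orthonormal frames.

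First, I would work on the model $\RR^8 \cong \HH^2$ with the explicit generators $\I_1,\dots,\I_5$ of \eqref{eq:IH}. The author has already shown that a matrix $B \in \SO{8}$ commuting with each $\I_\alpha$ is block-diagonal with the same $\Sp{1}$-entry on both diagonal blocks, so the pointwise stabilizer of the frame $\{\I_1,\dots,\I_5\}$ is the diagonal $\Sp{1}_\Delta$. The admissible changes of frame inside $E^5_0$ are by $\SO{5}$: the relations analogous to \eqref{top} (self-adjointness, square $\Id$, anti-commutativity) force any linear map between two admissible spanning sets to be orthogonal, and orientation fixes the sign. Hence the full setwise stabilizer of $E^5_0$ consists of those $B \in \SO{8}$ satisfying $B \I_\alpha = \I'_\alpha B$, with $\I'_\alpha = \sum_\beta A_{\beta\alpha} \I_\beta$ for some $A \in \SO{5}$. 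The left multiplication of $\Sp{2}$ on $\HH^2$ realizes precisely this $\SO{5}$-action on $E^5_0$ by conjugation, via the exceptional isomorphism $\Sp{2}/\ZZ_2 \cong \SO{5}$, and commutes with the right $\Sp{1}_\Delta$-action. This identifies the full stabilizer with $\Sp{2}\cdot\Sp{1} \subset \SO{8}$.

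Second, I would transfer this to the manifold. Let $\mathcal F(M)$ be the principal $\SO{8}$-bundle of oriented orthonormal frames of $M^8$. Given $E^5 \subset \End{TM}$ with the stated local structure, define $\mathcal R \subset \mathcal F(M)$ to be the subset of frames $u : \RR^8 \to T_xM$ for which the induced isomorphism on endomorphism bundles sends $E^5_0$ to $E^5_x$. By the model analysis, the fibers of $\mathcal R \to M$ are principal homogeneous spaces for $\Sp{2}\cdot\Sp{1}$, and $\mathcal R$ is non-empty over every open set trivializing $E^5$ (any local spanning family of $E^5$ satisfying the involution relations matches the standard one up to an $\SO{5}$-change). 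Conversely, an $\Sp{2}\cdot\Sp{1}$-reduction $\mathcal R$ produces the bundle $E^5_x \ug u_*E^5_0$ for any $u \in \mathcal R_x$, which is well defined because the structure group preserves $E^5_0$ setwise. This is the standard equivalence between sub-$G$-bundles and $G$-invariant tensors, and it concludes the proof.

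The main obstacle is the stabilizer computation itself, namely showing that $\Sp{2}\cdot\Sp{1}$ equals, and not merely embeds in, the setwise stabilizer of $E^5_0$. The easy inclusion $\Sp{2}\cdot\Sp{1} \subset \mathrm{Stab}(E^5_0)$ follows directly from the $\HH$-bimodule structure of $\HH^2$. For the reverse inclusion, a dimension count closes the gap once both groups are known to be connected: $\dim \SO{8} = 28$, $\dim(\Sp{2}\cdot\Sp{1}) = 10 + 3 = 13$, and the $\SO{8}$-orbit of $E^5_0$ inside the Grassmannian of self-adjoint anti-commuting five-planes in $\End{\RR^8}$ has the matching codimension $28 - 13 = 15$. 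Alternatively, one can argue by examining how any $B \in \mathrm{Stab}(E^5_0)$ must act on the $\pm 1$-eigenspaces of $\I_5$ and intertwine the four remaining involutions, which forces precisely the block structure characterizing $\Sp{2}\cdot\Sp{1}$.
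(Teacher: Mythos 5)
Your argument is essentially the paper's: the pointwise stabilizer of $\I_1,\dots,\I_5$ in $\SO{8}$ is the diagonal $\Sp{1}_\Delta$, the setwise stabilizer of $E^5_0$ is characterized by $B\I_\alpha = \I'_\alpha B$ with $\I'_\alpha$ an $\SO{5}$-change of frame, and this group is recognized as $\Sp{2}\cdot\Sp{1}$; the passage to a manifold via a frame-bundle reduction is the same equivalence already encoded in Definition~\ref{de:spin9structure} and summarized in Corollary~\ref{involutions}. The one caveat is that your dimension count for the reverse inclusion $\mathrm{Stab}(E^5_0)\subset\Sp{2}\cdot\Sp{1}$ is circular as stated (the orbit dimension $28-13=15$ already presupposes the stabilizer has dimension $13$), but the alternative eigenspace argument you sketch would close that gap, and the paper itself is equally terse there, simply asserting that the group ``is thus recognized to be $\Sp{1}\cdot\Sp{2}$.''
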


In the above discussions, we looked at the standard  $\U{2}$ and $\Sp{1}\cdot\Sp{2}$-structures on $\RR^4$ and $\RR^8$, through the decompositions of the $2$-forms
\[
\lieso{4}=\lieu{1}\oplus\lieso{3}\oplus\Lambda^2_2\enspace,\qquad\lieso{8}=\liesp{1}\oplus\liesp{2}\oplus\Lambda^2_{15},
\]
and the orthonormal frames in the $\lieso{3}$ and $\liesp{2}$ components, respectively. The last components, $\Lambda^2_2$~and $\Lambda^2_{15}$, describe all of the similar structures on the linear spaces $\RR^4$ and $\RR^8$. Thus, such~decompositions give rise to the $\SO{4}/\U{2}$ and $\SO{8}/\Sp{1}\cdot\Sp{2}$ spaces---the spaces of all possible structures in the two cases.

To summarize (cf, \cite{GWZGHF}),

\begin{Corollary}\label{involutions}
The actions $
\left(
\begin{array}{c} 
x \\ 
x'
\end{array}
\right)\longrightarrow
\left(
\begin{array}{cc}
r & R_{\overline u} \\
R_u & -r
\end{array}
\right)  
\left(
\begin{array}{c}
x \\
x'
\end{array}
\right)\enspace $, when $u,x,x' \in \CC,\HH,\OO$ (and, in any case, $r \in \RR$ and $r^2+u\overline u =1$) generate the groups $\U{2}$, $\Sp{2}\cdot\Sp{1}$, $\Spin{9}$ of symmetries
of the Hopf fibrations
\[
S^3 \longrightarrow S^2, \qquad S^7 \longrightarrow S^4, \qquad S^{15} \longrightarrow S^8.
\]

The corresponding $G$-structures on the Riemannian manifolds $M^4$, $M^8$, $M^{16}$ can be described through $E \subset \mathrm{End} \; TM$ vector subbundles of ranks $3,5,9$, respectively. Any such $E$ is locally generated by the self-dual involutions $\mathcal I_\alpha$ satisfying $\mathcal I_\alpha \mathcal I_\beta = -\mathcal I_\beta \mathcal I_\alpha$ for $\alpha \neq \beta$ and related, on open neighborhoods covering $M$, by~functions that give matrices in $\SO{3}$, $\SO{5}$, and $\SO{9}$.
\end{Corollary}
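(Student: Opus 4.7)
The three cases ($\mathbb F = \CC, \HH, \OO$) are formally parallel, with $N = \dim_{\RR}\mathbb F \in \{2,4,8\}$, $n = N+1 \in \{3,5,9\}$, and $v = u+r \in S^{n-1}\subset \mathbb F\oplus\RR\cong\RR^n$. The ambient space is $\mathbb F^2 = \RR^{2N}$ of dimension $4, 8, 16$. The plan is to treat all three cases in parallel through three steps: first verify that each matrix in \eqref{HarSpC} is a self-adjoint involution on $\RR^{2N}$ and that the matrices associated with orthogonal $v,v'\in S^{n-1}$ anticommute; second deduce that their span $E^n\subset\End{\RR^{2N}}$ is a Clifford system realizing the spin representation of $\Cl{n}$; third identify the subgroup of $\SO{2N}$ preserving $E^n$ as the claimed group and verify that it acts as the symmetry group of the Hopf fibration $S^{2N-1}\to S^N$.

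For the first step, self-adjointness of \eqref{HarSpC} follows from $R_u^* = R_{\bar u}$, and squaring the matrix gives $(r^2+u\bar u)\Id = \Id$ upon using $R_u R_{\bar u} = R_{\bar u} R_u = u\bar u\cdot\Id$, which holds in each of $\CC,\HH,\OO$ by alternativity. For orthogonal $v,v'\in S^{n-1}$, the anticommutator reduces analogously to the identity $R_u R_{\bar{u'}} + R_{u'} R_{\bar u} = 2\,\mathrm{Re}(u\bar{u'})\,\Id$, valid in all three algebras, which vanishes exactly when $\langle v,v'\rangle = 0$.

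In the second step, one recognizes $\RR^{2N}$ as the real spin representation $\Delta_n$ of $\Cl{n}$, with dimensions matching the Atiyah--Bott--Shapiro table ($\dim\Delta_3 = 4$, $\dim\Delta_5 = 8$, $\dim\Delta_9 = 16$); the $n$ involutions \eqref{HarSpC} then give Clifford multiplication by a basis of $\RR^n$. In the third step, the group preserving the rank-$n$ subbundle $E^n$ is generated by (i) the centralizer in $\SO{2N}$ of the Clifford action and (ii) the lift to $\SO{2N}$ of the conjugation action of $\Spin{n}$ on $E^n \cong \RR^n$. For $n=9$ the representation $\Delta_9$ is real-irreducible, so the centralizer is trivial and one recovers $\Spin{9}$, matching the identification of the stabiliser of $E^9$ in Section \ref{Preliminaries}; for $n=5$ the representation $\Delta_5 = \HH^2$ carries a commuting quaternionic scalar action of $\Sp{1}$, and the computation preceding this corollary gives $\Spin{5}\cdot\Sp{1} = \Sp{2}\cdot\Sp{1}$; for $n=3$ the representation $\Delta_3 = \CC^2$ carries a commuting complex scalar action of $\U{1}$, giving $\Spin{3}\cdot\U{1}= \SU{2}\cdot\U{1}=\U{2}$.

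Finally, the action \eqref{HarSpC} preserves the Euclidean norm on $\RR^{2N} = \mathbb F^2$ and sends $\mathbb F$-lines to $\mathbb F$-lines, hence restricts to a fiber-preserving action on the total space of the Hopf fibration $S^{2N-1} \to \mathbb F P^1 = S^N$, exhibiting the claimed groups as its symmetry groups. The $G$-structure statement is then a direct transcription of Definition \ref{de:spin9structure} to the cases $n = 3, 5$. The main obstacle is the centralizer computation in the third step: one must show that the commutant of the Clifford action of $\Cl{3}$, respectively $\Cl{5}$, on $\CC^2$, respectively $\HH^2$, is exactly the scalar action of $\U{1}$, respectively $\Sp{1}$, which accounts for the extra factors distinguishing $\U{2}$ and $\Sp{2}\cdot\Sp{1}$ from the bare $\Spin{3}$ and $\Spin{5}$.
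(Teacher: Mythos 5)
Your proposal is correct, and in substance it follows the same logic that the paper's surrounding text uses (the corollary there is explicitly a ``to summarize'' statement): the stabilizer of $E^n$ in $\SO{2N}$ is the extension of $\SO{n}$ --- acting by rotating the local basis $\I_1,\dots,\I_n$ --- by the centralizer of the Clifford action. The difference is in execution. For $n=5$ the paper computes that centralizer by an explicit block-matrix argument (finding the diagonal $\Sp{1}_\Delta\subset\SO{8}$) and then recognizes the normalizer as $\Sp{2}\cdot\Sp{1}$; for $n=9$ it simply cites the literature for ``the subgroup of $\SO{16}$ preserving $E^9$ is $\Spin{9}$.'' You instead read off all three centralizers uniformly from the type (complex, quaternionic, real) of the irreducible $\Cl{n}$-module $\RR^{2N}$, which is cleaner and actually supplies the arguments the paper outsources; your verification of the Clifford relations via the polarized identity $R_uR_{\overline{u'}}+R_{u'}R_{\overline{u}}=2\langle u,u'\rangle\Id$ is also exactly what the alternativity of $\OO$ gives. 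Two points you should make explicit if this were written out in full: first, the induced map $\mathrm{Stab}(E^n)\to\Or{n}$ lands in $\SO{n}$ because conjugation fixes the central volume element $\I_1\cdots\I_n$, which transforms by the determinant of the induced change of basis; second, the claim that the generators \eqref{HarSpC} carry octonionic lines to octonionic lines --- needed to see that $\Spin{9}$ genuinely acts on the fibration $S^{15}\to S^8$ rather than merely on $S^{15}$ --- is not automatic in the non-associative case and requires a Moufang-identity computation (or the transitivity of $\Spin{9}$ on $S^{15}$ with stabilizer $\Spin{7}$ preserving a fibre). The paper defers this last point to its reference on Hopf fibrations, so it is not a gap relative to the paper, but it is the one place where the octonionic case is genuinely harder than the complex and quaternionic ones.
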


%
%
%
%
%

\section{Vector fields on spheres}\label{Spheres}

An application of $\Spin{9}$ structures is the possibility of writing a maximal orthonormal system of tangent vector fields on spheres of any dimension. Here, we outline this construction only on some ``low-dimensional'' cases (in fact. up to the $S^{511}$ sphere), referring, for the general case, to the linear algebra formalism developed in Ref. \cite{PaPSMS}.

Recall that the identifications $\RR^{2n} = \CC^n$,  $\RR^{4n} = \HH^n$ and $\RR^{8n} = \OO^n$ allow to act on the normal vector field of the unit sphere by the imaginary units of $\CC,\HH,\OO$, giving $1$, $3$, and $7$ tangent orthonormal vector fields on $S^{2n-1}$, $S^{4n-1}$, and $S^{8n-1}$. These are, in fact, a maximal system of linearly independent vector fields on $S^{m-1} \subset \RR^m$, provided the (even) dimension ($m$) of the ambient space is not divisible by $16$. The maximal number ($\sigma(m)$) of linearly independent vector fields on any $S^{m-1}$ is well-known to be expressed as
\[
\sigma(m) = 2^p + 8q -1\,
\]
where $\sigma(m) + 1 = 2^p + 8q$ is the \emph{Hurwitz--Radon number}, referring to the decomposition
\begin{equation}\label{eq:dec}
m = (2k+1)2^p 16^q, \qquad \text{where } 0 \leq p \leq 3,
\end{equation}
(cf. Ref. \cite{PaPSMS} for further information and references on this classical subject).

Table~\ref{somespheres} lists some of the lowest dimensional $S^{m-1} \subset \RR^m$ spheres that admit a maximal number $\sigma(m)>7$ of linearly independent vector fields.

\begin{table}[H]
\caption{Some $S^{m-1}$ spheres with more than seven vector fields.}\label{somespheres}
\begin{center}
{
\begin{tabular}{cccccccccccccccccccccc}
\toprule
$m-1$ & $15$ & $31$ & $47$ & $63$ & $79$ & $95$ & $111$ & $127$ & $143$&$159$&$175$&$191$& \dots & $255$ & \dots & $511$ & \dots\\
\midrule 
$\sigma(m)$ & $8$ & $9$ & $8$ & $11$ & $8$ & $9$ & $8$ & $15$ & $8$&$9$&$8$&$11$& \dots & $16$ & \dots & $17$ & \dots\\
\bottomrule
\end{tabular}
}
\end{center}
\end{table}

The first of them is $S^{15} \subset \RR^{16}$, which is acted on by $\Spin{9}\subset \SO{16}$. To write the eight vector fields on $S^{15}$, it is convenient to look at the involutions $\I_1, \dots ,\I_9$ and at the eight complex structures ($\J_{1},\dots,\J_{8}$) on $\RR^{16}$:
\[
\J_{\alpha}\ug\I_{\alpha}\I_{9}: \RR^{16} \longrightarrow \RR^{16}\enspace, \qquad \alpha =1,\dots,8.
\]

Denote by
\[
N\ug (x,y)\ug(x_1,\dots,x_8,y_1,\dots,y_8) 
\]
the (outward) unit normal vector field of $S^{15} \subset \RR^{16}$. Then, the following proposition applies.

\begin{Proposition}
The vector fields
{
\begin{equation}\label{otto}
\begin{split}
\J_{1}N =&(-y_1,-y_2,-y_3,-y_4,-y_5,-y_6,-y_7,-y_8,x_1,x_2,x_3,x_4,x_5,x_6,x_7,x_8),\\
\J_{2}N =&(-y_2,y_1,y_4,-y_3,y_6,-y_5,-y_8,y_7,-x_2,x_1,x_4,-x_3,x_6,-x_5,-x_8,x_7),\\
\J_{3}N =&(-y_3,-y_4,y_1,y_2,y_7,y_8,-y_5,-y_6,-x_3,-x_4,x_1,x_2,x_7,x_8,-x_5,-x_6),\\
\J_{4}N =&(-y_4,y_3,-y_2,y_1,y_8,-y_7,y_6,-y_5,-x_4,x_3,-x_2,x_1,x_8,-x_7,x_6,-x_5),\\
\J_{5}N =&(-y_5,-y_6,-y_7,-y_8,y_1,y_2,y_3,y_4,-x_5,-x_6,-x_7,-x_8,x_1,x_2,x_3,x_4),\\
\J_{6}N =&(-y_6,y_5,-y_8,y_7,-y_2,y_1,-y_4,y_3,-x_6,x_5,-x_8,x_7,-x_2,x_1,-x_4,x_3),\\
\J_{7}N =&(-y_7,y_8,y_5,-y_6,-y_3,y_4,y_1,-y_2,-x_7,x_8,x_5,-x_6,-x_3,x_4,x_1,-x_2),\\
\J_{8}N =&(-y_8,-y_7,y_6,y_5,-y_4,-y_3,y_2,y_1,-x_8,-x_7,x_6,x_5,-x_4,-x_3,x_2,x_1)
\end{split}
\end{equation}
}
are tangent to $S^{15}$ and orthonormal.
\end{Proposition}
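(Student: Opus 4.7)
The plan is to deduce everything from the abstract properties of the $\I_\alpha$ given in Definition~\ref{de:spin9structure} and forget, at first, the explicit component formulas. Set $\J_\alpha = \I_\alpha \I_9$ for $\alpha=1,\dots,8$. Since $\I_\alpha^* = \I_\alpha$ and $\I_\alpha \I_9 = -\I_9 \I_\alpha$, I compute
\[
\J_\alpha^* = \I_9^* \I_\alpha^* = \I_9 \I_\alpha = -\I_\alpha \I_9 = -\J_\alpha,
\qquad
\J_\alpha^2 = \I_\alpha \I_9 \I_\alpha \I_9 = -\I_\alpha^2 \I_9^2 = -\Id,
\]
so each $\J_\alpha$ is a skew-symmetric orthogonal complex structure on $\RR^{16}$. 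With this in hand the three required properties drop out with one line each.

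Tangency: since $\J_\alpha^* = -\J_\alpha$, one has $\langle \J_\alpha N, N\rangle = -\langle N, \J_\alpha N\rangle$, so the inner product vanishes and $\J_\alpha N \in T_N S^{15}$. Unit norm: $\J_\alpha^* \J_\alpha = -\J_\alpha^2 = \Id$, hence $\|\J_\alpha N\| = \|N\| = 1$. Mutual orthogonality for $\alpha \neq \beta$: using again $\J_\alpha^* = -\J_\alpha$ and $\I_9^2 = \Id$,
\[
\langle \J_\alpha N, \J_\beta N\rangle
= -\langle N, \J_\alpha \J_\beta N\rangle
= -\langle N, \I_\alpha \I_9 \I_\beta \I_9 N\rangle
= \langle N, \I_\alpha \I_\beta N\rangle,
\]
and since $(\I_\alpha \I_\beta)^* = \I_\beta \I_\alpha = -\I_\alpha \I_\beta$ for $\alpha\neq\beta$, this last inner product also vanishes.

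What remains is to check that the explicit coordinate formulas displayed in~\eqref{otto} really do coincide with the action of $\J_\alpha = \I_\alpha \I_9$ on $N=(x,y)$. Using the block form \eqref{eq:IO} of the $\I_\alpha$, one has
\[
\I_9 \begin{pmatrix} x \\ y \end{pmatrix} = \begin{pmatrix} x \\ -y \end{pmatrix},
\qquad
\I_\alpha \begin{pmatrix} x \\ y\end{pmatrix} = \begin{pmatrix} -R_{u_\alpha} y \\ R_{u_\alpha} x\end{pmatrix}
\quad (2\le \alpha \le 8),
\]
with $u_\alpha$ running through $i,j,k,e,f,g,h$, and the case $\alpha=1$ corresponding to $u_1=1$ (with the appropriate sign). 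Composing gives $\J_\alpha(x,y) = (-R_{u_\alpha} y,\, -R_{u_\alpha} x)$ for the relevant indices, and the components of~\eqref{otto} are then read off directly from the right-multiplication tables of $i,j,k,e,f,g,h$ encoded in the Cayley--Dickson rule~\eqref{oct}. This is the only laborious part, but it is completely mechanical, so the main obstacle is purely bookkeeping of octonionic signs rather than any conceptual difficulty; the orthonormality and tangency are already guaranteed by the structural argument above and need not be rechecked on components.
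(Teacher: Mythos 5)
Your structural argument is exactly what the paper itself relies on (the Proposition is stated without proof, and the remark immediately following it generalizes precisely along your lines): $\J_\alpha=\I_\alpha\I_9$ is skew-symmetric and orthogonal, which gives tangency and unit length, and $\J_\alpha^*\J_\beta=\I_\alpha\I_\beta$ up to sign is skew-symmetric for $\alpha\neq\beta$, which gives mutual orthogonality. That part is correct and complete.

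There is, however, a sign slip in the coordinate identification you defer as ``mechanical''. With the matrix-product convention of the paper ($\I_9$ acts first), one gets
\[
\J_\alpha\begin{pmatrix}x\\y\end{pmatrix}=\I_\alpha\begin{pmatrix}x\\-y\end{pmatrix}=\begin{pmatrix}-R_{u_\alpha}(-y)\\ R_{u_\alpha}x\end{pmatrix}=\begin{pmatrix}R_{u_\alpha}y\\ R_{u_\alpha}x\end{pmatrix}\qquad(2\le\alpha\le8),
\]
not $(-R_{u_\alpha}y,\,-R_{u_\alpha}x)$; your formula is $-\J_\alpha N$, i.e., it corresponds to $\I_9\I_\alpha$ rather than $\I_\alpha\I_9$. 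The corrected sign is confirmed by \eqref{otto}: for instance, the last eight components of $\J_2N$ are $(-x_2,x_1,x_4,-x_3,x_6,-x_5,-x_8,x_7)=R_ix$, with a plus sign (and $\J_1N=(-y,x)$ is the genuinely exceptional case, since $\I_1$ has $+\Id$ in both off-diagonal blocks). The slip is harmless for tangency and orthonormality, which are invariant under an overall sign, but it does matter for the literal claim that the displayed components equal $\J_\alpha N$.
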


Indeed, by fixing any $\beta$, $1\leq \beta \leq 9$ and considering the $8$ complex structures $\I_\alpha\I_\beta$ with $\alpha \neq \beta$, the~eight vector fields
($\I_\alpha\I_\beta N$) are still tangential to $S^{15}$ and orthonormal.

Although it is well-known that $\CC, \HH$, and $\OO$ are the only normed algebras over $\RR$,
to move to a~higher dimension, it is convenient to consider the algebra $\mathbb{S}$ of sedenions, obtained from $\OO$ through the Cayley--Dickson process. 
By denoting the canonical basis of $\Se$ over $\RR$ by $1, e_1,\dots ,e_{15}$, one can write a multiplication table (cf. Ref. \cite{PaPSMS}). An example of divisors of the zero in $\Se$ is given by $(e_2 - e_{11})(e_7 + e_{14})=0$.

The following remark helps in higher dimensions. Consider the $S^{m-1} \subset\RR^m$ sphere, and~decompose $m$ as $m=(2k+1)2^p16^q$, where $p\in\{0,1,2,3\}$. First, observe that a vector field ($B$) that is tangential to the $S^{2^p16^q-1}\subset\RR^{2^p16^q}$ sphere induces a vector field
\begin{equation}\label{eq:blockwise}
\underbrace{(B,\dots,B)}_{2k+1\text{ times}}
\end{equation}
that is tangential to the $S^{(2k+1)2^p16^q-1}$ sphere. Thus, assume in what follows that $k=0$, i.e., $m=2^p16^q$. Whenever we extend a vector field in this way, we call the vector field given by~\eqref{eq:blockwise} the \emph{diagonal extension} of $B$.

If $q=0$, that is, if $m$ is not divisible by $16$, the vector fields on $S^{m-1}$ are given by the complex, quaternionic, or octonionic multiplication for $p=1, 2$, or $3$ respectively, so that the $\Spin{9}$ contribution occurs when $q \geq 1$, that is, $m=16l$, and we can denote the coordinates in $\RR^{16l}$ by $(s^1,\dots,s^l)$, where~each $s^\alpha$, for $\alpha=1,\dots,l$, belongs to the sedenions ($\Se$), and can thus be identified with a pair ($(x^\alpha,y^\alpha)$) of octonions.

The unit (outward) normal vector field ($N$) of $S^{16l-1}$ can be denoted by using the sedenions:
\[
N \ug (s^1,\dots,s^l)\quad\text{where}\quad\norm{s^1}^2+\dots+\norm{s^l}^2=1.
\]

Therefore, we can think of $N$ as an element of $\Se^l=\OO^{2l}=\RR^{16l}$.

Whenever $l=2, 4$, or $8$, denoted by $\coniugiobaseraw$, the following automorphism of $\Se^l=\OO^{2l}$ applies.
\begin{equation}\label{star}
\coniugiobaseraw: ((x^1,y^1),\dots,(x^l,y^l))\longrightarrow ((x^1,-y^1),\dots, (x^l,-y^l)).
\end{equation}

We refer to $\coniugiobaseraw$ as a \emph{conjugation}, due to its similarity to that in $*$-algebras.

Moreover, it is convenient to use the following formal notations:
\begin{align}
N&=(s^1,s^2)\ug s^1+i s^2 \in S^{31},\label{eq:formalC}\\
N&=(s^1,s^2,s^3,s^4)\ug s^1+i s^2 +j s^3 +k s^4 \in S^{63},\label{eq:formalH}\\
N&=(s^1,s^2,s^3,s^4,s^5,s^6,s^7,s^8)\ug s^1+i s^2 +j s^3 +k s^4+e s^5+f s^6+g s^7+h s^8 \in S^{127},\label{eq:formalO}
\end{align}
which allow us to define left multiplications ($\LF$) in the sedenionic spaces $\Se^{2}$, $\Se^{4}$, and $\Se^{8}$ (like in $\CC$, $\HH$, and $\OO$), as follows.

If $l=2$, the left multiplication is
\begin{equation}\label{i}
\LF_i(s^1,s^2)\ug -s^2+is^1,
\end{equation}
whereas, if $l=4$, we define
\begin{equation}\label{ijk}
\begin{split}
\LF_i(s^1,\dots,s^4)&\ug -s^2+i s^1-js^4+ k s^3,\\
\LF_j(s^1,\dots,s^4)&\ug -s^3+i s^4+js^1- k s^2,\\
\LF_k(s^1,\dots,s^4)&\ug -s^4-i s^3+js^2+ k s^1,
\end{split}
\end{equation}
and finally, if $l=8$, we define
\begin{equation}\label{ijkefgh}
\begin{split}
\LF_i(s^1,\dots,s^8)&\ug -s^2+i s^1-js^4+ k s^3- e s^6+f s^5+gs^8-h s^7, \\
\LF_j(s^1,\dots,s^8)&\ug -s^3+i s^4+js^1- k s^2 - e s^7-f s^8 +g s^5+ h s^6, \\
\LF_k(s^1,\dots,s^8)&\ug -s^4-i s^3+js^2+ k s^1- e s^8+f s^7- g s^6+ h s^5, \\
\LF_e(s^1,\dots,s^8)&\ug -s^5+i s^6+js^7+ k s^6+e s^1-f s^2-gs^3 -h s^4, \\
\LF_f(s^1,\dots,s^8)&\ug -s^6-i s^5+js^8- k s^7+es^2+f s^1+gs^4-h s^3, \\
\LF_g(s^1,\dots,s^8)&\ug -s^7-i s^8-js^5+ k s^6+es^3-f s^4+gs^1+ h s^2, \\
\LF_h(s^1,\dots,s^8)&\ug -s^8+i s^7-js^6- k s^5 +es^4+f s^3-gs^2+ h s^1.
\end{split}
\end{equation}

Note that, in all three cases, $l=2, 4$, and $8$, and the vector fields $\LF_i(N),\dots,\LF_h(N)$ are tangential to $S^{31}$, $S^{63}$, and $S^{127}$, respectively.

We can now write the maximal systems of vector fields on $S^{31}$, $S^{63}$ and $S^{127}$ as follows.

\vspace{6pt}
\noindent{\bf Case $\mathbf{p=1}$}
\vspace{6pt}

For $S^{31}$, whose maximal number of tangent vector fields is nine, we obtain eight vector fields by writing the unit normal vector field as $N=(s^1,s^2)=(x^1,y^1,x^2,y^2)\in S^{31}\subset \Se^{2}$, where $x^1,y^1,x^2,y^2\in\OO$, and repeating Formula~\eqref{otto} for each pair ($(x^1,y^1),(x^2,y^2)$):
{
\begin{myequation2}\label{otto'}
\begin{split}
\J_{1}N=(\J_{1}s^1,\J_{1}s^2) &=(-y^1_1,-y^1_2,\dots ,-y^1_7 ,-y^1_8,x^1_1,x^1_2,\dots ,x^1_7 ,x^1_8,-y^2_1,-y^2_2, \dots ,-y^2_7,-y^2_8,x^2_1,x^2_2,\dots ,x^2_7 ,x^2_8),\\
\J_{2}N=(\J_{2}s^1,\J_{2}s^2) &=(-y^1_2,y^1_1,\dots ,-y^1_8,y^1_7,-x^1_2,x^1_1,\dots -x^1_8,x^1_7,-y^2_2,y^2_1,\dots ,-y^2_8 ,y^2_7,-x^2_2,x^2_1,\dots ,-x^2_8 ,x^2_7),\\
\J_{3}N=(\J_{3}s^1,\J_{3}s^2) &=(-y^1_3,-y^1_4,\dots ,-y^1_5,-y^1_6,-x^1_3,-x^1_4,\dots ,-x^1_5,-x^1_6,-y^2_3,-y^2_4,\dots ,-y^2_5,-y^2_6,-x^2_3,-x^2_4,\dots ,-x^2_5,-x^2_6),\\
\J_{4}N=(\J_{4}s^1,\J_{4}s^2) &=(-y^1_4,y^1_3,\dots ,y^1_6 ,-y^1_5,-x^1_4,x^1_3,\dots ,x^1_6 ,-x^1_5,-y^2_4,y^2_3,\dots ,y^2_6,-y^2_5,-x^2_4,x^2_3,\dots ,x^2_6,-x^2_5),\\
\J_{5}N=(\J_{5}s^1,\J_{5}s^2) &=(-y^1_5,-y^1_6,\dots ,y^1_3 ,y^1_4,-x^1_5,-x^1_6,\dots ,x^1_3,x^1_4,-y^2_5,-y^2_6,\dots ,y^2_3,y^2_4,-x^2_5,-x^2_6,\dots ,x^2_3 ,x^2_4),\\
\J_{6}N=(\J_{6}s^1,\J_{6}s^2) &=(-y^1_6,y^1_5,\dots ,-y^1_4 ,y^1_3,-x^1_6,x^1_5,\dots ,-x^1_4 ,x^1_3,-y^2_6,y^2_5,\dots ,-y^2_4 ,y^2_3,-x^2_6,x^2_5,\dots ,-x^2_4 ,x^2_3),\\
\J_{7}N=(\J_{7}s^1,\J_{7}s^2) &=(-y^1_7,y^1_8,\dots ,y^1_1 ,-y^1_2,-x^1_7,x^1_8,\dots ,x^1_1 ,-x^1_2,-y^2_7,y^2_8,\dots ,y^2_1 ,-y^2_2,-x^2_7,x^2_8,\dots ,x^2_1,-x^2_2),\\
\J_{8}N=(\J_{8}s^1,\J_{8}s^2) &=(-y^1_8,-y^1_7,\dots ,y^1_2 ,y^1_1,-x^1_8,-x^1_7, \dots ,x^1_2 ,x^1_1,-y^2_8,-y^2_7,\dots ,y^2_2 ,y^2_1,-x^2_8,-x^2_7, \dots ,x^2_2 ,x^2_1). 
\end{split}
\end{myequation2}
}

A ninth orthonormal vector field, completing the maximal system, is found by the formal left multiplication~\eqref{i} 
and the $\coniugiobaseraw$ automorphism~\eqref{star}:
\begin{equation}
\coniugiobaseraw(\LF_i N)=\coniugiobaseraw(-s^2,s^1)=(-x^2,y^2,x^1,-y^1).
\end{equation}

\vspace{6pt}
\noindent{\bf Case $\mathbf{p=2}$}
\vspace{6pt}

The $S^{63}$ sphere has a maximal number of $11$ orthonormal vector fields. The normal vector field is, in this case, given by $N=(s^1,\dots,s^4)=(x^1,y^1,\dots,x^4,y^4)\in S^{63} \subset \Se^{4}$, and eight vector fields arise as $\J_{\alpha}N$, for $\alpha=1,\dots,8$. Three other vector fields are again given by the formal left multiplications~\eqref{ijk} and the $\coniugiobaseraw$ automorphism in~\eqref{star}:
\begin{equation}
\begin{split}
\coniugiobaseraw(\LF_i N) &=(-x^2,y^2,x^1,-y^1,-x^4,y^4,x^3,-y^3),\\ 
\coniugiobaseraw (\LF_j N) &=(-x^3,y^3,x^4,-y^4,x^1,-y^1,-x^2,y^2),\\
\coniugiobaseraw (\LF_k N) &=(-x^4,y^4,-x^3,y^3,x^2,-y^2,x^1,-y^1).
\end{split}
\end{equation}

\vspace{6pt}
\noindent{\bf Case $\mathbf{p=3}$}
\vspace{6pt}

The $S^{127}$ sphere has a maximal number of $15$ orthonormal vector fields. Eight of them are still given by $\J_{\alpha}N$ for $\alpha=1,\dots,8$, whereas the formal left multiplications given in~\eqref{ijkefgh} yield the seven tangent vector fields ($\coniugiobaseraw(\LF_\alpha N)$), for $\alpha\in\{i,\dots,h\}$.

\vspace{6pt}
\noindent{\bf The $S^{255}$\label{sec:256} sphere} 
\vspace{6pt}

To write a system of $16$ orthonormal vector fields on $S^{255} \subset \RR^{256}$, decompose
\begin{equation}\label{256}
\RR^{256} = \RR^{16} \oplus \dots \oplus \RR^{16}
\end{equation}
into sixteen components. The unit outward normal vector field is
\[
N=(s^1,\dots,s^{16}),
\]
where $s^1,\dots, s^{16}$ are sedenions.

The matrices in $M_{16}(\RR)$ which give the complex structures $\J_{1},\dots,\J_{8}$ act on $N$ not only separately on each of the $16$-dimensional components of~\eqref{256}, but also formally on the (column) $16$-ples of sedenions $(s^1,\dots,s^{16})^T$. Based on which of the two actions of the same matrices are considered in $\RR^{256}$, we use the notations 
\[
\J_{1},\dots,\J_{8} \qquad \text{or} \qquad \blocktext(\J_{1}),\dots, \blocktext(\J_{8}),
\]
in both cases being all complex structures on $\RR^{256}$. The following $16$ vector fields are obtained: 
\begin{align}
\J_{1}N\enspace,\enspace&\dots\enspace,\J_{8}N\enspace,\label{eq:level1}\\
\coniugiobaseraw(\blocktext(\J_{1})N)\enspace,\enspace&\dots\enspace,\coniugiobaseraw(\blocktext(\J_{8})N)\enspace,\label{eq:level2}
\end{align}
where $\coniugiobaseraw$ is defined in Formula~\eqref{star}.
The \emph{level $1$ vector fields} and \emph{level $2$ vector fields} are the ones given by \eqref{eq:level1} and \eqref{eq:level2}, respectively. Then, the following proposition applies.

\begin{Proposition}\label{16}
Formulas~\eqref{eq:level1} and \eqref{eq:level2} give a maximal system of $16$ orthonormal tangent vector fields on $S^{255}$.
\end{Proposition}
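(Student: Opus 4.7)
\smallskip

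\noindent\textbf{Proof proposal.} My approach is to realize the sixteen vector fields \eqref{eq:level1}--\eqref{eq:level2} as $T_k N$ for sixteen operators $T_k$ on $\RR^{256}$ that are skew-symmetric, pairwise anti-commuting, and each square to $-\Id$. Tangency and orthonormality are then automatic, and the maximality claim is simply $\sigma(256)=16$ from Table~\ref{somespheres}.

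The key identification is $\RR^{256}=\RR^{16}\otimes\RR^{16}$, with the left factor indexing the sixteen sedenionic blocks $s^1,\dots,s^{16}$ and the right factor giving coordinates within each block $s^i=(x^i,y^i)\in\OO\oplus\OO$. Under this decomposition, the block-by-block action of $\J_\alpha$ on $N$ is $\Id\otimes\J_\alpha$, the formal matrix action $\blocktext(\J_\alpha)$ is $\J_\alpha\otimes\Id$, and, crucially, $\coniugiobaseraw$ equals $\Id\otimes\I_9$, since $\I_9=\diagtext(\Id_\OO,-\Id_\OO)$ is exactly the sign-flip on the $y$-part of each sedenion. Thus the level-$1$ and level-$2$ operators are
\[
A_\alpha := \Id\otimes\J_\alpha, \qquad B_\alpha := \coniugiobaseraw\circ\blocktext(\J_\alpha) = \J_\alpha\otimes\I_9, \qquad \alpha=1,\dots,8.
\]

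The bulk of the work is the Clifford-type relations. Skew-symmetry and square $-\Id$ of each $A_\alpha$ and each $B_\alpha$ follow at once from $\J_\alpha^*=-\J_\alpha$, $\J_\alpha^2=-\Id$, $\I_9^*=\I_9$, $\I_9^2=\Id$. Intra-family anticommutation for $\alpha\neq\beta$ reduces to $\J_\alpha\J_\beta=-\J_\beta\J_\alpha$. The main obstacle, and the only nontrivial step, is the cross-family anticommutation: a direct computation gives
\[
A_\alpha B_\beta = \J_\beta\otimes\J_\alpha\I_9, \qquad B_\beta A_\alpha = \J_\beta\otimes\I_9\J_\alpha,
\]
so $\{A_\alpha,B_\beta\}=0$ for all $\alpha,\beta$ reduces to the single identity $\J_\alpha\I_9+\I_9\J_\alpha=0$ on $\RR^{16}$. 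This I would derive from $\J_\alpha=\I_\alpha\I_9$ together with the Clifford relation $\I_\alpha\I_9=-\I_9\I_\alpha$: namely $\J_\alpha\I_9=\I_\alpha\I_9^2=-\I_\alpha$, while $\I_9\J_\alpha=\I_9\I_\alpha\I_9=-\I_\alpha\I_9^2=\I_\alpha$, so the two agree up to sign.

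To finish, invoke the standard Clifford-fact that a family $T_1,\dots,T_k$ of skew-symmetric operators on $\RR^n$ with $T_i^2=-\Id$ and pairwise anti-commuting produces, for every unit $N$, an orthonormal system $\{T_iN\}$ tangent to $S^{n-1}$: tangency is $\langle T_iN,N\rangle=0$ from skew-symmetry, the norm-one property is $\|T_iN\|^2=\langle N,-T_i^2 N\rangle=1$, and for $i\neq j$ one has $\langle T_iN,T_jN\rangle=-\langle N,T_iT_jN\rangle=0$ because $T_iT_j$ is itself skew-symmetric under anticommutation. Applying this to $A_1,\dots,A_8,B_1,\dots,B_8$ on $\RR^{256}$ yields the asserted system of sixteen orthonormal tangent vector fields on $S^{255}$, and maximality is the tabulated value $\sigma(256)=16$.
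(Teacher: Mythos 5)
Your proof is correct and takes a genuinely cleaner route than the paper's. The paper verifies Proposition~\ref{16} by writing out all sixteen vectors in coordinates (its displays \eqref{B} and \eqref{B'}), declaring the two octets orthonormal by inspection, and then establishing the cross-family orthogonality $\langle\J_\alpha N,\coniugiobaseraw(\blocktext(\J_\beta)N)\rangle=0$ by a hands-on expansion of the octonionic entries into quaternionic halves, closing with the cyclic trace identity $\Re(hh'h'')=\Re(h'h''h)$. You instead observe the tensor-factorization $\RR^{256}\cong\RR^{16}\otimes\RR^{16}$ under which the diagonal extension is $\Id\otimes\J_\alpha$, the block action is $\J_\alpha\otimes\Id$, and --- the crucial identification the paper leaves implicit --- the conjugation $\coniugiobaseraw$ is $\Id\otimes\I_9$. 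Once $A_\alpha=\Id\otimes\J_\alpha$ and $B_\alpha=\J_\alpha\otimes\I_9$ are in hand, all the required Clifford-type relations (skewness, square $-\Id$, pairwise anticommutation, including the cross-family case via $\J_\alpha\I_9+\I_9\J_\alpha=0$ which you derive correctly from $\J_\alpha=\I_\alpha\I_9$) drop out formally, and the orthonormal-tangent-frame conclusion follows from the standard lemma you state. What your approach buys is conceptual transparency: the paper's coordinate computation amounts to re-proving, entry by entry, exactly the operator identity $\J_\alpha\I_9+\I_9\J_\alpha=0$ that you isolate as the single nontrivial step, and it obscures the fact that the sixteen operators actually form a genuine Clifford family on $\RR^{256}$ rather than merely producing orthogonal outputs on $N$. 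Your route also scales directly to the more general Proposition following it in the paper (with $\I_9$ replaced by an arbitrary $\I_\beta$), whereas the paper's coordinate computation would have to be redone.
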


\begin{proof}
Denote sedenions as pairs $s^\alpha\ug(x^\alpha,y^\alpha)$ of octonions. 
The unit normal vector field is
\begin{equation}
N=(s^1,\dots,s^{16})=(x^1,y^1,\dots,x^{16},y^{16}) \in S^{255},
\end{equation}
and one gets the following tangent vectors that can be easily checked to be orthonormal:
\begin{equation}\label{B}
\begin{split}
\J_{1}N&=(\J_{1} s^1,\dots,\J_{1}s^{16}) = (-y^1,x^1,\dots,-y^{16},x^{16}),\\
\J_{2}N&=(\J_{2} s^1,\dots,\J_{2}s^{16}) = (R_i y^1,R_i x^1,\dots,R_i y^{16}, R_i x^{16}),\\
\J_{3}N&=(\J_{3} s^1,\dots,\J_{3}s^{16}) = (R_j y^1,R_j x^1,\dots,R_j y^{16}, R_j x^{16}),\\
\J_{4}N&=(\J_{4} s^1,\dots,\J_{4}s^{16}) = (R_k y^1,R_k x^1,\dots,R_k y^{16}, R_k x^{16}),\\
\J_{5}N&=(\J_{5} s^1,\dots,\J_{5}s^{16}) = (R_e y^1,R_e x^1,\dots,R_e y^{16}, R_e x^{16}),\\
\J_{6}N&=(\J_{6} s^1,\dots,\J_{6}s^{16}) = (R_f y^1,R_f x^1,\dots,R_f y^{16}, R_f x^{16}),\\
\J_{7}N&=(\J_{7} s^1,\dots,\J_{7}s^{16}) = (R_g y^1,R_g x^1,\dots,R_g y^{16}, R_g x^{16}),\\
\J_{8}N&=(\J_{8} s^1,\dots,\J_{8}s^{16}) = (R_h y^1,R_h x^1,\dots, R_h y^{16}, R_h x^{16}).
\end{split}
\end{equation}

Moreover, one obtains eight further vector fields which can be similarly verified to be orthonormal.
\begin{equation}\label{B'}
\begin{split}
\coniugiobaseraw(\blocktext(\J_{1})N)&=\coniugiobaseraw(-s^9,-s^{10},-s^{11},-s^{12},-s^{13},-s^{14},-s^{15},-s^{16}, s^1,s^2,s^3,s^4,s^5,s^6,s^7,s^8),\\
\coniugiobaseraw(\blocktext(\J_{2})N)&=\coniugiobaseraw(-s^{10}, s^9,s^{12},-s^{11},s^{14},-s^{13}, -s^{16},s^{15}, -s^2,s^1,s^4,-s^3,s^6,-s^5,-s^8,s^7),\\
\coniugiobaseraw(\blocktext(\J_{3})N)&= \coniugiobaseraw(-s^{11},-s^{12},s^{9},s^{10},s^{15},s^{16}, -s^{13},-s^{14},- s^3,-s^4,s^1,s^2,s^7,s^8,-s^5,-s^6),\\
\coniugiobaseraw(\blocktext(\J_{4})N)&=\coniugiobaseraw(-s^{12},s^{11},-s^{10},s^{9},s^{16},-s^{15}, s^{14},-s^{13}, -s^4,s^3,-s^2,s^1,s^8,-s^7,s^6,-s^5),\\
\coniugiobaseraw(\blocktext(\J_{5})N)&= \coniugiobaseraw(-s^{13},-s^{14},-s^{15},-s^{16},s^{9},s^{10}, s^{11},s^{12},- s^5,-s^6,-s^7,-s^8,s^1,s^2,s^3,s^4),\\
\coniugiobaseraw(\blocktext(\J_{6})N)&= \coniugiobaseraw(-s^{14},s^{13},-s^{16},s^{15},-s^{10},s^{9}, -s^{12},s^{11}, -s^6,s^5,-s^8,s^7,-s^2,s^1,-s^4,s^3),\\
\coniugiobaseraw(\blocktext(\J_{7})N)&= \coniugiobaseraw(-s^{15},s^{16},s^{13},-s^{14},-s^{11},s^{12}, s^{9},-s^{10}, -s^7,s^8,s^5,-s^6,-s^3,s^4,s^1,-s^2),\\
\coniugiobaseraw(\blocktext(\J_{8})N)&= \coniugiobaseraw(-s^{16},-s^{15},s^{14},s^{13},-s^{12},-s^{11}, s^{10},s^{9}, -s^8,-s^7,s^6,s^5,-s^4,-s^3,s^2,s^1),\\
\end{split}
\end{equation}

To see that each $\J_{\alpha}N$ vector is orthogonal to each $\coniugiobaseraw(\blocktext(\J_{\beta})N)$, for $\alpha,\beta=1,\dots,8$, look at the matrix representations of $R_i,\dots,R_h$ and write the octonionic coordinates as $x^\lambda = h_1^\lambda + h_2^\lambda e$, $y^\mu~=~k_1^\mu~+~k_2^\mu e$. Then, the scalar product $<\J_{\alpha}N,\coniugiobaseraw(\blocktext(\J_{\beta})N)>$ can be computed with Formula~\eqref{oct} to obtain the product of the octonions. For example, recall from Formula~\eqref{B} that
\[
\J_{8}N=(y^1h,x^1h,\dots,y^8h,x^8h,y^9h,x^9h,\dots,y^{16}h,x^{16}h),
\]
so that the computation of $<\J_{8}N,\coniugiobaseraw(\blocktext(\J_{1})N)>$ gives rise to pairs of terms like in
\[
\begin{split}
<\J_{8}N,\coniugiobaseraw(\blocktext(\J_{1})N)> = \Re(-(R_hy^1)\overline{x}^9 -(R_h x^9) \overline{y}^1 + \dots) = \\ = \Re(\underline{-k k_2^1 \overline{h}_1^9} - \underline{\underline{\overline{h}_2^9 k k_1^1}}-\underline{\underline{k h_2^9\overline{k}_1^1}}- \underline{\overline{k}_2^1 k h_1^9}  + \dots).
\end{split}
\]

To conclude, observe that the real part ($\Re$) of the sums of each of the corresponding underlined terms is zero. This is due to the identity ($\Re(h h' h'') = \Re(h' h'' h)$), that holds for all $h,h',h'' \in \HH$.
\end{proof}

More generally, the following proposition applies.

\begin{Proposition}
Fix any $\beta$, $1\leq \beta \leq 9$, consider the eight complex structures $\I_{\alpha}\I_{\beta}$, with $\alpha \neq \beta$, defined on $\RR^{256} =\RR^{16} \oplus \dots \oplus \RR^{16}$ by acting with the corresponding matrices on the listed $16$-dimensional components, that is, the diagonal extension of $\I_{\alpha}\I_{\beta}$. Also, consider the further eight complex structures $\coniugiobaseraw(\blocktext(\I_{\alpha}\I_{\beta}))$ for $\alpha \neq \beta$, defined by the same matrices that now act on the column matrix of sedenions ($(s^1, \dots s^{16})^T$). Then,
\[
\{\I_{\alpha}\I_{\beta}N,\coniugiobaseraw(\blocktext(\I_{\alpha}\I_{\beta})N)\}_{\alpha\neq\beta}
\]
is a maximal system of $16$ orthonormal tangent vector fields on $S^{255}$.
\end{Proposition}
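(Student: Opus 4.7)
The plan is to mirror the proof of Proposition~\ref{16} (which is the special case $\beta=9$), observing that the underlying ingredients, namely the Clifford relations~\eqref{top} and the octonion identity $\Re(hh'h'')=\Re(h'h''h)$, are symmetric in the index $\beta$.

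First I would check tangentiality and orthonormality within each of the two groups of eight vectors. For $\beta$ fixed and $\alpha\neq\beta$, the Clifford relations force $\I_\alpha\I_\beta$ to be skew-symmetric and to square to $-\Id$, hence a complex structure on $\RR^{16}$. Its diagonal extension to $\RR^{256}$ is still a complex structure, so $\I_\alpha\I_\beta N\perp N$. For $\alpha\neq\alpha'$ (both different from $\beta$), one computes
\[
(\I_\alpha\I_\beta)(\I_{\alpha'}\I_\beta)=-\I_\alpha\I_{\alpha'}\I_\beta^{\,2}=-\I_\alpha\I_{\alpha'},
\]
which is again a complex structure, so skew, yielding $\langle \I_\alpha\I_\beta N,\I_{\alpha'}\I_\beta N\rangle=0$. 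The same holds for the block family once we observe that $\coniugiobaseraw$ (acting inside each sedenion component) and $\blocktext(\I_\alpha\I_\beta)$ (permuting and signing entire sedenionic entries) commute, so
\[
\bigl(\coniugiobaseraw\,\blocktext(\I_\alpha\I_\beta)\bigr)^{2}=\coniugiobaseraw^{\,2}\,\blocktext(\I_\alpha\I_\beta)^{2}=-\Id,
\]
and the anticommutation of $\blocktext(\I_\alpha\I_\beta)$ and $\blocktext(\I_{\alpha'}\I_\beta)$ follows from the same Clifford computation.

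The main step, and the one I expect to be the principal obstacle, is the cross-orthogonality
\[
\bigl\langle \I_\alpha\I_\beta N,\;\coniugiobaseraw\,\blocktext(\I_{\alpha'}\I_\beta)N\bigr\rangle=0\qquad(\alpha,\alpha'\neq\beta).
\]
Here I would run the same pairing argument used in the proof of Proposition~\ref{16}: writing the octonionic coordinates as $x^{\lambda}=h_{1}^{\lambda}+h_{2}^{\lambda}e$, $y^{\mu}=k_{1}^{\mu}+k_{2}^{\mu}e$ and expanding via~\eqref{oct}, the scalar product splits into quadruples of real traces of products of three quaternions; these cancel in pairs by the cyclic identity $\Re(hh'h'')=\Re(h'h''h)$. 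The only bookkeeping difference with respect to the case $\beta=9$ is that the role previously played by the $\Id$ block in $\I_9$ is now played by a right-multiplication block in $\I_\beta$, which only amounts to inserting one extra right-multiplication by a unit octonion inside each trace, and the cyclic identity still produces the required cancellations.

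Finally, maximality is automatic: the Hurwitz--Radon decomposition $256=2^{4}\cdot 16$ gives $\sigma(256)=16$, so once the $16$ vectors are shown to be orthonormal and tangent, they form a maximal system on $S^{255}$. An alternative, if one wishes to avoid the explicit pairing, is to observe that changing $\beta$ corresponds to rotating the frame $(\I_{1},\dots,\I_{9})$ of $E^{9}$ by an element of $\SO(9)$, which lifts to $\Spin(9)$ and preserves the whole construction; the statement for general $\beta$ then follows from Proposition~\ref{16} applied to this rotated frame.
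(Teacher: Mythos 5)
Your within-family computations and your overall plan (mirror the proof of Proposition~\ref{16}, whose special case $\beta=9$ this is) are exactly what the paper intends, since the paper states the general case without further argument. The genuine gap is in the cross-orthogonality step, and it is not a matter of bookkeeping. Write $\RR^{256}=\RR^{16}\otimes\RR^{16}$, the first factor indexing the sixteen sedenion slots and the second the sedenion itself; then the diagonal extension of $\I_\alpha\I_\beta$ is $\Id\otimes\I_\alpha\I_\beta$, while $\blocktext(\I_{\alpha'}\I_\beta)=\I_{\alpha'}\I_\beta\otimes\Id$ and $\coniugiobaseraw=\Id\otimes\I_9$. Hence
\[
\bigl\langle \I_\alpha\I_\beta N,\;\coniugiobaseraw\,\blocktext(\I_{\alpha'}\I_\beta)N\bigr\rangle
=\bigl\langle N,\;(\I_{\alpha'}\I_\beta)\otimes\bigl((\I_\alpha\I_\beta)^{T}\I_9\bigr)\,N\bigr\rangle .
\]
For $\beta=9$ one has $(\I_\alpha\I_9)^{T}\I_9=-\I_\alpha$, which is \emph{symmetric}, so the operator above is (antisymmetric)$\otimes$(symmetric), hence antisymmetric, and the quadratic form vanishes identically: this is the structural reason behind the pairwise cancellations via $\Re(hh'h'')=\Re(h'h''h)$ in the proof of Proposition~\ref{16}. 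For $\beta\le 8$ and $\alpha\neq 9$, however, $(\I_\alpha\I_\beta)^{T}\I_9=\I_\beta\I_\alpha\I_9$ is a product of three distinct anticommuting $\I$'s, hence \emph{skew}-symmetric; the operator is then (antisymmetric)$\otimes$(antisymmetric), i.e.\ a nonzero symmetric matrix, and the inner product cannot vanish for all $N\in S^{255}$. No amount of cyclic shuffling of quaternion traces can rescue this: the obstruction is a parity of symmetry, not a sign to be chased. The computation does go through if $\coniugiobaseraw$ is replaced by the diagonal extension of $\I_\beta$ itself (which coincides with the $\coniugiobaseraw$ of Formula~\eqref{star} exactly when $\beta=9$), since then the fibre factor becomes $(\I_\alpha\I_\beta)^{T}\I_\beta=-\I_\alpha$, symmetric again.

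Your closing ``alternative'' argument is in fact closer to a correct proof than the main one, but it does not establish the statement with the fixed $\coniugiobaseraw$: conjugating everything by the diagonal action of the element of $\Spin{9}$ rotating $\I_9$ into $\pm\I_\beta$ transforms not only the $\I_\alpha\I_9$ but also $\coniugiobaseraw=\Id\otimes\I_9$ into $\Id\otimes(\pm\I_\beta)$, so what you obtain is precisely the corrected system just described. You should either prove the statement in that adapted form or supply the symmetry analysis above to see why the literal one resists. A small further slip: the Hurwitz--Radon decomposition of $256$ is $16^2$, i.e.\ $p=0$, $q=2$ in~\eqref{eq:dec} ($p=4$ is not allowed there), although the value $\sigma(256)=16$ you quote is correct.
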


The $m=2\cdot 16^2$ dimension, that is, the $S^{511}$ sphere, is the lowest dimensional case where the last ingredient of our construction enters. To define the additional vector field here, we need to extend the formal left multiplication defined by Formula~\eqref{i}. Consider the decomposition
\[
\RR^{2\cdot 16^2}=\RR^{16^2}\oplus\RR^{16^2}
\]
and denote the elements in $\RR^{16^2}$ by $s^1,s^2$
. Using the notation
\begin{align}
N&=(s^1,s^2)\ug s^1+i s^2 \in S^{2\cdot 16^2-1}\enspace,\label{eq:formalCgeneral}
\end{align}
a formal left multiplication ($\LF_i$ in $\RR^{2\cdot 16^2}$) can be defined with Formula~\eqref{i}. It can then be expected that $\coniugiobaseraw(\LF_iN)$ is orthogonal to $\{\J_{\alpha}N,\coniugiobaseraw(\blocktext(\J_{\alpha})N)\}_{\alpha=1,\dots,8}$, but this is not the case. In fact, $\coniugiobaseraw(\LF_iN)$ appears to be orthogonal to the first eight vector fields but not to the second ones. 

To make things work, we need to extend not only $\LF_i$, but also the D conjugation. To this aim, the~elements $s^\alpha\in\RR^{16^2}$ are split into $(x^\alpha,y^\alpha)$ where $x^\alpha,y^\alpha\in\RR^{16^2/2}$, and the conjugation $\coniugiobase{2}$ is defined on $\RR^{16^2}$ using Formula~\eqref{star}:
\begin{equation}\label{eq:coniugiobase2}
\coniugiobase{2}:((x^1,y^1),(x^2,y^2))\longrightarrow((x^1,-y^1),(x^2,-y^2)).
\end{equation}

The additional vector field is then $\coniugiobaseraw(\coniugiobase{2}(\LF_iN))$, and the following theorem applies.
\begin{Theorem}
A maximal orthonormal system of tangent vector fields on $S^{2\cdot 16^2-1}$ is given by the following $8\cdot 2+1$ vector fields:
\begin{equation}
\begin{split}
\J_{1}N\enspace,\enspace&\dots\enspace,\J_{8}N\enspace,\\
\coniugiobaseraw(\blocktext(\J_{1})N)\enspace,\enspace&\dots\enspace,\coniugiobaseraw(\blocktext(\J_{8})N)\enspace,\\
\coniugiobaseraw(\coniugiobase{2}&(\LF_iN)).
\end{split}
\end{equation}
\end{Theorem}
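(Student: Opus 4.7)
The plan is to check that the listed $17$ vectors are tangent to $S^{511}$, of unit length, and pairwise orthogonal. Maximality is then automatic from the Hurwitz--Radon count: the decomposition $512 = 2 \cdot 16^2$ gives $p = 1$, $q = 2$, hence $\sigma(512) = 2 + 16 - 1 = 17$.

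For the first $16$ vector fields $\J_\alpha N$ and $\coniugiobaseraw(\blocktext(\J_\alpha) N)$, the interpretation is that both are the diagonal extensions, in the sense of~\eqref{eq:blockwise}, of the $16$ vectors of Proposition~\ref{16} on $S^{255}$, applied separately to each of the two $\RR^{256}$-blocks in the splitting $\RR^{512} = \RR^{256} \oplus \RR^{256}$. Tangency, unit length, and mutual orthogonality of these $16$ then follow at once from Proposition~\ref{16}, since all three properties are preserved by diagonal extension.

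The content of the proof is in analyzing the seventeenth vector field $W \ug \coniugiobaseraw(\coniugiobase{2}(\LF_i N))$. Using $\LF_i N = (-s^2, s^1)$ together with the definitions of $\coniugiobase{2}$ in~\eqref{eq:coniugiobase2} and of $\coniugiobaseraw$ in~\eqref{star} as sign flips of appropriate ``$y$-halves'' at different nesting depths, an explicit coordinate form of $W$ makes both $\|W\|=1$ and the tangency $\langle W, N\rangle = 0$ immediate. Orthogonality of $W$ to each $\J_\alpha N$ is also direct: $\J_\alpha$ acts identically on each $\RR^{16}$-subblock, while $\LF_i$ swaps the two $\RR^{256}$-blocks with a sign change, so the two half-block contributions to the inner product cancel in matched pairs.

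The main obstacle is the orthogonality $\langle W,\, \coniugiobaseraw(\blocktext(\J_\alpha) N)\rangle = 0$ with the level-$2$ vector fields, which is exactly the failure pointed out in the paragraph preceding the theorem: the naive candidate $\coniugiobaseraw(\LF_i N)$ is \emph{not} orthogonal to these, and the extra conjugation $\coniugiobase{2}$ is inserted precisely to introduce the sign corrections that restore orthogonality. I would check this by expanding in sedenionic coordinates $s^\alpha = (x^\alpha, y^\alpha) \in \OO^2$, tracking carefully the signs produced by $\coniugiobaseraw$, $\coniugiobase{2}$, $\LF_i$, and by the sign-permutation matrix $\blocktext(\J_\alpha)$; the inner product then reduces to a finite sum of octonionic pairings, and, just as in the proof of Proposition~\ref{16}, the identity $\Re(h h' h'') = \Re(h' h'' h)$ for $h, h', h'' \in \HH$ forces these pairings to cancel in matched groups, yielding zero.
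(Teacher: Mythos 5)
Your outline is indeed the intended route: the paper states this theorem without proof (the verification is deferred to the linear-algebra formalism of \cite{PaPSMS}), and the argument is exactly what you describe --- direct verification of tangency and pairwise orthogonality of the seventeen fields, with maximality coming for free from the Hurwitz--Radon count ($m=2\cdot 16^2$ gives $p=1$, $q=2$, hence $\sigma(m)=2+16-1=17$). Your reading of the first sixteen fields as diagonal extensions, block by block on $\RR^{512}=\RR^{256}\oplus\RR^{256}$, of the fields of Proposition~\ref{16} is correct (the identities proved there are polynomial in the coordinates, so they persist under diagonal extension), and your checks that $W=\coniugiobaseraw(\coniugiobase{2}(\LF_iN))$ is unit, tangent, and orthogonal to the level-$1$ fields go through.

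The gap is at the one step on which the statement actually hinges: the orthogonality $\langle W,\coniugiobaseraw(\blocktext(\J_\alpha)N)\rangle=0$ is only announced, and the mechanism you invoke for it is not the one that operates. Both $W$ and the level-$2$ fields are signed permutations of the coordinates of $N$ (no octonionic products of coordinates occur in this pairing), so the identity $\Re(hh'h'')=\Re(h'h''h)$ --- which is what settles the level-$1$ versus level-$2$ orthogonality in Proposition~\ref{16} --- plays no role here; what does the work is a parity argument. Write $N=(S^1,S^2)$ with $S^1,S^2\in\RR^{256}$, let $\sigma$, $\tau$ denote the restrictions to one $\RR^{256}$ block of $\coniugiobase{2}$ and $\coniugiobaseraw$ respectively, and let $B_\alpha=\blocktext(\J_\alpha)$ on that block. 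Then $W=(-\tau\sigma S^2,\tau\sigma S^1)$ and $\coniugiobaseraw(\blocktext(\J_\alpha)N)=(\tau B_\alpha S^1,\tau B_\alpha S^2)$, so, $\tau$ being orthogonal and $B_\alpha$ skew, the inner product equals $(S^1)^T(\sigma B_\alpha+B_\alpha\sigma)\,S^2$. It vanishes identically because $\sigma$ \emph{anticommutes} with $B_\alpha$: the signed permutation $\J_\alpha=\I_\alpha\I_9$, $\alpha\le 8$, interchanges the first eight and the last eight sedenion slots of the block, while $\sigma$ is $+\Id$ on the former and $-\Id$ on the latter. (For the naive candidate $\coniugiobaseraw(\LF_iN)$ the same computation yields $(S^1)^T(B_\alpha-B_\alpha^T)S^2=2(S^1)^TB_\alpha S^2\neq 0$ in general, which is precisely why $\coniugiobase{2}$ must be inserted.) With this observation your plan closes completely; without it, the crucial orthogonality remains an assertion.
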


\section{Back to the octonionic Hopf fibration}\label{Hopf}

As we saw in the previous Section, $S^{15}$ is the lowest dimensional sphere with more than seven linearly independent vector fields. There are further features that distinguish $S^{15}$ among spheres. For~example, $S^{15}$ is the only sphere that admits three homogeneous Einstein metrics, and it is the only sphere that appears as a regular orbit in three cohomogeneity actions on projective spaces, namely, of $\SU{8}$, $\Sp{4}$, and $\Spin{9}$ on $\CP{8}$, $\HP{4}$, and $\OP{2}$, respectively (see ~Refs. \cite{BesEiM,KolCHC}). All of these features can be traced back to the transitive action of $\Spin{9}$ on the octonionic Hopf fibration $S^{15}\rightarrow S^{8}$. The following theorem applies.

\begin{Theorem}\label{te:A}{}
Any global vector field on $S^{15}$ which is tangent to the fibers of the octonionic Hopf fibration $S^{15} \rightarrow S^{8}$ has at least one zero.
\end{Theorem}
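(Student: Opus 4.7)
The plan is to argue by contradiction. Suppose $X$ is a nowhere-zero vertical vector field on $S^{15}$; after rescaling, $|X|=1$, so that $X$ is a section of the unit sphere bundle $S(V)\to S^{15}$ of the rank-seven vertical bundle $V$. Using $S^{15}=\Spin{9}/\Spin{7}$ together with the Friedrich formalism recalled in Section~\ref{Preliminaries}, the vertical bundle is identified as the homogeneous bundle $V\cong\Spin{9}\times_{\Spin{7}}\RR^7$, where $\Spin{7}$ acts via its irreducible vector representation; equivalently, for $p\in S^{15}$ with Hopf image $v_p\in S^8\subset\RR^9$, the subspace $V_p\subset T_pS^{15}$ is the orthogonal complement of $p$ inside the $+1$-eigenspace of $\I_{v_p}:\RR^{16}\to\RR^{16}$. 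A nowhere-zero unit section of $V$ thus corresponds to a reduction of structure group from $\Spin{7}$ to the stabilizer $\Spin{6}$ of a unit vector in $\RR^7$, i.e.\ to a section of the $S^6$-bundle $\Spin{9}/\Spin{6}\to S^{15}$ with fiber $\Spin{7}/\Spin{6}=S^6$.

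I would then attack the section problem by obstruction theory. The primary obstruction is the Euler class $e(V)\in H^7(S^{15};\ZZ)=0$, which vanishes for dimensional reasons; moreover, since the reduced cohomology of $S^{15}$ is concentrated in degree $15$, every higher obstruction $H^{k+1}(S^{15};\pi_k(S^6))$ is automatically zero in the range $6\le k\le 13$. The only potentially non-vanishing obstruction therefore lies in the top group $H^{15}(S^{15};\pi_{14}(S^6))\cong\pi_{14}(S^6)$, and it coincides with the image of the classifying element $[V]\in\pi_{14}(\SO{7})$ under the connecting map $\pi_{14}(\SO{7})\to\pi_{14}(S^6)$ of the fibration $\SO{6}\to\SO{7}\to S^6$. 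The core of the argument is then to prove that this top obstruction is actually non-zero.

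For the non-vanishing, the key leverage is that $S^{15}\to S^8$ is the unit sphere bundle of the octonionic Hopf $\RR^8$-bundle $E\to S^8$, whose Euler class $e(E)\in H^8(S^8;\ZZ)$ is a generator (equivalently, the Hopf invariant of $h:S^{15}\to S^8$ is $\pm1$). Over $S^{15}$ one has a canonical splitting $\pi^*E\cong\epsilon^1\oplus V$ in which the trivial summand is the tautological radial line, so the hypothesis that $V$ admits a nowhere-zero section is equivalent to a further refinement $\pi^*E\cong\epsilon^2\oplus V'$ with $V'$ of rank $6$. Combining this with $TS^{15}=V\oplus\pi^*TS^8$ and the stable trivializations $TS^{15}\oplus\epsilon^1\cong\epsilon^{16}$, $TS^8\oplus\epsilon^1\cong\epsilon^9$, the existence of such a refinement translates into a condition incompatible with $e(E)$ being a generator, producing the desired contradiction. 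The main obstacle is precisely this final compatibility check; a more conceptual path, which is the one developed in \cite{OPPSGO}, interprets a hypothetical global unit vertical field as a continuous ``right multiplication by an imaginary unit'' on the octonionic fibers, a structure blocked by the non-associativity of $\OO$ together with the non-triviality of the Hopf invariant.
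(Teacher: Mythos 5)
Your setup is fine as far as it goes (the identification of the vertical bundle $V$ with the homogeneous rank-$7$ bundle attached to the vector representation of $\Spin{7}$, and the observation that all obstructions to a unit section vanish except possibly the top one in $H^{15}(S^{15};\pi_{14}(S^6))$), but the proof is not complete: the whole content of the theorem is concentrated in the one step you leave open, namely the non-vanishing of that top obstruction. The route you sketch for it cannot work as written. All characteristic classes of $E$, $V$, $V'$ or $\pi^*TS^8$ live in degrees $\leq 8$, while the reduced cohomology of $S^{15}$ sits only in degree $15$; in particular $\pi^*e(E)=0$, so no identity among Euler or Pontrjagin classes over $S^{15}$ can be ``incompatible'' with a splitting $\pi^*E\cong\epsilon^2\oplus V'$, and the fact that $e(E)$ generates $H^8(S^8;\ZZ)$ is invisible after pulling back. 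Detecting the obstruction would require genuine homotopy-theoretic input (a computation in $\pi_{14}$ of $S^6$ or of a Stiefel manifold $V_2(\RR^8)$, or a transfer of the section problem back to $S^8$ using the Hopf invariant), which you neither carry out nor cite; and the ``conceptual path'' you attribute to Ref.~\cite{OPPSGO} (non-associativity blocking a fiberwise right multiplication) is not what is actually done there.

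For comparison, the paper's argument is much more elementary and bypasses obstruction theory entirely. The fibers of the Hopf fibration are characterized, inside $\RR^{16}$, as being orthogonal to the nine fields $\I_1N,\dots,\I_9N$, where $N$ is the outward unit normal of $S^{15}$. Hence a vertical field $X$ satisfies $\langle X,\I_\alpha N\rangle=0$, i.e.\ $\langle \I_\alpha X,N\rangle=0$ for all $\alpha$, so the nine fields $\I_1X,\dots,\I_9X$ are tangent to $S^{15}$; since the $\I_\alpha$ are anticommuting orthogonal involutions, these nine fields are pairwise orthogonal and, if $X$ were nowhere zero, nowhere zero. This contradicts the Hurwitz--Radon/Adams bound $\sigma(16)=8$ for linearly independent vector fields on $S^{15}$. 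Thus the only external input is the classical vector-field theorem, whereas your plan, as it stands, replaces the theorem by an obstruction computation of essentially the same depth and leaves it unproved. If you want to salvage your approach, you must either perform that $\pi_{14}$ computation or find an argument that, like the paper's, converts a vertical section into too many tangent vector fields on $S^{15}$.
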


\begin{proof}\hypertarget{pr:proofA}{}
For any $(x,y)\in S^{15}\subset\OO^2=\RR^{16}$, we already denoted by 
\[
N = (x,y) = (x_1,\dots,x_8,y_1,\dots,y_8) 
\]
the (outward) unit normal vector field of $S^{15}$ in $\RR^{16}$. After identifying the tangent spaces $T_{(x,y)}(\RR^{16})$ with $\RR^{16}$, it can be noted that
the $\I_1,\dots,\I_9$ involutions define the following 
sections of~$T(\RR^{16})_{\vert_{S^{15}}}$:
{ \begin{myequation2}\label{spin9B}
\begin{split}
\I_1 N &=(y_1,y_2,y_3,y_4,y_5,y_6,y_7,y_8,x_1,x_2,x_3,x_4,x_5,x_6,x_7,x_8),\\
\I_2 N &=(y_2,-y_1,-y_4,y_3,-y_6,y_5,y_8,-y_7,-x_2,x_1,x_4,-x_3,x_6,-x_5,-x_8,x_7),\\
\I_3 N &=(y_3,y_4,-y_1,-y_2,-y_7,-y_8,y_5,y_6,-x_3,-x_4,x_1,x_2,x_7,x_8,-x_5,-x_6),\\
\I_4 N &=(y_4,-y_3,y_2,-y_1,-y_8,y_7,-y_6,y_5,-x_4,x_3,-x_2,x_1,x_8,-x_7,x_6,-x_5),\\
\I_5 N &=(y_5,y_6,y_7,y_8,-y_1,-y_2,-y_3,-y_4,-x_5,-x_6,-x_7,-x_8,x_1,x_2,x_3,x_4),\\
\I_6 N &=(y_6,-y_5,y_8,-y_7,y_2,-y_1,y_4,-y_3,-x_6,x_5,-x_8,x_7,-x_2,x_1,-x_4,x_3),\\
\I_7 N &=(y_7,-y_8,-y_5,y_6,y_3,-y_4,-y_1,y_2,-x_7,x_8,x_5,-x_6,-x_3,x_4,x_1,-x_2),\\
\I_8 N &=(y_8,y_7,-y_6,-y_5,y_4,y_3,-y_2,-y_1,-x_8,-x_7,x_6,x_5,-x_4,-x_3,x_2,x_1),\\
\I_9 N &=(x_1,x_2,x_3,x_4,x_5,x_6,x_7,x_8,-y_1,-y_2,-y_3,-y_4,-y_5,-y_6,-y_7,-y_8).\\
\end{split}
\end{myequation2}
}

Their span,
\[
EN\ug<\I_1 N,\dots,\I_9 N>,
\]
is, at any point, a nine-plane in $\RR^{16}$ that is not tangential to the $S^{15}$ sphere. Observe that the nine-plane $EN$ is invariant under $\Spin{9}$. This is certainly the case for the single vector field $N$, since \mbox{$\Spin{9}\subset\SO{16}$}. On the other hand, the endomorphisms $\I_\alpha$ rotate under the $\Spin{9}$ action inside their $E^9 \subset \End{\RR^{16}}$ vector bundle.

Next, note that $EN$ contains $N$:
\[
N = \lambda_1 \I_1 N + \lambda_2 \I_2 N + \dots + \lambda_8 \I_8 N + \lambda_9 \I_9 N,
\]
where the coefficients $\lambda_\alpha$ are computed from (\ref{spin9B}) in terms of the inner products of vectors,
\[
\vec x =(x_1,\dots, x_8), \; \vec y=(y_1, \dots, y_8) \in \RR^8,
\]
and of the right translations, $R_i, \dots, R_h$, as follows:
\[
\lambda_1 = 2 \vec x \cdot \vec y, \qquad \lambda_2 =-2 \vec x \cdot \vec{R_i y}, \qquad \dots, \qquad \lambda_8 =-2 \vec x \cdot \vec{R_h y}, \qquad \lambda_9 = \vert \vec x \vert^2 - \vert \vec y \vert^2.
\]

In particular, at points with $\vec x = \vec 0$, that is, on the octonionic line $l_\infty$, the $\I_1 N, \dots, \I_9 N$ vector fields are orthogonal to the $S^7 \subset l_\infty$ unit sphere. The latter is the fiber of the Hopf fibration $S^{15} \rightarrow S^8$ over the north pole ($(0, \dots, 0,1) \in S^8$), and the mentioned orthogonality of this fiber ($S^7$) is immediate from~(\ref{spin9B}) for $\I_1 N, \dots, \I_8 N.$ 

Also, at these points, we have  $\I_9 N=N$, so  $\I_9N$ is orthogonal to the $S^7$ fiber as well. Now, the~invariance of the octonionic Hopf fibration under $\Spin{9}$ shows that all its fibers are characterized as being orthogonal to the vector fields $\I_1 N, \dots, \I_9 N$ in $\RR^{16}$. 

Now, assume that $X$ is a vertical vector field of $S^{15} \rightarrow S^8$. From the previous characterization, we~have the following orthogonality relations in $\RR^{16}$:
\[
\langle X, \I_\alpha N \rangle =0, \qquad\text{for }\alpha=1, \dots, 9,
\]
and it follows that  $\langle \I_\alpha X, N \rangle=0$. However, from the definition of a $\Spin{9}$ structure, it can be observed that if $\alpha \neq \beta$, then $\langle \I_\alpha X, \I_\beta X \rangle =0$. Thus, if $X$ is a nowhere zero vertical vector field, we~obtain, in this way, nine pairwise orthogonal vector fields ($\I_1 X, \dots, \I_9 X$) that are all tangent to $S^{15}$. However, $S^{15}$ is known to admit, at most, eight linearly independent vector fields. Thus, $X$ cannot be vertical and nowhere zero.
\end{proof}
One gets, as a consequence, the following alternative proof of a result, established in~Ref. \cite{LoVHFO}.

\begin{Corollary} \label{cor}
The octonionic Hopf fibration $S^{15} \rightarrow S^8$ does not admit any $S^1$ subfibration.
\end{Corollary}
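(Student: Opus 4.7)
The plan is to derive the corollary from Theorem A by contradiction. Suppose there exists an $S^1$ subfibration of the octonionic Hopf fibration, i.e.\ a locally trivial fibration $S^1 \hookrightarrow S^{15} \to X$ whose circle fibers are contained in the $S^7$-fibers of $S^{15} \to S^8$ (equivalently, $X$ admits a fibration $X \to S^8$ with fiber $S^7/S^1$ such that the composition $S^{15}\to X \to S^8$ is the octonionic Hopf fibration). The circle fibers form a one-dimensional foliation $\mathcal F$ of $S^{15}$, and its tangent line subbundle $T\mathcal F \subset TS^{15}$ lies inside the vertical bundle of $S^{15}\to S^8$.

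The key step is to extract from $T\mathcal F$ a global nowhere-vanishing vector field. Since $S^{15}$ is simply connected, $H^1(S^{15};\mathbb Z/2)=0$, so every real line bundle on $S^{15}$ is trivial; in particular $T\mathcal F$ admits a nowhere-vanishing global section $X$. By construction $X$ is tangent to the circles of $\mathcal F$, hence to the $S^7$-fibers of the octonionic Hopf fibration, so $X$ is a nowhere-vanishing vertical vector field on $S^{15}\to S^8$. This directly contradicts Theorem~\ref{te:A}.

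The main thing to be careful about is the meaning of ``subfibration'': one has to ensure that the $S^1$-leaves do organize into a genuine one-dimensional subbundle of the vertical distribution, and that this subbundle can be oriented globally. Both points are essentially automatic, the first from the definition of a subfibration and the second from the vanishing of $H^1(S^{15};\mathbb Z/2)$. Once these technicalities are in place, the proof reduces to the single sentence ``a vertical unit vector field cannot exist by Theorem~\ref{te:A}''.
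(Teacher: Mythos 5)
Your argument is correct and is essentially identical to the paper's proof: both pass from the $S^1$ subfibration to a real line subbundle $L$ of the vertical bundle, use $H^1(S^{15};\ZZ_2)=0$ (vanishing of $w_1(L)$) to trivialize it, and obtain a nowhere-zero vertical vector field contradicting Theorem~\ref{te:A}. No gaps to report.
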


\begin{proof}
In fact, any $S^1$ subfibration would give rise to a real line sub-bundle ($L\subset T_{\text{vert}}(S^{15})$) of the vertical sub-bundle of $T(S^{15})$. This line bundle ($L$) is necessarily trivial, due to the vanishing of its first Stiefel--Whitney class, $w_1(L)\in H^1(S^{15};\ZZ_2)=0$. It follows that $L$ would admit a nowhere zero section and thus, a global, vertical, nowhere zero vector field.
\end{proof}

\section{Locally conformally parallel $\Spin{9}$ manifolds}\label{se:lcp}

Let $G \subset \SO{d}$. Recall that a {\em locally conformally parallel $G$-structure} on a manifold $M^d$ is the datum of a Riemannian metric ($g$) on $M,$ a covering ${\mathcal U}=\{U_\alpha\}_{\alpha \in A}$ of $M,$ and for each $\alpha \in A$, a metric $g_\alpha$ defined on $U_\alpha$ which has holonomy contained in $G$ such that the restriction of $g$ to each $U_\alpha$ is conformal to $g_\alpha$:

$$g_{\vert U_\alpha}=e^{f_\alpha}g_\alpha$$
for some smooth map ($f_\alpha$) defined on $U_\alpha.$

Some of the possible cases here are

\begin{itemize}
\item $G=\U{n}$, where we have the {\em locally conformally K\"ahler metrics};

\item $G=\Sp{n}\cdot \Sp{1}$, yielding the {\em locally conformally quaternion K\"ahler metrics};

\item $G=\Spin{9},$ which is the case we are dealing with.
\end{itemize}

In any of the cases above, for each overlapping $U_\alpha\cap U_\beta$, the functions $f_\alpha, f_\beta$ differ by a constant: 
$$f_\alpha-f_\beta=ct_{\alpha, \beta}\,\, \text{on}\,\, U_\alpha\cap U_\beta.$$

This implies that  $df_\alpha=df_\beta$ on $U_\alpha \cap U_\beta\neq \emptyset$, hence defining a global, closed 1-form that is usually denoted by $\theta$ and called {\em the Lee form}.
Its metric dual with respect to $g$ is denoted by $N$ as
$$ N=\theta^{\sharp}$$ 
and is called  {\em the Lee vector field.}
 
The $G=\U{n}$ case of locally conformally K\"ahler metrics has been extensively studied in the last decades (see, for instance, Ref. \cite{DrOLCK}). 

When $G$ is chosen to be  $\Sp{n}$ or  $\Sp{n}\cdot\Sp{1}$, there are close relations to $3$-Sasakian geometry (see~Ref. \cite{OrPLCK} or the surveys~\cite{BoGTSM,CaPEWG}). Finally, locally conformally parallel $\Gtwo$ and $\Spin{7}$ structures were studied in~Ref. \cite{IPPLCP}, and they relate to nearly parallel $\SU{3}$ and $\Gtwo$ geometries, respectively. 

As mentioned in the Introduction, the holonomy of $\Spin{9}$ is only possible on manifolds that are either flat or locally isometric to $\OP{2}$ or to the hyperbolic Cayley plane $\OH{2}$. Weakened holonomy conditions give rise to several classes of $\Spin{9}$ structures (cf.~Ref. \cite{FriWSS} and Section \ref{Preliminaries}). One of these classes is that of \emph{vectorial type structures} (see~Refs. \cite{AgFGSV} and~\cite{FriWSS} (p.~148)). According to the following Definition and the following Remark, this class fits into the locally conformally parallel scheme.

\begin{Definition}\cite{AgFGSV}  \label{vectorial type}
A $\Spin{9}$ structure is of the \emph{vectorial type} if $\Gamma$ lives in $P_0$.
\end{Definition}

\begin{Remark}\label{re:victor}
In~Refs. \cite{FriWSS,AgFGSV}, the class of locally conformally parallel $\Spin{9}$ structures has been identified and studied under the name $\Spin{9}$ {\em structures of vectorial type}. 
Now, we outline now a proof that, for~$\Spin{9}$ structures, the vectorial type is equivalent to the locally conformally parallel type. As already mentioned in Section \ref{Preliminaries}, the splitting of the Levi--Civita connection, viewed as a connection in the principal bundle of orthonormal frames on $M$ is
\[
Z=Z^*\oplus\Gamma
\]
where $Z^*$ is the connection of $\Spin{9}$-frames in the induced bundle, and $\Gamma$ is its orthogonal complement. Thus, $\Gamma$ is a 1-form with values in the orthogonal complement $\lie{m}$ in the splitting of Lie algebras $\lieso{16}=\liespin{9}\oplus\lie{m}$, and under the identification $\Lambda^2_{84} = \lie{m} =\Lambda^3(E^9)$ (cf. the beginning of Section \ref{Canonical differential forms}), $\Gamma$ can be seen as a 1-form with values in $\Lambda^3(E^9)$. Under the action of $\Spin{9}$, the space $\Lambda^1(M)\otimes\Lambda^3(E)$ decomposes as a direct sum of four irreducible components:
\[
\Lambda^1(M)\otimes\Lambda^3(E)=P_0\oplus P_1\oplus P_2\oplus P_3,
\]
and looking at all the possible direct sums,  this yields 16 types of $\Spin{9}$ structure. Component $P_0$ identifies with $\Lambda^1(M)$, and thus, with the component $\mathcal W_4$ in Formula \eqref{W}.
\end{Remark}

Now, let $(M^{16},g)$ be a Riemannian manifold endowed with a $\Spin{9}$ structure of the vectorial type. Let $\Gamma$ be as above, and let $\snform$ be its $\Spin{9}$-invariant 8-form. Now, $\Gamma=0$ implies that the holonomy of $M$ is contained in $\Spin{9}$ (cf.~Ref. \cite{FriWSS} (p. 21)).

From~Ref. \cite{AgFGSV} (p. 5), we know that the following relations hold:
\begin{equation}\label{eq:friedrich}
d\snform=\theta\wedge\snform,\qquad d\theta=0.
\end{equation}

Let $(M,\tilde{g})$ be the Riemannian universal cover of $(M,g)$, and let $\tilde{\snform}$, $\tilde{\theta}$ be the lifts of $\snform$ and $\theta$ respectively. Then, relations ~\eqref{eq:friedrich} also hold for $\tilde{\snform}$ and $\tilde{\theta}$. Since $\tilde{M}$ is simply connected, $\tilde{\theta}=df$ for some $f:\tilde{M}\rightarrow\RR$. Then, by defining $g_0\ug e^{-f}\tilde{g}$ and $\snform_0\ug e^{-4f}\tilde{\snform}$, we have $d\snform_0=0$, that is, the $\theta$-factor of $\snform_0$ is zero. Hence, $g_0$ has holonomy contained in $\Spin{9}$, and on the other hand, it is locally conformal to $g$. Thus, $M$ can be covered by open subsets on which the metric is conformal to a metric with holonomy in $\Spin{9}$.

The conformal flatness of metrics with $\Spin{9}$ holonomy has the following consequences (cf.~Ref.~\cite{OPPSGO} for the proofs).
\begin{Theorem}\hypertarget{te:B}{}
Let $M^{16}$ be a compact manifold that is equipped with a locally, non globally, conformally parallel $\Spin{9}$ metric $g$. Then,
\begin{enumerate}
\item\label{te:st1} The Riemannian universal covering $(\tilde{M},\tilde{g})$ of $M$ is conformally equivalent to the euclidean ${\mathbb R}^{16}\setminus \{0\}$, the~Riemannian cone over $S^{15}$, and $M$ is locally isometric to $S^{15} \times \RR$ up to its homotheties.
\item\label{te:st2} $M$ is equipped with a canonical $8$-dimensional foliation.
\item\label{te:st3} If all the leaves of $\mathcal F$ are compact, then M fibers over the orbifold $\mathcal O^8$ are finitely covered by $S^8$, and all fibers are finitely covered by $S^7 \times S^1$.
\end{enumerate}
\end{Theorem}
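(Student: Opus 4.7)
The starting point is the preceding Remark, which shows that on the Riemannian universal cover $(\tilde{M},\tilde{g})$ the lifted Lee form is exact, $\tilde\theta=df$, and the conformally rescaled metric $g_0\ug e^{-f}\tilde g$ has holonomy contained in $\Spin{9}$. By Alekseevsky's theorem, $(\tilde M,g_0)$ is then either flat, or locally isometric to $\OP{2}$, or locally isometric to $\OH{2}$. The plan is first to rule out the last two cases. The deck group $\pi_1(M)$ acts on $(\tilde M,\tilde g)$ by isometries, and hence on $(\tilde M,g_0)$ by homotheties: for $\gamma\in\pi_1(M)$, the closed form $\tilde\theta$ is preserved, so $\gamma^\ast f-f$ is a locally constant function $c_\gamma$, and $\gamma^\ast g_0=e^{-c_\gamma}g_0$. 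A cocompact such action with compact Hausdorff quotient forces some $c_\gamma\neq 0$, since otherwise $\pi_1(M)$ acts by $g_0$-isometries and $g$ would be globally conformal to $g_0$, contradicting the non-global hypothesis. But $\OP{2}$ is compact and simply connected, so $\tilde M=M=\OP{2}$ gives the parallel structure directly, while $\OH{2}$ is an irreducible negatively curved symmetric space and any homothety of such a space is necessarily an isometry (its nonzero parallel scalar curvature scales only trivially). Hence $(\tilde M,g_0)$ must be flat.

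With $(\tilde M,g_0)$ flat, simply connected, and carrying a cocompact action by Euclidean similarities, a classical argument identifies it with $\RR^{16}\setminus\{0\}$ (a proper Euclidean domain cannot carry a cocompact similarity action, and the fixed point must be removed for freeness), and writing this as the Riemannian cone $(\RR_+\times S^{15},\,dr^2+r^2 g_{S^{15}})$ and setting $t=\log r$ converts $\tilde g=e^f g_0$ into a metric locally homothetic to the cylinder $\RR\times S^{15}$, giving statement~(1). For statement~(2), I would combine the $7$-dimensional distribution tangent to the fibers of the octonionic Hopf fibration $S^{15}\to\OP{1}$ with the $1$-dimensional radial (Lee) direction on $\RR^{16}\setminus\{0\}$, obtaining an involutive rank-$8$ distribution on $\tilde M$ with leaves $S^{7}\times\RR$. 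This distribution descends to a canonical foliation $\mathcal F$ on $M$ because $\pi_1(M)$ acts through $\Spin{9}\subset\SO{16}$ on the $S^{15}$ factor (and $\Spin{9}$ is exactly the symmetry group of the octonionic Hopf fibration) and through dilations preserving the radial direction.

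For statement~(3), under the hypothesis that every leaf of $\mathcal F$ is compact, each leaf is a compact quotient of $S^{7}\times\RR$ by a discrete group of homotheties, hence finitely covered by $S^7\times S^1$. The standard leaf-space theory for Riemannian foliations with compact leaves then endows $\mathcal O^8\ug M/\mathcal F$ with a Satake orbifold structure and makes $M\to\mathcal O^8$ an orbifold fibration. Since on the cover the leaf space of $\RR\times S^{15}$ modulo the combined Hopf-plus-radial foliation is precisely $\OP{1}=S^8$, the orbifold $\mathcal O^8$ is a quotient of $S^8$ by the finite image in $\Spin{9}/\Spin{7}\cong S^8$-isometries of $\pi_1(M)$, hence finitely covered by $S^8$.

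The step I expect to be the main obstacle is the rigorous exclusion of the $\OH{2}$ case in the first paragraph: although the rigidity of negatively curved irreducible symmetric spaces makes the conclusion unsurprising, one must carefully distinguish between genuine homotheties of the complete $\OH{2}$-metric (which scale the parallel nonzero scalar curvature and are therefore forced to be isometries) and the much richer conformal automorphisms of $\OH{2}$, and then convert this rigidity into a bona fide contradiction with the cocompactness of the non-isometric deck action. A secondary subtlety is the identification $\tilde M\cong\RR^{16}\setminus\{0\}$ rather than a proper open subset, which relies on the completeness of $\tilde g$ as a cover of the compact $(M,g)$ together with the analysis of the conformal factor near the origin.
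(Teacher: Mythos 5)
Your overall skeleton is the intended one (the paper itself defers the proof to \cite{OPPSGO}): pass to the universal cover in the gauge $g_0=e^{-f}\tilde g$ with holonomy in $\Spin{9}$, invoke Alekseevsky, kill the symmetric models by the incompatibility of a strict homothety with constant nonzero scalar curvature, identify the flat picture with the cone over $S^{15}$, take the foliation by punctured octonionic lines, and pass to the leaf space. The genuine gap is precisely the step you dismiss as ``a classical argument'': the identification $\tilde M\cong\RR^{16}\setminus\{0\}$. At that stage $(\tilde M,g_0)$ is only a simply connected, in general \emph{incomplete}, flat manifold; what exists is a developing map $\mathrm{dev}\colon\tilde M\to\RR^{16}$, a local isometry equivariant for a holonomy representation of $\pi_1(M)$ into the similarity group, and the whole difficulty is to show that $\mathrm{dev}$ is injective with image exactly $\RR^{16}$ minus the common fixed point of the strict similarities. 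Your parenthetical justification is not only incomplete, it is false as stated: $\RR^{16}\setminus\{0\}$ is itself a proper Euclidean domain carrying a cocompact similarity action --- that is the very conclusion being proved. The missing ingredient is D. Fried's theorem on closed similarity manifolds, or equivalently the structure theory of compact vectorial-type (closed flat Weyl) geometries used in \cite{OPPSGO} following \cite{AgFGSV} and the $\Gtwo$/$\Spin{7}$ treatment of \cite{IPPLCP}. Relatedly, you misplace the main obstacle: the $\OH{2}$ exclusion is already settled completely by your own scalar-curvature scaling argument (which, note, also disposes of $\OP{2}$ without assuming completeness of $g_0$ --- your separate ``$\tilde M=M=\OP{2}$'' argument tacitly assumes $g_0$ complete, which you do not know).

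A second, smaller gap concerns the clause of statement (1) that $M$ is locally isometric, up to homothety, to $S^{15}\times\RR$. Setting $t=\log r$ gives $\tilde g=e^{f+2t}\,(dt^2+g_{S^{15}})$, which is only \emph{conformal} to the cylinder; local isometry up to homothety requires controlling the conformal factor, i.e.\ showing $e^{f}$ is locally proportional to $r^{-2}$, which amounts to the Lee form being parallel (of constant length) in the appropriate Gauduchon gauge --- this is tied to the fibration over $S^1$ of Theorem C and does not follow from the cone picture alone. Your treatment of (2) and (3) is consistent with the intended argument: the rank-$8$ distribution is tangent to the punctured octonionic lines (the fibers of $\RR^{16}\setminus\{0\}\to\OP{1}$), and deck transformations have linear part in $\Spin{9}$ because they preserve the lifted structure, hence act on the leaf space $S^{8}$ through isometries; but in (3) you still owe the finiteness of the image of $\pi_1(M)$ in $\Isom{S^8}$, which is exactly where the compact-leaves hypothesis must be used rather than merely invoked.
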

\begin{Theorem}\hypertarget{te:C}{}
Let $(M,g)$ be a compact Riemannian manifold. Then, $(M,g)$ is locally, non globally, conformally parallel $\Spin{9}$ if and only if the following three properties are satisfied:
\begin{enumerate}
\item\label{it:structure1} $M$ is the total space of a fiber bundle $
M\stackrel{\pi}{\longrightarrow} S^1_r,$
where $\pi$ is a Riemannian submersion over a circle of radius $r$.
\item\label{it:structure2} The fibers of $\pi$ are spherical space forms ($S^{15}/K$), where $K$ is a finite subgroup of $\Spin{9}$.
\item\label{it:structure3} The structure group of $\pi$ is contained in the normalizer of $K$ in $\Spin{9}$.
\end{enumerate}
\end{Theorem}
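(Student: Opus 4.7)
The plan is to prove the two implications separately. The forward implication builds on Theorem B, while the converse is an explicit construction of a locally conformally parallel $\Spin{9}$ metric on the total space of a bundle satisfying (1)--(3).

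For the ``only if'' direction, I would start from the conclusions of Theorem B, which identify the Riemannian universal cover $(\tilde M,\tilde g)$, up to conformal change, with the Euclidean cone $\RR^{16}\setminus\{0\}$. The Lee form $\tilde\theta=df$ is exact on $\tilde M$ by simple connectivity, and the metric $g_0=e^{-f}\tilde g$ has holonomy contained in $\Spin{9}$; being also conformally flat, it must in fact be the flat metric. The deck transformation group $\Gamma=\pi_1(M)$ preserves both $\tilde g$ and the $\Spin{9}$ structure, hence each $\gamma\in\Gamma$ acts on $\RR^{16}\setminus\{0\}$ as a conformal similarity preserving the standard $\Spin{9}$ structure. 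Such a similarity must fix $0$ (otherwise it would not preserve $\RR^{16}\setminus\{0\}$) and has the form $x\mapsto\lambda(\gamma)\,A(\gamma)\,x$ with $A(\gamma)\in\Spin{9}\subset\SO{16}$ and $\lambda(\gamma)>0$; in cylindrical coordinates $x=e^t p$, $p\in S^{15}$, it reads
\[
(p,t)\longmapsto(A(\gamma)\cdot p,\;t+\log\lambda(\gamma)).
\]

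The next step is to study the homomorphism $\tau\colon\Gamma\to\RR$, $\gamma\mapsto\log\lambda(\gamma)$. If $\tau\equiv 0$ the quotient inherits $\RR$ as a factor and cannot be compact; hence $\tau$ is nontrivial, and the hypothesis ``locally, non globally'' ensures this case occurs. I would argue that $\tau(\Gamma)$ is discrete in $\RR$: if $\tau(\Gamma)$ were dense then, using $C=S^{15}\times[0,1]$ as a test compact set, infinitely many $\gamma$ would send $C$ into itself, contradicting the proper discontinuity of the deck action. Hence $\tau(\Gamma)=\alpha\ZZ$ for some $\alpha>0$, and $K\ug\ker\tau$ acts only on the $S^{15}$ factor through $\Spin{9}$, properly discontinuously; since $S^{15}$ is compact and the action is by isometries in a compact group, $K$ is finite. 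One then obtains a short exact sequence $1\to K\to\Gamma\to\ZZ\to 1$, and the composition
\[
\tilde M=S^{15}\times\RR\longrightarrow\RR\longrightarrow\RR/\alpha\ZZ=S^1_r
\]
descends to the desired Riemannian submersion $\pi\colon M\to S^1_r$ with fibers $S^{15}/K$. Finally, any lift $\gamma_0\in\Gamma$ of the generator $1\in\ZZ$ conjugates $K$ to itself (as $K$ is normal in $\Gamma$), so its image $A(\gamma_0)\in\Spin{9}$ lies in the normalizer $N_{\Spin{9}}(K)$, yielding (3).

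For the converse, I would reverse the construction. Given $(M,\pi,K)$ as in (1)--(3), choose a lift $A\in N_{\Spin{9}}(K)$ of the monodromy and form the group $\Gamma$ generated by $K$ (acting on the $S^{15}$ factor) and by $\gamma_0(p,t)=(A\cdot p,\,t+\alpha)$ on $\RR^{16}\setminus\{0\}\cong S^{15}\times\RR$. Then $M$ is diffeomorphic to $(\RR^{16}\setminus\{0\})/\Gamma$. The flat Euclidean metric $g_{\mathrm{flat}}$ on $\RR^{16}\setminus\{0\}$ is not $\Gamma$-invariant (the generator rescales it by $e^{2\alpha}$), but $g_0\ug g_{\mathrm{flat}}/|x|^2$ is invariant and descends to $M$. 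Since $g_{\mathrm{flat}}$ carries the standard $\Spin{9}$-parallel structure and is preserved by $\Gamma$ up to conformal factor, the descended metric is locally, but not globally, conformally parallel $\Spin{9}$, with Lee form descending from $-2\,dt$.

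The main obstacle I anticipate is the discreteness of $\tau(\Gamma)\subset\RR$: the sketch above gives a direct argument from proper discontinuity, but one must ensure that the translation part and the rotation part interact correctly, since a priori $\Gamma$ sits as a skew subgroup of $\Spin{9}\times\RR$ rather than a product, and the finiteness of $K$ must be extracted from the fact that even the full projection of $\Gamma$ into $\Spin{9}$ is only closed, not a priori finite on each fiber of $\tau$. A secondary subtlety is in the converse direction: one has to verify that the rank-$9$ bundle $E^9\subset\mathrm{End}(T\RR^{16})$ itself (and not only a $\Spin{9}$-reduction of the frame bundle) is preserved by every element of $\Gamma$, which is where hypothesis (3) is genuinely used.
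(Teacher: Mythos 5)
Your proof is correct in outline and follows the same route as the proof in the reference \cite{OPPSGO} to which the survey defers: Theorem B reduces the universal cover to the cone $\RR^{16}\setminus\{0\}$, the constancy of the conformal factor turns each deck transformation into a similarity $x\mapsto\lambda A x$ with $A\in\Spin{9}$, one extracts the exact sequence $1\to K\to\Gamma\to\alpha\ZZ\to1$ from discreteness of $\tau(\Gamma)$ and compactness of the slices $S^{15}\times\{t\}$, and the converse is the mapping-torus construction with the cylindrical metric $g_{\mathrm{flat}}/|x|^2$. The two subtleties you flag at the end (proper discontinuity forcing $\tau(\Gamma)$ discrete and $K$ finite, and hypothesis (3) being exactly what makes $E^9$ descend) are resolved exactly as you indicate.
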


\section{Clifford systems and Clifford structures}\label{Clifford}

The self dual anti-commuting involutions $\I_1, \dots , \I_9$ that define the standard $\Spin{9}$-structure on $\RR^{16}$ are an example of a {\em Clifford system}. The definition, formalized in 1981 by D. Ferus, H. Karcher, and~H. F. M\"unzner, in their study of isometric hypersurfaces of spheres \cite{FKMCNI}, is the following. 
\begin{Definition} A \emph{Clifford system} on the Euclidean vector space $\RR^N$ is the datum 
\[
C_m=(P_0,\dots , P_m)
\]
of a $(m+1)$-ple of symmetric endomorphisms $P_\alpha$ such that
\[
P_\alpha^2 = \; \Id \; \; \text{for all} \; \; \alpha, \qquad P_\alpha P_\beta = -P_\beta P_\alpha \; \; \text{for all} \; \; \alpha \neq \beta.
\]

A Clifford system on $\RR^N$ is said to be \emph{irreducible} if $\RR^N$ is not a direct sum of two positive dimensional subspaces that are invariant under all the $P_\alpha$.
\end{Definition}
From the representation theory of Clifford algebras, it is recognized (cf. Refs. \cite{FKMCNI} (p. 483) and~\cite{HusFiB}~(p.~163)) that $\RR^N$ admits an irreducible Clifford system ($C=(P_0,\dots , P_m)$) if and only if 
$N= 2\delta (m)$,
where $\delta(m)$ is given as in the following Table \ref{tab2}. 

Uniqueness can be discussed as follows. Given, on $\RR^N$, two Clifford systems ($C_m=(P_0,\dots , P_m)$ and $C'_m=(P'_0,\dots , P'_m)$), they are said to be \emph{equivalent} if $A \in O(N)$ exists such that $P'_\alpha = A^t P_\alpha A$  for all $\alpha$. Then, for $m \not\equiv 0$ mod $4$, there is a unique equivalence class of irreducible Clifford systems, and for $m \equiv 0$ mod $4$, there are two, which are classified by the two possible values of the trace $tr\; ({P_0 P_1 \dots P_m}) = \pm 2 \delta (m)$.

\begin{table}[H]
\caption{Clifford systems.}\label{tab2}
\begin{center}
{\begin{tabular}{cccccccccccccccccccccc}
\toprule
$m$&$1$&$2$&$3$&$4$&$5$&$6$&$7$&$8$&$9$&$10$&$11$&$12$&$13$&$14$&$15$&$16$&$\dots$&$8+h$\\
\midrule
$\delta(m)$&$1$&$2$&$4$&$4$&$8$&$8$&$8$&$8$&$16$&$32$&$64$&$64$&$128$&$128$&$128$&$128$&$\dots$&$16\delta(h)$\\
\bottomrule
\end{tabular}}
\end{center}
\end{table}

In Ref.~\cite{PPVCSO}, we outlined the following inductive construction for the irreducible Clifford systems on real Euclidean vector spaces ($\RR^N$), taking, as starting the point, the basic Clifford systems ($C_1,C_2,C_4,C_8$) associated with structures given by  $\U{1}, \U{2}, \Sp{2}\cdot \Sp{1}$, and $\Spin{9}$. All the cases appearing in Tables \ref{tab2} and \ref{tab3} make sense in the natural context of Riemannian manifolds. We get the following theorem (see Ref.~\cite{PPVCSO} for details).

\begin{Theorem}\label{Procedure} {\rm(Procedure to write new Clifford systems from old)}. Let $C_m= (P_0, P_1, \dots , P_m)$ be the last (or unique) Clifford system in $\RR^N$. Then, the first (or unique) Clifford system, 
\[
C_{m+1}=(Q_0, Q_1, \dots , Q_m, Q_{m+1}),
\]  
in $\RR^{2N}$ has, respectively, the following first and last endomorphisms:
\[
Q_0 =\left(
\begin{array}{r|r}
0 & \Id \\
\hline 
\Id & 0
\end{array}\right), \quad Q_{m+1}=\left(
\begin{array}{r|r}
\Id & 0\\
\hline 
 0 & -\Id
\end{array}\right),
\]
where the blocks are $N \times N$. The remaining matrices are
\[
\qquad   \qquad  \qquad  \qquad Q_\alpha =\left(
\begin{array}{c|c}
0 & -P_{0\alpha} \\
\hline 
P_{0 \alpha} & 0
\end{array}\right) \qquad  \qquad  \qquad  (\alpha=1,\dots, m). 
\] 

Here, $P_{0 \alpha}$ are the complex structures given by $P_0 P_\alpha$ compositions in the Clifford system $C_m$. When the complex structures ($P_{0\alpha}$) can be viewed as (possibly block-wise) right multiplications by some of the unit quaternions ($i,j,k$) or unit octonions ($i,j,k,e,f,g,h$), and if the dimension permits it, further similarly defined endomorphisms ($Q_\beta$) can be added by using some others among $i,j,k$ or $i,j,k,e,f,g,h$.
\end{Theorem}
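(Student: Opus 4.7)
The result is essentially a block-matrix verification that $(Q_0,Q_1,\dots,Q_{m+1})$ satisfies the three Clifford axioms on $\RR^{2N}$, followed by an identification of the resulting system inside the appropriate equivalence class. As preparation I would first record three properties of the compositions $P_{0\alpha}\ug P_0 P_\alpha$ ($\alpha=1,\dots,m$): they square to $-\Id$ (from $P_{0\alpha}^2=-P_0^2 P_\alpha^2=-\Id$, using $P_\alpha P_0=-P_0 P_\alpha$), they are skew-symmetric ($(P_0 P_\alpha)^T=P_\alpha P_0=-P_0 P_\alpha$), and distinct ones anti-commute ($P_{0\alpha}P_{0\beta}=-P_\alpha P_\beta$ for $\alpha\ne\beta$, which reduces to the anti-commutation already present in $C_m$). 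In particular, each $P_{0\alpha}$ is a complex structure compatible with the ambient metric.

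The verification of the Clifford axioms for the $Q_i$ is then a short case analysis. Symmetry of $Q_0$ and $Q_{m+1}$ is immediate; $Q_\alpha^T=Q_\alpha$ follows from the skew-symmetry of $P_{0\alpha}$. The identities $Q_0^2=Q_{m+1}^2=\Id$ are trivial, while $Q_\alpha^2$ is block-diagonal with both blocks equal to $-P_{0\alpha}^2=\Id$. For anti-commutation, the four pair types $\{Q_0,Q_{m+1}\}$, $\{Q_0,Q_\alpha\}$, $\{Q_{m+1},Q_\alpha\}$ and $\{Q_\alpha,Q_\beta\}$ each collapse to a one-line block multiplication; only the last actually invokes the anti-commutation $P_{0\alpha}P_{0\beta}=-P_{0\beta}P_{0\alpha}$ established in the preparation.

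Turning to the identification as the \emph{first} (or unique) Clifford system on $\RR^{2N}$, my plan is a dimension count via the table of $\delta(m)$: when $\delta(m+1)=2\delta(m)=N$, the target dimension $2N=2\delta(m+1)$ is the minimal one admitting any $C_{m+1}$, which forces irreducibility; otherwise the construction produces a Clifford system equivalent to a direct sum of copies of the irreducible one. When $m+1\equiv 0\pmod 4$ and two inequivalent irreducible classes exist, they are distinguished by the sign of $\mathrm{tr}(Q_0 Q_1\cdots Q_{m+1})$. I expect this sign computation --- rewriting the product as a $2\times 2$ block matrix involving $P_0 P_1\cdots P_m$ and reading off its trace --- to be the most delicate step, since it pins down which of ``first'' and ``last'' corresponds to which trace sign, and hence justifies the labelling convention adopted throughout.

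Finally, the addendum that extra generators $Q_\beta$ may be adjoined when the $P_{0\alpha}$ are (block) right multiplications by units in $\HH$ or $\OO$ follows from the very same block computation: any $Q_\beta=\left(\begin{smallmatrix} 0 & -R_u \\ R_u & 0 \end{smallmatrix}\right)$, with $R_u$ right multiplication by a unit imaginary element that anti-commutes with every existing $P_{0\alpha}$, automatically satisfies all Clifford relations with the previous $Q_i$'s; the Hurwitz--Radon bound $2N=2\delta(m+1+k)$ then controls how many such $R_u$'s may be added.
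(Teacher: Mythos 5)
Your plan is correct and is essentially the intended argument (the survey itself defers the proof to \cite{PPVCSO}): the preparatory facts about $P_{0\alpha}=P_0P_\alpha$ (square $-\Id$, skew-symmetry, mutual anti-commutation), the four-case block verification of the Clifford axioms for the $Q_i$, the dimension count $2N=2\delta(m+1)$ forcing irreducibility, and the observation that the extra generators $Q_\beta$ work because right multiplications by orthogonal imaginary units of $\HH$ or $\OO$ anti-commute, are exactly what is needed. One simplification: the trace-sign computation you single out as the most delicate step is in fact vacuous, because the hypothesis that $C_m$ is the \emph{last} system in $\RR^N$ forces $m\equiv 0,1,2,4 \pmod 8$, hence $m+1\not\equiv 0\pmod 4$, so there is only one equivalence class of irreducible $C_{m+1}$ and ``first'' refers to the position of the index $m+1$ among the Clifford systems living on $\RR^{2N}$ (which your dimension count already settles), not to a trace class.
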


\begin{table}[H]
\caption{Clifford systems $C_m$ and $G$-structures on Riemannian manifolds ($M^N$).}\label{tab3}
\begin{center}
\scalebox{0.75}[0.75]{
{
\begin{tabular}{cccccccccccccccc}
\toprule 
$m$&$1$&$2$&$3$&$4$&$5$&$6$&$7$&$8$&$9$&$10$&$11$&$12$\\
\midrule
$N$&$2$&$4$&$8$&$8$&$16$&$16$&$16$&$16$&$32$&$64$&$128$&$128$\\\midrule
$G$&$\U{1}$&$\U{2}$&$\Sp{1}^3$&$\Sp{2}\Sp{1}$&$\SU{4}\Sp{1}$&$\Spin{7}\U{1}$&$\Spin{8}$&$\Spin{9}$&$\Spin{10}$&$\Spin{11}$&$\Spin{12}$&$\Spin{13}$\\
\bottomrule
\end{tabular}
}
}
\end{center}
\end{table}

The notion of an even Clifford structure, a kind of unifying notion proposed by A Moroianu and U. Semmelmann \cite{MoSCSR}, is instead given by the following datum on a Riemannian manifold ($(M,g)$).

\begin{Definition} An \emph{even Clifford structure} on $(M,g)$ is a real oriented Euclidean vector bundle $(E,h)$, together with an algebra bundle morphism ($\varphi: \; \text{Cl}^0(E) \rightarrow \End{TM}$) which maps $\Lambda^2 E$ into skew-symmetric endomorphisms. 
\end{Definition}

By definition, a Clifford system always gives rise to an even Clifford structure, but there are some even Clifford structures on manifolds that cannot be constructed, even locally, from Clifford systems. An example of this is given by a $\Spin{7}$ structure on any oriented 8-dimensional Riemannian manifold as a consequence of the following observations (cf. Ref. \cite{PaPSAC} for further details).

\begin{Proposition}
Let $C_m=\{P_0,\dots,P_m)$ be a Clifford system in $\RR^n$. The compositions $\J_{\alpha\beta}\ug P_\alpha P_\beta$ for $\alpha<\beta$, and $\J_{\alpha\beta\gamma}\ug P_\alpha P_\beta P_\gamma$ for $\alpha<\beta<\gamma,$ are linearly independent complex structures on $\RR^n$.
\end{Proposition}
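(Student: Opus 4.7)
The plan is to verify first that each $\J_{\alpha\beta}$ and $\J_{\alpha\beta\gamma}$ is an orthogonal almost complex structure on $\RR^n$, and then to deduce linear independence by showing that the $\J$'s are pairwise orthogonal under the Hilbert--Schmidt trace pairing.

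For the complex-structure property, I work only with the defining relations $P_\alpha^2=\Id$ and $P_\alpha P_\beta=-P_\beta P_\alpha$ for $\alpha\ne\beta$. A single transposition yields
\[
\J_{\alpha\beta}^2=P_\alpha P_\beta P_\alpha P_\beta=-P_\alpha^2P_\beta^2=-\Id,
\]
and three analogous transpositions (to bring matched generators together) give $\J_{\alpha\beta\gamma}^2=-\Id$. Since each $P_\alpha$ is symmetric, reversing the order of two (resp.\ three) pairwise anticommuting symmetric factors costs $\binom{2}{2}=1$ (resp.\ $\binom{3}{2}=3$) sign changes, both odd, so $\J_{\alpha\beta}^*=-\J_{\alpha\beta}$ and $\J_{\alpha\beta\gamma}^*=-\J_{\alpha\beta\gamma}$. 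Combined with $\J^2=-\Id$ this makes every such endomorphism an orthogonal complex structure.

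For linear independence I set $\langle A,B\rangle=\mathrm{tr}(A^*B)$ and compute $\langle \J_I,\J_{I'}\rangle$ for index sets $I,I'\subset\{0,\dots,m\}$ of sizes two or three. Using $P_\alpha^2=\Id$ and the anticommutation relations, $\J_I\J_{I'}$ reduces (up to a sign) to the ordered monomial $P_S\ug P_{s_1}\cdots P_{s_r}$, where $S=I\triangle I'$ is the symmetric difference and $r=|S|$. When $I=I'$ one has $\J_I^2=-\Id$ and $\langle \J_I,\J_I\rangle=n>0$; for $I\neq I'$ the task reduces to proving $\mathrm{tr}(P_S)=0$. I argue this by a conjugation trick combined with cyclicity of trace: for any $\delta$, $\mathrm{tr}(P_S)=\mathrm{tr}(P_\delta P_S P_\delta^{-1})$, where $P_\delta P_SP_\delta^{-1}=(-1)^rP_S$ if $\delta\notin S$ and $(-1)^{r-1}P_S$ if $\delta\in S$. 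Choosing $\delta\notin S$ for $r$ odd and $\delta\in S$ for $r$ even produces $-P_S$, whence $\mathrm{tr}(P_S)=-\mathrm{tr}(P_S)=0$ and the $\J$'s are mutually orthogonal, hence linearly independent.

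The main obstacle is the boundary case $r=m+1$ odd with $S=\{0,\dots,m\}$: then no outside $\delta$ is available and the inside conjugation returns $+P_S$. Within the range $|I|,|I'|\le 3$ this can happen only at $r=5$, i.e.\ $m=4$, where $P_S$ is the Clifford volume element. Resolving this corner requires a separate representation-theoretic input (for instance, the observation that $P_S$ is skew exactly when $\binom{r}{2}$ is odd, which disposes of $r\in\{2,3,6\}$ for free, so only $r\in\{4,5\}$ need the conjugation argument and only $r=5$ can be exceptional); in every other configuration the pairing is definite on the span of the $\J$'s and linear independence is automatic.
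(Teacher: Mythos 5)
Your proof uses the same trace--pairing framework as the paper's, but your mechanism for the vanishing of $\mathrm{tr}(\J_I^*\J_{I'})$ is genuinely different. The paper factors $P_\beta P_\alpha P_\gamma P_\delta$ (four distinct indices) as the product of the skew-symmetric $P_\beta P_\alpha P_\gamma$ with the symmetric $P_\delta$, concludes the trace vanishes, and dismisses the remaining cases with ``similar arguments''. Your conjugation trick --- $\mathrm{tr}(P_S)=\mathrm{tr}(P_\delta P_SP_\delta^{-1})=\pm\mathrm{tr}(P_S)$ with the sign controlled by whether $\delta\in S$ and by the parity of $r=|S|$ --- is uniform in $r$ and, more importantly, exposes precisely where the argument can break: $r=m+1$ with $r$ odd.

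You are right to worry about the boundary case $r=5$, $m=4$, and in fact this gap cannot be filled: the Proposition as stated is false there. For an irreducible Clifford system $C_4$ on $\RR^8$, the symmetric involution $P_0P_1P_2P_3P_4$ commutes with each $P_\alpha$, hence equals $\pm\Id$ by Schur's lemma, so $\J_{234}=\mp\J_{01}$ and linear independence fails. The paper's own proof has the same unaddressed hole: it verifies orthonormality inside $\{\J_{\alpha\beta}\}$ and (``similarly'') inside $\{\J_{\alpha\beta\gamma}\}$, but the cross-pairings $\mathrm{tr}(\J_{\alpha\beta}^*\J_{\gamma\delta\epsilon})$, which produce $r\in\{1,3,5\}$, are never discussed; and the skew-times-symmetric factorization cannot be made to work for $r=5$, since every two-block split of a five-letter word is sym$\,\times\,$sym or skew$\,\times\,$skew. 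So your explicit flagging of this corner is an improvement on the paper's exposition; the statement should carry the tacit restriction $m\neq4$, which holds in every use the paper makes of it ($m=6$ for the $\Spin{7}$ corollary, $m=8$ for $\Spin{9}$). One tiny slip on your side: $r=1$ also needs the conjugation step (a single $P_\alpha$ is symmetric, so its trace does not vanish ``for free''), though since $m\ge2$ an outside index is always available in that case.
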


\begin{proof}
It can be easily recognized that $\J_{\alpha\beta}$ and $\J_{\alpha\beta\gamma}$ are complex structures. On the other hand, for~any $\alpha=0,\dots,m$, it can be observed that $tr (P^*_\alpha P_\alpha)=1$, and for $\alpha <\beta$, $tr (P^*_\alpha P_\beta)=0$, 
so that the $P_\alpha$ are orthonormal and symmetric. By a similar argument, $ tr (\J^*_{\alpha\beta}\J_{\alpha\beta})=1$ and $ tr (\J^*_{\alpha\beta}\J_{\gamma\delta})= tr(P_\beta P_\alpha P_\gamma P_\delta)=0$ if $\gamma$ or $\delta$ 
equals $\alpha$ or $\beta$. Also, for $\alpha\neq\gamma$ and $\beta\neq\delta$, the $\J^*_{\alpha\beta}\J_{\gamma\delta}$ is the composition of the skew-symmetric $\J_{\beta\alpha\gamma}$ and the symmetric $P_\delta$, and as such, its trace is necessarily zero. Similar arguments show that the $\J_{\alpha\beta\gamma}$ for $\alpha<\beta<\gamma$ are orthonormal.
\end{proof}

\begin{Corollary}\label{spin7}
The $\Spin{7}$-structures on $\RR^8$ cannot be defined through Clifford system $C_6$.
\end{Corollary}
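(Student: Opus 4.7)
The plan is a short dimension-count argument that leverages the Proposition stated immediately before the Corollary. Suppose, toward a contradiction, that a $\Spin{7}$-structure on $\RR^8$ arises from a Clifford system $C_6=(P_0,\dots,P_6)$ on $\RR^8$, i.e.\ from seven symmetric anti-commuting involutions of $\RR^8$.

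First I would invoke the preceding Proposition to produce the complex structures $\J_{\alpha\beta}\ug P_\alpha P_\beta$ for $0\le\alpha<\beta\le 6$ and $\J_{\alpha\beta\gamma}\ug P_\alpha P_\beta P_\gamma$ for $0\le\alpha<\beta<\gamma\le 6$. The Proposition asserts that these are linearly independent; counting them gives
\[
\binom{7}{2}+\binom{7}{3} = 21+35 = 56
\]
linearly independent elements of $\End{\RR^8}$.

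Next I would observe that all of these endomorphisms are skew-symmetric. Indeed, for $\alpha\ne\beta$ one has $(P_\alpha P_\beta)^T = P_\beta P_\alpha = -P_\alpha P_\beta$ since the $P_\alpha$ are symmetric and anti-commute, and the analogous computation for triple products shows $(P_\alpha P_\beta P_\gamma)^T = -P_\alpha P_\beta P_\gamma$. Hence all $56$ complex structures lie in $\lieso{8}$.

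Finally I would compare dimensions: $\dim\lieso{8} = \binom{8}{2} = 28$, so fitting $56$ linearly independent vectors into a $28$-dimensional space is impossible, yielding the required contradiction. I do not anticipate any real obstacle; the Proposition does the substantive work. As an alternative (or sanity check), one could bypass this counting by appealing to the classification recalled in Table~\ref{tab2}: an irreducible $C_6$ requires $N=2\delta(6)=16$, and a reducible $C_6$ on $\RR^8$ would decompose into invariant subspaces each of dimension at least $16$, which is impossible in dimension $8$.
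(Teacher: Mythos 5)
Your main argument is correct and essentially the paper's own proof: the paper counts only the $35$ triple products $\J_{\alpha\beta\gamma}$ and observes they cannot fit into $\Lambda^2\RR^8=\Lambda^2_7\oplus\Lambda^2_{21}$ (dimension $28$), whereas you also include the $21$ double products to reach $56$; either count exceeds $\dim\lieso{8}=28$, so the same dimension count does the work. Your alternative via Table~\ref{tab2} (an irreducible $C_6$ needs $N=2\delta(6)=16$, and a reducible one splits into such pieces) is also valid and in fact proves the stronger statement that no Clifford system $C_6$ exists on $\RR^8$ at all.
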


\begin{proof}
For any choice of such a Clifford system, $C_6=(P_0, \dots P_6)$ in $\RR^8$, the complex structures $\J_{\alpha\beta\gamma}$ for $\alpha<\beta<\gamma$ give rise to $35$ linearly independent skew-symmetric endomorphisms, contradicting the decomposition of $2$-forms in $\RR^8$ under $\Spin{7}$:
\begin{equation}\label{dec7'}
\Lambda^2\RR^8 = \Lambda^2_7 \oplus \Lambda^2_{21}.
\end{equation}
\end{proof}

Nevertheless, the right multiplications by $i,j,k,e,f,g,h \in\OO$ span the $E^7 \subset \mathrm{End}^- \RR^8$ vector bundle, and this identifies $\Spin{7}$ structures among the even Clifford structures.

The following Sections present further examples of such \emph{essential Clifford structures}, i.e., Clifford structures not coming from Clifford systems.

On Riemannian manifolds ($(M,g)$), it is natural to consider the following class of even Clifford~structures.

\begin{Definition} The even Clifford structure $E^r$ on $(M,g)$ is said to be \emph{parallel} if a metric connection ($\nabla^E$) exists on $E$ such that $\varphi$ is connection preserving, i.e.,
\[
\varphi(\nabla^E_X \sigma) = \nabla^g_X \varphi (\sigma)
\]
for every tangent vector $X \in TM$ and section $\sigma$ of $\text{Cl}^0 E$, and where $\nabla^g$ is the Levi--Civita connection. 
\end{Definition}

Table \ref{tab4} summarizes the non-flat, parallel, even Clifford structure, as classified in Ref. \cite{MoSCSR}. The~non-compact duals of the appearing symmetric spaces have to be added. A good part of the listed manifolds appear in the following sections.

\begin{table}[H]
\caption{Parallel, non-flat, even Clifford structures (cf. Ref. \cite{MoSCSR}).}\label{tab4}
\begin{center}
\scalebox{0.85}[0.85]{
{
\begin{tabular}{cccc}
\toprule
\boldmath$r$ & \textbf{Type of} \boldmath{$E^r$} &\boldmath$M$&\textbf{Dimension of} \boldmath{$M$}\\
\midrule
2&&K\"ahler&$2m, m\geq 1$\\

3&projective if $M \neq \HH P^q$&quaternion K\"ahler (qK) &$4q, q \geq 1$ \\

4&projective if $M \neq \HH P^{q^+} \times \HH P^{q^-}$&product of two qK&$4(q^++q^-) $\\
\midrule
5&&qK&8\\

6&projective if $M$ non-spin&K\"ahler &8 \\

7&&$\Spin{7}$ holonomy& 8\\

8&projective if $M$ non-spin&Riemannian&8\\
\midrule
5& &$Gr_{2}(\HH^{n+2})$&$8n$\\

6&projective for $n$ odd&$Gr_{4}(\CC^{n+4})$&$8n$\\

8&projective for $n$ odd &$Gr_{8}(\RR^{n+8})$&$8n$ \\
\midrule
9& &$\FII$&16 \\

10& &$\EIII$& 32 \\

12& &$\EVI$&64 \\

16& &$\EVIII$& 128 \\
\bottomrule
\end{tabular}
}}
\end{center}
\end{table}

\section{The complex Cayley projective plane}\label{Complex Cayley}

This section deals with 
$$\EIII = \mathrm{E_6}/\Spin{10}\cdot \U{1} \cong  V_{16}^{78} \subset \CC P^{26},$$ 
the second, after the Cayley projective plane ($\FII$), of the exceptional symmetric spaces appearing in Table \ref{tab4}.

A remarkable feature of $\EIII$, one of the two exceptional Hermitian symmetric spaces of compact type, is its model as a smooth projective algebraic variety of complex dimension $16$ and degree $78$, the so-called fourth Severi variety $ V_{16}^{78} \subset \CC P^{26}$. This name was proposed by F. Zak \cite{ZakSeV}, who classified the smooth projective algebraic varieties ($V_n$) in a $\CC P^N$ that, in spite of their critical dimension ($n=\frac{2}{3}(N-2)$), are unable to fill $\CC P^N$ through their chords.

On the other hand, $\EIII$ admits a construction that is very similar to the one of the Cayley projective plane ($\FII$). One can, in fact, look at the complex octonionic Hermitian matrices
$$Z= \left(
\begin{array}{rrr}
c_1& x_1 &x_2 \\
\bar x_1  & c_2 & x_3 \\
\bar x_2 & \bar x_3 & c_3
\end{array}
\right) \in \mathrm{Herm}_3  (\CC \otimes \OO) \equiv \CC^{27}, \; \; c_\alpha \in \CC; \; x_\alpha \in \CC \otimes \OO,$$ 
which are acted on by $\mathrm{E}_6$ with three orbits on $\CC P^{26}$. The closed one consists of $Z$ matrices of rank one,
\[
Z^2 = (\mathrm{trace} \; Z) Z,
\]
and, as such, can be thought as (virtual) ``projectors on complex octonionic lines in $(\CC \otimes \OO)^3$''; thus, they are points of the complex projective Cayley plane $\EIII = \mathrm{E_6}/\Spin{10}\cdot \U{1} \subset \CC P^{26}$.

The projective algebraic geometry of $\EIII \subset \CC P^{26}$ was studied in detail in Ref. \cite{IlMCRC}. Similarly to Corollary \ref{spin7}, we have the following proposition. 

\begin{Proposition}\label{pr:ind}
The complex space $\CC^{16}$ does not admit any family of ten endomorphisms ($P_0,\dots,P_9$) that satisfies the properties of a Clifford system and is compatible with respect to the standard Hermitian scalar product $g$.
\end{Proposition}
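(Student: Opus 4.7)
The strategy is a contradiction argument relying on Schur's lemma for the unique real irreducible representation of the Clifford algebra $\mathrm{Cl}_{10,0}$.

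Suppose such $\CC$-linear Hermitian endomorphisms $P_0,\ldots,P_9$ of $\CC^{16}$ exist. Forgetting the complex structure, $\CC^{16}$ becomes $\RR^{32}$, equipped with the real inner product $\Re g$, with respect to which each $P_\alpha$ is $\RR$-symmetric. Since $32=2\delta(9)$ (cf. Table~\ref{tab2}), the tuple $(P_0,\ldots,P_9)$ would form an irreducible Clifford system $C_9$ on $\RR^{32}$ in the sense of the definition recalled at the start of this Section.

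The $P_\alpha$ then generate a faithful representation of the real Clifford algebra $\mathrm{Cl}_{10,0}$ on $\RR^{32}$. Using the standard periodicity $\mathrm{Cl}_{p+8,q}\cong\mathrm{Cl}_{p,q}\otimes M_{16}(\RR)$, one obtains
$$\mathrm{Cl}_{10,0}\cong\mathrm{Cl}_{2,0}\otimes M_{16}(\RR)\cong M_2(\RR)\otimes M_{16}(\RR)\cong M_{32}(\RR),$$
so $\mathrm{Cl}_{10,0}$ is a simple algebra whose unique irreducible real module has dimension $32$. Faithfulness together with the dimension count force $\RR^{32}$ to coincide with this irreducible module.

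The contradiction now follows: since each $P_\alpha$ is $\CC$-linear, the complex structure $J$ of $\CC^{16}$ commutes with every $P_\alpha$, hence with the entire $\mathrm{Cl}_{10,0}$-action. Schur's lemma applied to an irreducible action of a simple real algebra forces the commutant to be $\RR\cdot\Id$, so $J=\lambda\Id$ for some $\lambda\in\RR$; this contradicts $J^2=-\Id$.

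The only non-routine ingredient is the Clifford-algebra identification $\mathrm{Cl}_{10,0}\cong M_{32}(\RR)$, which is classical. Conceptually the obstruction reflects the fact that, while the complex $16$-dimensional half-spin representation of $\Spin{10}$ is preserved by the even subalgebra $\mathrm{Cl}^0_{10,0}$ (the source of the $\liespin{10}$-action tangent to $\EIII$), no complex structure on that representation can be compatible with the full odd-degree Clifford action of the $P_\alpha$; this is exactly why the $\Spin{10}$ even Clifford structure on $\EIII$ is necessarily projective rather than induced by a Clifford system on the nose.
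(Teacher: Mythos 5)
Your proof is correct, but it reaches the contradiction along a slightly different route than the paper. The paper's own proof is a one-line complex dimension count: since the $P_\alpha$ are $\CC$-linear with $P_\alpha^2=\Id$, the endomorphisms $iP_\alpha$ would turn $\CC^{16}$ into a module over the complex Clifford algebra $\CC\mathrm{l}_{10}\cong \CC(32)$, whose unique irreducible module has complex dimension $32>16$. You instead stay over $\RR$: you identify $\RR^{32}$ with the unique irreducible module of $\mathrm{Cl}_{10,0}\cong M_{32}(\RR)$ (consistent with Table~\ref{tab2} and with the paper's remark that $\RR^{32}$ does admit a Clifford system $C_9$), and then exclude a commuting complex structure by Schur's lemma, the commutant being $\RR\cdot\Id$ because the module is of real type. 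The two arguments rest on the same classification fact and are really two faces of one statement: the commutant being $\RR$ (real type) is equivalent to the complexification being irreducible of complex dimension $32$, which is what the paper uses. The paper's version is shorter; yours has the merit of making explicit that the obstruction is not the real dimension --- a $C_9$ system does live on $\RR^{32}$ --- but the impossibility of a compatible complex structure, and your closing remark correctly ties this to the fact that only the even part $\mathrm{Cl}^0$ (hence $\liespin{10}$) can act $\CC$-linearly on $\CC^{16}$, which is why the rank-$10$ structure on $\EIII$ is essential rather than induced by a Clifford system. One cosmetic point: you do not need to infer irreducibility of the system from the equality $32=2\delta(9)$ as a separate step; as your own argument shows, it follows from the simplicity of $M_{32}(\RR)$, since any invariant subspace would be a submodule of dimension a positive multiple of $32$.
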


\begin{proof} The family $P_0, \dots , P_9$ would define (after multiplying each of them by $i$) a representation of the complex Clifford algebra $\CC \mathrm{l}_{10}\cong \CC(32)$ (the order $32$ complex matrix algebra) on the $\CC^{16}$ vector space.
\end{proof}

Note, however, that the Euclidean space $\RR^{32}$ admits the Clifford system $C_9$ (cf. Table \ref{tab2}). The~parallel, even Clifford structure on $\EIII$ can be defined through the following one, here described on the $\CC^{16}$ model space. 

For this, observe that $\mathfrak{spin}(10) \subset \mathfrak{su}(16)$ is generated as a Lie algebra by $\mathfrak{spin}(9)$ and $\mathfrak{u}(1)$, with~$\mathfrak{u}(1)$ spanned by
\[ 
\begin{pmatrix}i\Id_8 & 0 \\ 0 & -i\Id_8\end{pmatrix}=\begin{pmatrix}
i\Id_8 & 0 \\
0 & i\Id_8
\end{pmatrix} \cdot \begin{pmatrix}
\Id_8 & 0 \\
0 & -\Id_8
\end{pmatrix}
= \mathcal I_0 \cdot \mathcal I_9 =\J_{09}
 \]
\text{where} $ \I_0 = \mathfrak I\;  \text{is a complex structure of} \; \CC^{16}, \; \text{and} \;  \I_1, \dots , \I_9 \; \text{are the octonionic Pauli matrices} .
$

The rank 10 even Clifford structure on $\CC^{16}$ is then given by the vector bundle
\[
E^{10}= <\mathcal I_0 >\oplus \; <\I_1,\dots,\I_9> =<\mathfrak I >\oplus \; <\I_1,\dots,\I_9>\subset\End{TM},
\]
and
\[
\liespin{10}= \mathfrak{lie}\{\J_{09}, \J_{19}, \dots, \J_{89}\} = \; \text{span}\{\J_{\alpha\beta}=\I_\alpha\circ\I_\beta\}_{0\le\alpha<\beta\le9}.
\]

We get the following theorem (cf. Proposition \ref{pr:charpoly} and Theorem \ref{teo:main}).

\begin{Theorem}[\cite{PaPECS}]
Let $E^{10}$ be the even Clifford structure on $\EIII$. In accordance with the previous notations, the~characteristic polynomials 
\[
t^{10}+\tau_2 (\psi) t^8 + \tau_4 (\psi)t^6 + \dots
\]  
of the matrix $$\psi=(\psi_{\alpha \beta}) \in \Lambda^2 \otimes \mathfrak{so}(10)$$ of K\"ahler 2-forms of the $\J_{\alpha\beta}$ give:
\begin{enumerate}
\item[(i)] $\tau_2 (\psi) =  -3\omega^2
\; \text{where} \; \omega\; \text{is the K\"ahler 2-form of} \; \EIII $;
\item[(ii)] $[\tau_4 (\psi)] \in H^8$ is the primitive generator of the cohomology ring $H^*(\EIII;\RR)$.
\end{enumerate}
\end{Theorem}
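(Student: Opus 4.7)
The plan is to exploit the $\mathrm{E}_6$-invariance of the coefficients $\tau_k(\psi)$ to reduce both statements to finite-dimensional problems on the tangent space $T_x\EIII\cong\CC^{16}$, and then to pin down the constants by direct computation using the $\Spin{9}\subset\Spin{10}$ splitting of the Clifford data together with the results of Section~\ref{Canonical differential forms}.

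First, since $\psi=(\psi_{\alpha\beta})$ transforms by conjugation under the adjoint action of $\SO{10}$, each coefficient $\tau_{2k}(\psi)$ of its characteristic polynomial is pointwise $\Spin{10}\cdot\U{1}$-invariant, and hence defines a global $\mathrm{E}_6$-invariant $2k$-form on $\EIII$. By the Chevalley description of the cohomology of compact symmetric spaces as invariant forms, and since $H^4(\EIII;\RR)=\RR\,\omega^2$ is one-dimensional while $H^8(\EIII;\RR)=\RR\,\omega^4\oplus\RR\,u$ (with $u$ the primitive generator), we immediately obtain
$$\tau_2(\psi)=c\,\omega^2,\qquad \tau_4(\psi)=a\,\omega^4+b\,u$$
for some real constants $a,b,c$.

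To identify $c=-3$ in (i), I would split
$$\tau_2(\psi)=\sum_{1\le\alpha<\beta\le 9}\psi_{\alpha\beta}^2+\sum_{\beta=1}^9\psi_{0\beta}^2.$$
For the cross terms, each $\I_\beta$ (for $\beta\geq 1$) is a $\CC$-linear, self-adjoint involution of $\CC^{16}$ with $\pm1$-eigenspaces $V_\beta^\pm$ of complex dimension $8$. Writing $\omega_\beta^\pm$ for the restriction of $\omega$ to $V_\beta^\pm$, the K\"ahler form of $\J_{0\beta}=\mathfrak{I}\circ\I_\beta$ is $\psi_{0\beta}=\omega_\beta^+-\omega_\beta^-$, so that
$$\psi_{0\beta}^2=\omega^2-4\,\omega_\beta^+\wedge\omega_\beta^-.$$
For the first sum, use the fact that $\CC^{16}\cong\RR^{16}\oplus\RR^{16}$ splits into two copies of the real spin representation on which the $\Spin{9}$-generators act block-diagonally; Proposition~\ref{pr:charpoly} applied block-wise reduces this sum to an explicit bilinear ``cross block'' expression. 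Matching the total against $c\,\omega^2$ then yields $c=-3$.

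For (ii), it remains to prove $b\neq 0$, as this is exactly what it means for $[\tau_4(\psi)]$ to furnish the primitive generator modulo $\omega^4$. This I would verify by evaluating both sides on a suitable tangent $8$-plane $L\subset T_x\EIII$ on which $\omega|_L=0$ (a totally real ``Lagrangian-type'' subspace): then $\omega^4|_L=0$, whereas a direct check using the explicit list~\eqref{28}--\eqref{8} of K\"ahler forms augmented by the nine $\psi_{0\beta}$ shows $\tau_4(\psi)|_L\neq 0$. Hence $b\neq 0$, and $[\omega]$ together with $[\tau_4(\psi)]$ generates $H^\ast(\EIII;\RR)$.

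The main obstacle is the computational heart of Step~3: the combinatorial bookkeeping that combines the block-diagonal $\Spin{9}$ contribution with the nine cross terms $\psi_{0\beta}^2$ and with the varying eigenspace decompositions $V_\beta^\pm$ is quite delicate, and obtaining the precise value $c=-3$ (rather than merely $c\neq 0$) is the most sign-prone ingredient; the verification in Step~4 is conceptually straightforward but likewise requires a careful explicit check.
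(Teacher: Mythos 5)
Your reduction scheme for (i) is sound as far as it goes: the coefficients $\tau_{2k}(\psi)$ are independent of the local frame of $E^{10}$, hence isotropy-invariant at each point, and since invariant forms on the symmetric space $\EIII$ are counted by the Betti numbers ($b_4=1$, $b_8=2$) you correctly get $\tau_2(\psi)=c\,\omega^2$ and $\tau_4(\psi)=a\,\omega^4+b\,u$ with $u$ primitive. Your pointwise identities are also correct: $\psi_{0\beta}=\omega_\beta^+-\omega_\beta^-$ and $\psi_{0\beta}^2=\omega^2-4\,\omega_\beta^+\wedge\omega_\beta^-$, and the complexified $\I_1,\dots,\I_9$ do act block-diagonally on $\CC^{16}=\RR^{16}\oplus i\,\RR^{16}$, so that Proposition~\ref{pr:charpoly} kills the two pure blocks and leaves only the cross terms. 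But note that the entire content of (i) beyond ``some multiple of $\omega^2$'' is the constant $-3$, and your plan never produces it: ``matching the total against $c\,\omega^2$'' is exactly the nontrivial identity $2\sum_{1\le\alpha<\beta\le9}\psi'_{\alpha\beta}\wedge\psi''_{\alpha\beta}-4\sum_{\beta}\omega^+_\beta\wedge\omega^-_\beta=-12\,\omega^2$, for which you give no argument; in the cited source this is done by an explicit computation with all forty-five K\"ahler forms on the model $\CC^{16}$, so the computational heart of (i) is still missing rather than merely ``delicate''.

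The genuine gap is in (ii). Your Lagrangian-plane test does prove $b\neq0$ (and can be done cleanly: on the real $\OO^2\subset\CC^{16}$ every $\psi_{0\beta}$ restricts to zero, so $\tau_4(\psi)$ restricts to $\tau_4$ of the $\Spin{9}$ block, i.e.\ to $360\,\spinform{9}$, which is nonzero on an octonionic line where $\omega$ vanishes). However, $b\neq0$ only shows that $[\tau_4(\psi)]$, together with $[\omega]$, generates $H^*(\EIII;\RR)$ --- your own phrase ``primitive generator modulo $\omega^4$'' concedes this. The theorem asserts more: that $[\tau_4(\psi)]$ \emph{is} the primitive generator, i.e.\ that the coefficient $a$ of $\omega^4$ vanishes, equivalently $[\omega]^9\cup[\tau_4(\psi)]=0$, equivalently $\int_{\EIII}\omega^{12}\wedge\tau_4(\psi)=0$. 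Restriction to a single $8$-plane with $\omega\vert_L=0$ is structurally blind to $a$, so your plan cannot establish this part. You need a second, independent evaluation that detects the $\omega^4$-component: either an invariant-theoretic computation of $\int_{\EIII}\omega^{12}\wedge\tau_4(\psi)$, or, as in \cite{PaPECS} (compare Theorem~\ref{main} here), integration of $\tau_4(\psi)$ over the two families of linear $\CC P^4$'s ruling a totally geodesic quadric $Q_8\subset V_{16}^{78}$; since these cycles give a basis of $H_8$, this determines the class of $\tau_4(\psi)$ in the two-dimensional $H^8$ completely and yields generation and primitivity simultaneously.
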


In analogy with the $\Spin{9}$ situation (Section \ref{Canonical differential forms}), $\tau_4 (\psi)=\form{\Spin{10}\cdot \U{1}}$ is called the \emph{canonical 8-form} on $\EIII$.

Moreover, the following theorem applies. 

\begin{Theorem}\label{main} Let $\omega$ be the K\"ahler form, and let $\Phi_{\Spin{10}} = \tau_4(\psi)$ be the previously defined 8-form on $\EIII$. Then:
\begin{enumerate}
\item[(i)]  The de Rham cohomology algebra $H^*(\EIII)$ is generated by (the classes of) $\omega \in \Lambda^2$ and $\Phi_{\Spin{10}} \in \Lambda^8$.

\item[(ii)]  By looking at $\EIII$ as the fourth Severi variety ($V_{16}^{78} \subset \CC P^{26}$), the de Rham dual of the basis represented in $H^8(\EIII; \ZZ)$ by the forms $(\frac{1}{(2\pi)^4} \Phi_{\Spin{10}}, \frac{1}{(2\pi)^4} \omega^4)$ is given by the pair of algebraic cycles
\[
\Big(\CC P^4 + 3(\CC P^4)', \; \;  \CC P^4 + 5(\CC P^4)' \Big),
\]
where $\CC P^4, (\CC P^4)'$ are maximal linear subspaces that belong to the two different families ruling a totally geodesic, non-singular, quadric $Q_8$ contained in $V_{16}^{78}$.
\end{enumerate}
\end{Theorem}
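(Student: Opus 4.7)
The plan for (i) is to invoke Borel's theorem for the compact Hermitian symmetric space $\EIII = \mathrm{E}_6/(\Spin{10}\cdot \U{1})$: its rational cohomology equals $H^*(BT;\QQ)^{W_K}$ modulo the ideal of positive-degree $W_{\mathrm{E}_6}$-invariants. Using the standard fundamental-invariant degrees of $W_{\mathrm{E}_6}$ and of $W_{\Spin{10}\cdot \U{1}}$ one finds
\[
P_t(\EIII) \;=\; \frac{(1-t^{18})(1-t^{24})}{(1-t^2)(1-t^8)},
\]
so $H^*(\EIII;\QQ)$ is a polynomial algebra on two generators, one in degree $2$ and one in degree $8$, subject to relations in degrees $18$ and $24$. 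The K\"ahler class $[\omega]$ is the obvious degree-$2$ generator, while part (ii) of the previous theorem supplies $[\Phi_{\Spin{10}}]=[\tau_4(\psi)]$ as the primitive generator in degree $8$, independent of $[\omega]^4$. Hence $[\omega]$ and $[\Phi_{\Spin{10}}]$ generate the full ring, and the statement extends from $\QQ$ to $\RR$ by standard comparison.

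For (ii) the approach is to compute the $2\times 2$ matrix of integration pairings between the basis $(\tfrac{1}{(2\pi)^4}\Phi_{\Spin{10}},\tfrac{1}{(2\pi)^4}\omega^4)$ of $H^8(\EIII;\ZZ)$ and the natural basis $([\CC P^4],[(\CC P^4)'])$ of $H_8(\EIII;\ZZ)$, and then to invert. Here $[\CC P^4]$ and $[(\CC P^4)']$ are the two classes of maximal linear subspaces of the totally geodesic complex quadric $Q_8\subset V_{16}^{78}$, a classical picture for smooth even-dimensional quadrics. Since both $\CC P^4$ and $(\CC P^4)'$ sit in $\CC P^{26}$ as honest linear subspaces, the two integrals $\int \omega^4/(2\pi)^4$ are read off directly from the Fubini--Study normalization and return integer values determined by the degree of the linear embedding.

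The substantive step is evaluating $\int_{\CC P^4}\Phi_{\Spin{10}}$ and $\int_{(\CC P^4)'}\Phi_{\Spin{10}}$. The plan is to restrict $\tau_4(\psi)$ to the totally geodesic $Q_8$, where, via the splitting $\liespin{10}=\liespin{9}\oplus\mathfrak{u}(1)$ on tangent spaces, the K\"ahler $2$-forms $\psi_{\alpha\beta}$ reduce to much simpler ingredients, and then to pair with the tangent planes of the two rulings, as described in the Iliev--Manivel model~\cite{IlMCRC} of $V_{16}^{78}$. The two rulings of $Q_8$ are swapped by a symmetry of $Q_8$ that does not extend to an $\mathrm{E}_6$-equivariance of the ambient $V_{16}^{78}$; this is what produces two \emph{different} values for $\int \Phi_{\Spin{10}}$ and is the source of the asymmetric coefficients $(1,3)$ and $(1,5)$ in the answer.

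The main obstacle is precisely this evaluation of $\Phi_{\Spin{10}}$ on each ruling: a brute-force expansion of the $702$-monomial form is intractable, so the feasible route is to express $\Phi_{\Spin{10}}$ through the coefficient $\tau_4(\psi)$ of the characteristic polynomial and exploit $\Spin{10}\cdot \U{1}$-equivariance to reduce matters to a representation-theoretic computation of how the top invariant $8$-form on $Q_8$ pairs with each of the two families of maximal linear subspaces. With the four pairings in hand, a $2\times 2$ inversion yields integer coefficient vectors for the dual cycles, which will come out to be $(1,3)$ and $(1,5)$, proving the claimed duality.
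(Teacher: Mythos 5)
This survey states the theorem without proof (it is quoted from the authors' work on the fourth Severi variety \cite{PaPECS}, which in turn leans on Iliev--Manivel's determination of the integral cohomology/Chow ring of $\EIII$ \cite{IlMCRC} together with an explicit evaluation of $\Phi_{\Spin{10}}$ on concrete algebraic cycles), so your proposal has to stand on its own. For (i) your Borel computation of the Poincar\'e series is correct, but the series alone does not yield generation by classes of degree $2$ and $8$: for instance you would still need that $[\omega]^8$, $[\omega]^4[\Phi_{\Spin{10}}]$ and $[\Phi_{\Spin{10}}]^2$ are linearly independent in $H^{16}$, which follows neither from hard Lefschetz nor from the primitivity statement of the preceding theorem. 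To close this you must either carry out the invariant-theoretic reduction inside Borel's presentation or simply invoke the known ring presentation of $H^*(\EIII;\ZZ)$ with one generator in degree $2$ and one in degree $8$ and relations in degrees $18$ and $24$; that is a fillable but genuine gap.

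Part (ii) is where the proposal breaks down. The substantive content --- the actual values of $\int\Phi_{\Spin{10}}/(2\pi)^4$ over the two families --- is exactly what you defer to an unspecified ``representation-theoretic computation'', so nothing is established; in the original argument this evaluation (using the restriction of the rank-$10$ bundle $E^{10}$, which differs on the two families, and the cycle data of \cite{IlMCRC}) is the heart of the proof. Worse, your final step cannot produce the stated answer under your own reading: since $\CC P^4$ and $(\CC P^4)'$ are both \emph{linear} subspaces of $\CC P^{26}$, they pair \emph{equally} with $\omega^4/(2\pi)^4$, so the $2\times 2$ integration matrix you propose to invert has a row proportional to $(1,1)$; its inverse then forces one dual cycle to be proportional to $[\CC P^4]-[(\CC P^4)']$, i.e.\ to have coefficients of opposite sign. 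No inversion of that matrix can return the all-positive vectors $(1,3)$ and $(1,5)$, and indeed $\CC P^4+5(\CC P^4)'$ pairs with $\omega^4/(2\pi)^4$ to $6$, not $1$. So either your interpretation of ``de Rham dual'' or your predicted outcome is wrong; you need to pin down the duality actually asserted (it is formulated against the basis of $H_8(\EIII;\ZZ)$ and $H^8(\EIII;\ZZ)$ coming from the algebraic cycles and Schubert classes of \cite{IlMCRC}, not by naively inverting the pairing matrix with the two rulings) and then actually compute $\Phi_{\Spin{10}}$ on both families. As it stands, (ii) is unproved and the announced mechanism for obtaining the coefficients is internally inconsistent.
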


\section{Cayley-Rosenfeld planes}\label{Rosenfeld}

Besides the real and the complex Cayley projective planes $\FII$ and $\EIII$, there are two further exceptional symmetric spaces of compact type, that are usually referred to as the \emph{Cayley-Rosenfeld projective planes}, namely, the ``projective plane over the quaternionic octonions'':
$$\EVI=\mathrm{E_7}/\Spin{12}\cdot \Sp{1}=(\HH \otimes \OO) P^2,$$
and the ``projective plane over the octonionic octonions'':
$$\EVIII= \mathrm{E_8}/\Spin{16}^+ = (\OO \otimes \OO) P^2.$$

By referring to the inclusions $\OO \hookrightarrow \CC \otimes \OO \hookrightarrow \HH \otimes \OO \hookrightarrow \OO \otimes \OO$, it can be observed that the projective geometry of these four projective planes, which is notably present on the first two steps, becomes weaker at the third and fourth Cayley-Rosenfeld projective planes (cf. Ref. \cite{BaeOct}). However, the~dimensions of these four exceptional symmetric spaces are coherent with this terminology. According to Table \ref{tab4}, these four spaces have the highest possible ranks for non-flat, parallel, even~Clifford structures. 

The following Table \ref{tab5} summarizes the even Clifford structures on the four Cayley-Rosenfeld planes. Only the one on $\FII$ is given by a Clifford system; the other three are essential. Concerning the cohomology generators, the one in dimension $8$ can also be constructed for $\EVI$ and $\EVIII$ via the fourth coefficient ($\tau_4(\psi)$) of the matrices ($\psi$) of K\"ahler 2-forms that are associated with the even Clifford structures that we are now listing.

\begin{table}[H]
\caption{Even Clifford structures on the Cayley-Rosenfeld projective planes.}\label{tab5}
\begin{center}
{
\begin{tabular}{cccc}
\toprule
\textbf{Model }& \textbf{Symm. Space} & \textbf{Even Clifford Structure}  &  \textbf{Cohomology Gen.}\\
\midrule
$\RR^{16}$&$\FII$  & $E^9=<\I_1,\dots,\I_9>$   &  in $H^8$\\
$\CC^{16}$&$\EIII$& $E^{10}<\mathfrak I >\oplus <I_1,\dots,\I_9>$ &  in $H^2, H^8$\\
$\HH^{16}$&$\EVI$&$E^{12}=<\mathfrak I , \mathfrak J , \mathfrak K>\oplus <\I_1,\dots,\I_9>$ &  in $H^4, H^8, H^{12}$ \\
$\OO^{16}$ &$\EVIII$ & $E^{16}=<\mathfrak I , \dots, \mathfrak H>\oplus <\I_1,\dots,\I_9>$ &   in $H^8, H^{12}, H^{16}, H^{20}$ \\
\bottomrule 
\end{tabular}
}
\end{center}
\end{table}

\begin{Remark}
The matrices of the K\"ahler 2-forms that are associated with the even Clifford structures go through
\[
\liespin{9} \subset \liespin{10} \subset \liespin{12} \subset \liespin{16},
\]
and the last Lie algebra decomposes as
\[
\liespin{16}= \mathfrak{so}(16)= \liespin{9} \oplus \Lambda^2_{84}.
\]
By recalling the identification $(1\leq \alpha<\beta<\gamma \leq 9)$
\[
\Lambda^2_{84} = <J_{\alpha\beta\gamma}=\I_\alpha \I_\beta \I_\gamma>
\]
(cf. the observation after \eqref{decomposition} as well as \cite{PPVCSO}), this identification takes back the even Clifford structure of the $128$-dimensional Cayley-Rosenfeld plane ($\EVIII$) to the $\Spin{9}$-structures that we started with.
\end{Remark}

\section{Exceptional symmetric spaces}\label{Exceptional}

 A good number of symmetric spaces that appear in Table \ref{tab4} belong to the list of exceptional Riemannian symmetric spaces of compact type, 
\[
\EI, \, \EII, \; \EIII, \; \EIV, \; \EV, \; \EVI, \; \EVII, \; \EVIII, \; \EIX, \; \FI, \; \FII,  \; \GI, 
\] 
that are part of the E. Cartan classification. Among them, the two exceptional Hermitian symmetric spaces,
\[
\EIII = \frac{\Esix}{\mathrm{Spin}(10)\cdot\mathrm{U}(1)} \quad \text{and} \quad \EVII =  \frac{\Eseven}{\Esix \cdot\mathrm{U}(1)},
\]
are K\"ahler and therefore, are equipped with a non-flat, parallel, even Clifford structure of rank $r=2$. Next, the five Wolf spaces
\[
\begin{split}
\EII= \frac{\Esix}{\mathrm{SU}(6)\cdot\mathrm{Sp}(1)},\quad  \EVI=  \frac{\Eseven}{\mathrm{Spin}(12)\cdot\mathrm{Sp}(1)}, \quad \EIX =  \frac{\Eeight}{\Eseven\cdot\mathrm{Sp}(1)}, \\
\FI =  \frac{\Ffour}{\mathrm{Sp}(3)\cdot\mathrm{Sp}(1)}, \quad  \GI = \frac{\Gtwo}{\mathrm{SO}(4)}, \qquad \qquad \qquad \qquad 
\end{split}
\]
which are examples of positive quaternion K\"ahler manifolds, carry a rank of $r=3$ in a non-flat, parallel, Clifford structure.

Thus, seven of the twelve exceptional, compact type, Riemannian symmetric spaces of are either K\"ahler or quaternion K\"ahler. Accordingly, one of their de Rham cohomology generators is represented by a K\"ahler or quaternion K\"ahler form, and any further cohomology generators can be viewed as primitive in the sense of the Lefschetz decomposition.

As seen in the previous sections, the four Cayley-Rosenfeld projective planes, $$\EIII, \quad \EVI, \quad \EVIII , \quad \FII$$ carry a similar structure with $r=10,12,16,9$. 

Thus, among the exceptional symmetric spaces of compact type, there are two spaces that admit two distinct even Clifford structures, namely, the Hermitian symmetric $\EIII$ has even Clifford structures of rank $2$ and of rank $10$, and the quaternion K\"ahler $\EVI$ has even Clifford structures of rank $3$ and rank of $12$. For simplicity, we call \emph{octonionic K\"ahler} the parallel even Clifford structure defined by the vector bundles $E^{10},E^{12},E^{16},E^9$ on the Cayley-Rosenfeld projective planes ($\EIII, \, \EVI, \, \EVIII , \, \FII$). In~conclusion, and with the exceptions of $$\EI = \frac{\Esix}{\Sp{4}}, \quad \EIV= \frac{\Esix}{\Ffour},\quad \EV=\frac{\Esix}{\SU{8}},$$ nine of the twelve exceptional Riemannian symmetric spaces of compact type admit at least one parallel, even Clifford structure. Any of such structures gives rise to a canonical differential form: the~K\"ahler 2-form $\omega$ for the complex K\"ahler one, the quaternion K\"ahler 4-form $\Omega$ for the five Wolf space, \mbox{and a canonical} octonionic K\"ahler 8-form $\Psi$ for the four Cayley-Rosenfeld projective planes. Their~classes are always one of the cohomology generators, and Table \ref{tab6} collects some informations on the exceptional symmetric spaces of compact type. For each of them, the real dimension, the existence of torsion in the integral cohomology, the K\"ahler or quaternion K\"ahler or octonionic K\"ahler (K/qK/oK) property, the Euler characteristic $\chi$, and the Poincar\'e polynomial (up to mid dimension) are listed. 

\begin{table}[H]
\caption{Exceptional, compact type, symmetric spaces.}\label{tab6}
\begin{center}
\scalebox{0.7}[0.7]{
{
\begin{tabular}{cccccc}
\toprule
 & \textbf{dim} &\textbf{torsion}&\textbf{K/qK/oK}&\boldmath$\chi$&\textbf{Poincar\'e polynomial }\boldmath{$P(t)=\sum_{i=0, \dots} b_i t^i$}\\
\midrule
$\EI$&42&yes&&4&$1+t^8+t^9+t^{16}+t^{17}+t^{18}+ \dots $\\
$\EII$&40&yes &qK &36& $1+t^4+t^6+2t^{8}+t^{10}+3t^{12}+ 2t^{14}+3t^{16}+2t^{18}+4t^{20}+\dots $\\
$\EIII$&32& no& K/oK &27&  $1+t^2+t^4+t^6+2(t^{8}+t^{10}+t^{12}+ t^{14})+3t^{16}+\dots $\\
$\EIV$&26& no&&0&$1+t^9+\dots$\\
$\EV$&70&yes &&72&$1+t^6+t^8+t^{10}+t^{12}+2(t^{14}+ t^{16}+t^{18}+ t^{20})+3(t^{22}+t^{24}+t^{26}+t^{28})+4(t^{30}+t^{32})+3t^{34}+\dots $ \\
$\EVI$&64& yes& qK/oK&63&$1+t^4+2t^8+3t^{12}+4t^{16}+5t^{20}+6(t^{24}+t^{28})+7t^{32}+\dots $ \\
$\EVII$&54&no&K&56&$1+t^2+t^4+t^6+t^8+2(t^{10}+t^{12}+t^{14}+ t^{16})+3(t^{18}+ t^{20}+t^{22}+t^{24}+t^{26})+\dots $\\
$\EVIII$&128 &yes&oK&135&$1+t^8+t^{12}+2(t^{16}+ t^{20})+3(t^{24}+t^{28})+5t^{32}+4t^{36}+6(t^{40}+t^{44})+7(t^{48}+t^{52})+8t^{56}+7t^{60}+9t^{64}+\dots $\\
$\EIX$&112& yes&qK& 120&$1+t^4+t^8+2(t^{12}+t^{16})+3 t^{20}+4(t^{24}+t^{28})+5t^{32}+6(t^{36}+t^{40})+7(t^{44}+t^{48}+t^{52})+8t^{56}+\dots $\\
$\FI$&28 & yes&qK&12&$1+t^4+2(t^8+t^{12})+\dots$ \\
$\FII$&16 &no&oK&3&$1+t^8+\dots$ \\
$\GI$&8 & yes& qK&3&$1+t^4+\dots$ \\
\bottomrule
\end{tabular}
}}
\end{center}
\end{table}

Next, we have Table \ref{tab7} which contains the primitive Poincar\'e polynomials $$\widetilde{P}(t)=\sum_{i=0, \dots} \widetilde{b}_i t^i$$ of the nine exceptional Riemannian symmetric spaces that admit an even parallel Clifford structure. Here, the meaning of ``primitive'' varies depending on the considered  K/qK/oK structure.  Thus, the~Hermitian symmetric spaces, $\EIII$ and $\EVII$, are simply polynomials with coefficients the primitive Betti numbers, $$\widetilde{b}_i  = \dim \; (\ker [L_\omega^{n-i+1}: H^i \rightarrow H^{2n-i+2}]),$$ where $L_\omega$ is the Lefschetz operator which multiplies the cohomology classes with the complex K\"ahler form $\omega$, and $n$ is the complex dimension. 

In the positive quaternion K\"ahler setting, the vanishing of odd Betti numbers and the injectivity of the Lefschetz operator $L_\Omega: H^{2k-4} \rightarrow H^{2k}$, $k \leq n$, now occur with $\Omega$ being the quaternion 4-form and $n$ being the quaternionic dimension. A remarkable aspect of the primitive Betti numbers $$\widetilde{b}_{2k} = \dim ( \text{coker}  [L_\Omega: H^{2k-4} \rightarrow H^{2k}])$$ for positive quaternion K\"ahler manifolds is their coincidence with the ordinary Betti numbers of the associated Konishi bundle---the 3-Sasakian manifold fibering over it (cf. ~\cite{GaSBNT} (p. 56)).  

Finally, on the four Cayley-Rosenfeld planes, the vanishing of odd Betti numbers and the injectivity of the map $L_\Phi: H^{2k-8} \rightarrow H^{2k}$ still occur, and are defined by multiplication with the octonionic 8-form $\Phi$, and with $k \leq 2n$, where $n$ is now the octonionic dimension.

\begin{table}[H]
\caption{Primitive Poincar\'e polynomials, $\widetilde{P}(t)=\sum_{i=0, \dots} \widetilde{b}_i t^i$.}\label{tab7}
\begin{center}
\scalebox{0.85}[0.85]{
{
\begin{tabular}{cc}
\toprule
\textbf{Hermitian Symmetric Spaces}&\textbf{K\"ahler Primitive Poincar\'e Polynomial}\\
\midrule
$\EIII$&  $1+t^{8}+t^{16}$\\
$\EVII$&$1+t^{10}+t^{18}$\\
\midrule
 Wolf spaces& Quaternion K\"ahler primitive Poincar\'e polynomial \\\midrule
$\EII$& $1+t^6+t^{8}+t^{12}+ t^{14}+t^{20} $\\
$\EVI$&$1+t^8+t^{12}+t^{16}+t^{20}+t^{24}+t^{32}$\\
$\EIX$&$1+t^{12}+t^{20}+t^{24}+t^{32}+t^{36}+t^{44}+t^{56} $\\
$\FI$&$1+t^8$\\
$\GI$&$1$ \\
\midrule
 Cayley-Rosenfeld projective planes&Octonionic K\"ahler primitive Poincar\'e polynomial \\
\midrule
$\EIII$& $1+t^2+t^4+t^6+t^{8}+t^{10}+t^{12}+ t^{14}+t^{16}$\\
$\EVI$&$1+t^4+t^8+2(t^{12}+t^{16}+t^{20})+3(t^{24}+t^{28}+t^{32})$ \\
$\EVIII$&$1+t^{12}+t^{16}+ t^{20}+t^{24}+t^{28}+t^{32}+t^{36}+t^{40}+t^{44}+t^{48}+t^{52}+t^{56}+t^{60}+t^{64}$\\
$\FII$&$1$ \\
\midrule
Even Clifford exceptional symmetric spaces &Fully primitive Poincar\'e polynomial \\
\midrule
$\EII$& $1+t^6+t^{8}+t^{12}+ t^{14}+t^{20} $\\
$\EIII$&  $1 $\\
$\EVI$&$1+t^{12}+t^{24}$\\
$\EVII$&$1+t^{10}+t^{18}$\\
$\EVIII$&$1+t^{12}+t^{16}+ t^{20}+t^{24}+t^{28}+t^{32}+t^{36}+t^{40}+t^{44}+t^{48}+t^{52}+t^{56}+t^{60}+t^{64}$\\
$\EIX$&$1+t^{12}+t^{20}+t^{24}+t^{32}+t^{36}+t^{44}+t^{56} $\\
$\FI$&$1+t^8$\\
$\FII$&$1$ \\
$\GI$&$1$ \\
\bottomrule
\end{tabular}
}}
\end{center}
\end{table}

\section{Grassmannians}\label{Grassmannians}
Table \ref{tab4} contains the following three series of Grassmannians:
\begin{equation}\label{Gra}
Gr_{8}(\RR^{n+8})=\frac{\mathrm{SO}(n+8)}{\mathrm{SO}(n)\times \mathrm{SO}(8)}, \; \; Gr_{4}(\CC^{n+4})=\frac{\mathrm{SU}(n+4)}{\mathrm{S}(\mathrm{U}(n)\times \mathrm{U}(4))}, \;  \;Gr_{2}(\HH^{n+2})=\frac{\mathrm{Sp}(n+2)}{\mathrm{Sp}(n)\times \mathrm{Sp}(2)},
\end{equation}
that carry an even Clifford structure of rank $r=8,6,5$, respectively.

To define them, recall that the \, $\Spin{8} \subset \SO{16} \subset \mathrm{Cl} \, \OO$ \; subgroup is generated by the following~matrices:
\begin{equation}\label{muv}
m_{u,v}= \left(
\begin{array}{cc}
-R_u \circ R_{\bar v}& 0 \\
0& -R_{\bar u} \circ R_v
\end{array}
\right) = m_u \circ m_v, 
\end{equation}
where 
\[
m_u= \left(
\begin{array}{cc}
0& R_u \\
-R_{\bar u}& 0
\end{array}
\right) , \; \; m_v= \left(
\begin{array}{cc}
0& R_v \\
-R_{\bar v}& 0
\end{array}
\right)
\]
(cf. Ref. \cite{BryRSL}). For the orthonormal $u,v \in S^7 \subset \OO$, matrices $m_{u,v}$ satisfy the properties
$$m_{v,u} =-m_{u,v}, \qquad m_{u,v}^2 = - \Id.$$

On the other hand, recall that
$$ T Gr \cong W \otimes W^\perp, $$
where $W$ is the tautological vector bundle, and $W^\perp$ its orthogonal complement in the ambient linear~space. 

Finally, it is convenient to recall that the complex structure and the local compatible hypercomplex structures of the following complex K\"ahler and quaternion K\"ahler Grassmannians,

\begin{equation*}
Gr_{2}(\RR^{n+2})\cong Q_n \subset \CC P^{n+1}, \qquad Gr_{4}(\RR^{n+4})=\frac{\mathrm{SO}(n+4)}{\mathrm{SO}(n)\times \mathrm{SO}(4)}, \qquad Gr_{2}(\CC^{n+2})=\frac{\mathrm{SU}(n+2)}{\mathrm{S}(\mathrm{U}(n)\times \mathrm{U}(2))},
\end{equation*}
come visibly from their elements, which are, respectively, oriented $2$-planes, oriented $4$-planes, and complex $2$-planes.

Look first at Grassmannians in the first of the three series \eqref{Gra}, namely, at $Gr_{8}(\RR^{2m+8})$, referring, for simplicity, to the case of an even dimensional ambient space, $\RR^{2m+8}$, and thus insuring the spin property of the Grassmannian. From the local orthonormal bases $w_1 \dots , w_8$ and $w_1^\perp , w_2^\perp , \dots , w_{2m-1}^\perp, w_{2m}^\perp$
of sections respectively of $W$ and of $W^\perp$, one gets the following local basis of the tangent vectors of $Gr_{8}(\RR^{2m+8})$:
\begin{equation}\label{thex}
\begin{aligned}
&x_{1,1}=w_1 \otimes w_1^\perp , &\dots \dots \;  & &\quad x_{8,1}= w_8 \otimes w_1^\perp ,\\
&x_{1,2}=w_1 \otimes w_2^\perp , &\dots \dots \;  & &\quad x_{8,2}= w_8 \otimes w_2^\perp ,\\
& \dots \dots \dots \dots \dots &\dots \dots \;  && \quad \dots \dots \dots \dots \dots \\
&x_{1,2m-1}=w_1 \otimes w_{2m-1}^\perp , &\dots \dots \; & &\quad x_{8,2m-1}= w_8 \otimes w_{2m-1}^\perp , \\
&x_{1,2m}=w_1\otimes w_{2m}^\perp  ,&\dots \dots \; & &\quad x_{8,2m}= w_8 \otimes w_{2m}^\perp .
\end{aligned}
\end{equation}

The listed 8-ples of sections can be written formally as octonions, i.e., for $\alpha=1, \dots , 2m$,
\begin{equation}\label{thexx}
\vec x_\alpha= (x_{1,\alpha}, x_{2, \alpha }, \dots , x_{8,\alpha }) = x_{1,\alpha }+ix_{2,\alpha }+ \dots + hx_{8,\alpha},
\end{equation}
and this can be ordered as a $n$-ple of pairs of octonions:
\[
\big( (\vec x_1, \vec x_2), \dots , (\vec x_{2m-1}, \vec x_{2m}) \big) \in (\OO \oplus \OO)^m.
\]

The even Clifford structure on $Gr_{8}(\RR^{2m+8})$ can then be defined by looking at a rank 8 Euclidean vector bundle $E \subset \mathrm{End}^- (T\, Gr_{8}(\RR^{2m+8}))$ that satisfies the condition of being locally generated by anti-commuting orthogonal complex structures. Here, this is denoted by $m_1, m_2, \dots , m_8$, in~correspondence with the  $m_1,m_i,\dots,m_h$. The existence of such an $E$ is insured by the holonomy structure $\mathrm{SO}(2m)\times \mathrm{SO}(8)$ of the Grassmannian, by its spin property, and by the given description of $\Spin{8}$. Accordingly, if $u,v$ are local sections of $E$, we can look at them as octonions in the basis $m_1, m_2, \dots , m_8$. 

For any such orthonormal pair $(u,v)$, look at $u \wedge v$ as a section of $\Lambda^2 E$, and define
\[
\varphi: \Lambda^2 E \rightarrow \mathrm{End}^- (T\, Gr_{8}(\RR^{2m+8}))
\] 
by
\begin{equation}\label{ourphi}
\varphi (u \wedge v) \big( (\vec x_1, \vec x_2), \dots , (\vec x_{2m-1}, \vec x_{2m}) \big) = \big( m_{u,v} (\vec x_1, \vec x_2), \dots , m_{u,v}( \vec x_{2m-1}, \vec x_{2m}) \big),
\end{equation}
i.e., by diagonally applying the matrix \eqref{muv}. When this is extended by the Clifford composition, this gives the Clifford morphism
\[
\varphi: \; \mathrm{Cl}^0 E \rightarrow \End{T\, Gr_{8}(\RR^{2m+8})}.
\] 

Thus, the following theorem applies.

\begin{Theorem} There is a rank 8 vector sub-bundle $E \subset \mathrm{End}^- (T\, Gr_{8}(\RR^{2m+8}))$ that is locally generated by the anti-commuting orthogonal complex structures $m_1, m_2, \dots , m_8$, and $E$ defines an even, non-flat, parallel Clifford structure of rank 8 on $Gr_{8}(\RR^{2m+8})$. The morphism
\[
\varphi: \; \mathrm{Cl}^0 E \rightarrow \mathrm{End} (T\, Gr_{8}(\RR^{2m+8}))
\] 
is given by the Clifford extension of the map,  
\[
u \wedge v \in \Lambda^2 E \longrightarrow [m_{u,v}: (\OO \oplus \OO)^m \rightarrow (\OO \oplus \OO)^m],
\]
which is defined by diagonally applying the matrix \, $m_{u,v}$ \,. Here, $u,v$ are local orthonormal sections of $E$; thus, unitary orthogonal octonions in the basis $m_1, m_2, \dots , m_8$, so that $m_{u,v}$ acts diagonally on the $m$-ples of pairs of local tangent vectors,
\[
\big( (\vec x_1, \vec x_2), \dots , (\vec x_{2m-1}, \vec x_{2m}) \big),
\]
that can be looked at as elements of $(\OO \oplus \OO)^m$.
\end{Theorem}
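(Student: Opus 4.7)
The plan is to construct $E$ by fibrewise transport of the explicit octonionic realization of $\Spin{8}\subset\SO{16}$ from \eqref{muv}, and then to verify, in order, the Clifford relations, the Clifford morphism $\varphi$, and its parallelism and non-flatness. First I would choose local orthonormal frames $w_1,\dots,w_8$ of the tautological bundle $W$ and $w_1^\perp,\dots,w_{2m}^\perp$ of $W^\perp$, identify $w_1,\dots,w_8$ with the octonionic basis $1,i,j,k,e,f,g,h$, and partition the second frame into $m$ ordered pairs. This yields the local identification of $TGr_8(\RR^{2m+8}) \cong W\otimes W^\perp$ with $m$ copies of $\OO\oplus\OO$ already displayed in \eqref{thex}--\eqref{thexx}. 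The local sections $m_1,\dots,m_8$ of $\mathrm{End}^-(TGr_8(\RR^{2m+8}))$ are then defined by applying the single-octonion matrices $m_{e_\alpha}$ of \eqref{muv} diagonally on each of the $m$ copies, and $E$ is declared to be their $\RR$-span.

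The second step is to check the Clifford relations $m_\alpha^2=-\Id$ and $m_\alpha m_\beta+m_\beta m_\alpha=0$ for $\alpha\neq\beta$, which reduces to the block-diagonal octonionic identities $R_u\circ R_{\overline u}=\Id$ for unit $u\in\OO$ and $R_u\circ R_{\overline v}+R_v\circ R_{\overline u}=0$ for orthogonal $u,v\in S^7$. From this, composition immediately gives $\varphi(u\wedge v)=m_{u,v}$ for orthonormal local sections $u,v$ of $E$, and skew-symmetry on $\Lambda^2 E$ follows from $m_{u,v}^2=-\Id$. Extending by the universal property of the even Clifford algebra then produces the algebra bundle morphism $\varphi:\mathrm{Cl}^0 E\to \End{TGr_8(\RR^{2m+8})}$.

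The third step is global consistency and parallelism. The Riemannian holonomy of $Gr_8(\RR^{2m+8})$ reduces to $\SO{2m}\times\SO{8}$, and the assumption $n=2m$ is exactly what permits lifting the $\SO{8}$ factor to $\Spin{8}$. Under a change of local orthonormal frame of $W$ by $A\in\SO{8}$, the eight generators $m_1,\dots,m_8$ transform through the vector representation of $\Spin{8}$ related by triality to the defining action of $\SO{8}$ on $W$; their $\RR$-span $E$ is therefore an invariantly defined rank $8$ sub-bundle of $\mathrm{End}^-(TGr_8(\RR^{2m+8}))$. Equipping $E$ with the normalized trace metric and with the connection induced from the Levi-Civita connection via the splitting $TGr=W\otimes W^\perp$ yields a metric connection $\nabla^E$ for which $\varphi(\nabla_X^E\sigma)=\nabla_X\varphi(\sigma)$, as a consequence of the parallelism of the two holonomy-invariant factors; non-flatness follows at once from the fact that $Gr_8(\RR^{2m+8})$ is an irreducible Riemannian symmetric space of compact type.

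The main obstacle I expect is precisely the global well-definedness of $E$: the matrices $m_{u,v}$ are built from the octonionic algebra structure transported to $W$ through a chosen orthonormal frame, and the compatibility of these local models under $\SO{8}$-frame changes hinges delicately on triality and on the spin condition ensured by $n=2m$. Once this invariance is carefully established, all the remaining axioms of a parallel even Clifford structure reduce to the homogeneous symmetric-space geometry of the Grassmannian and to the basic matrix identities collected in \eqref{muv}.
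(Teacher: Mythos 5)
Your proposal follows essentially the same route as the paper: you build $E$ locally from the diagonal action of the octonionic matrices $m_u$ of \eqref{muv}, verify the Clifford relations through the identities $R_u\circ R_{\bar u}=\Id$ and $R_u\circ R_{\bar v}+R_v\circ R_{\bar u}=0$, define $\varphi(u\wedge v)=m_{u,v}$ and extend by Clifford composition, and obtain global existence, parallelism, and non-flatness from the holonomy $\SO{2m}\times\SO{8}$, the spin property of $Gr_{8}(\RR^{2m+8})$ (via triality), and the symmetric-space geometry, exactly as the paper does. The construction and the justifications you give are correct and coincide with the paper's argument.
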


A similar statement holds for the second series of Grassmannians in \eqref{Gra}, assuming again an~even dimensional ambient space, $\CC^{2m+4}$.   Let  $w_1, w_2, w_3,w_4$ and $w_1^\perp, w_2^\perp, \dots , w_{2m-1}^\perp, w_{2m}^\perp$ be the local orthonormal bases of $W$ and $W^\perp$, respectively. Define the following local tangent vector fields as local sections of $T\, Gr_{4}(\CC^{2m+4}) \cong W \otimes W^\perp$: 
\begin{equation}\label{thez}
\begin{aligned}
&z_{1,1}=w_1 \otimes w_1^\perp , &\dots \dots \;  & &\quad z_{4,1}= w_4 \otimes w_1^\perp ,\\
&z_{1,2}=w_1 \otimes w_2^\perp , &\dots \dots \;  & &\quad z_{4,2}= w_4 \otimes w_2^\perp ,\\
& \dots \dots \dots \dots \dots &\dots \dots \;  && \quad \dots \dots \dots \dots \dots \\
&z_{1,2m-1}=w_1 \otimes w_{2m-1}^\perp , &\dots \dots \; & &\quad z_{4,2m-1}= w_4 \otimes w_{2m-1}^\perp , \\
&z_{1,2m}=w_1\otimes w_{2m}^\perp  ,&\dots \dots \; & &\quad z_{4,2m}= w_4 \otimes w_{2m}^\perp.
\end{aligned}
\end{equation}

Again, look at the above lines as
\begin{equation}\label{thezz}
\vec z_\alpha= (z_{1,\alpha}, z_{2, \alpha }, z_{3, \alpha} z_{4,\alpha }) \in \CC^4,
\end{equation}
($\alpha=1, \dots , 2m$), and order them as $m$-ples of pairs:
\[
\big( (\vec z_1, \vec z_2), \dots , (\vec z_{2n-1}, \vec z_{2n}) \big) \in (\CC^4 \oplus \CC^4)^m.
\]

Now consider the vector sub-space $F=<1,i,j,k,e,f> \subset \OO$, and note that the corresponding operators ($m_u$, with $u \in F$), act on the complex vector space ($\CC^4$). Similarly to what was described for the real Grassmannians, there is a vector sub-bundle, $E^6 \subset \mathrm{End}^- (T \; Gr_4(\CC^{2m+4}))$, that is locally generated by the anti-commuting orthogonal complex structures $m_1,m_2, \dots , m_6$, which corresponds to $m_1,m_i,m_j,m_k,m_e,m_f$. This is due to the holonomy ($\mathrm{S(U}(2m)\times \mathrm{U}(4))$) of the Grassmannian and its spin property. If $(u,v)$ is an orthonormal pair of sections of $E^6$, then $u \wedge v$ is a section of $\Lambda^2 E$, \mbox{and the map}
\[
\varphi: \Lambda^2 E \rightarrow \mathrm{End}^-(T\, Gr_{4}(\CC^{2m+4})),
\] 
given by
\begin{equation}
\varphi (u \wedge v) \big( (\vec z_1, \vec z_2), \dots , (\vec z_{2m-1}, \vec z_{2m}) \big) = \big( m_{u,v} (\vec z_1, \vec z_2), \dots , m_{u,v}( \vec z_{2m-1}, \vec z_{2m}) \big),
\end{equation}
is extended, by Clifford composition, to the Clifford morphism
\[
\varphi: \; \mathrm{Cl}^0 E \rightarrow \End{T\, Gr_{4}(\CC^{2m+4})}.
\] 

Note that the holonomy group ${\mathrm{S(U}(2m)\times \mathrm{U}(4))}$ acts on the model tangent space $\CC^{8m}$, and the orthogonal representation 
\[
\mathrm{S(U}(2m)\times \mathrm{U}(4)) \rightarrow \mathrm{SU}(8m)
\]
defines an equivariant algebra morphism ($\varphi: \mathrm{Cl}^0_6 \rightarrow \End{\CC^{8m}}$ mapping $\mathfrak{su}(4)=\mathfrak{spin}(6) \subset \mathrm{Cl}^0_6$ into $\mathfrak{su}(8m) \subset \End{\CC^{8m}}$). The parallel, non-flat feature of $\varphi$ again follows from the holonomy-based construction. This gives the following theorem.

\begin{Theorem} There is a rank 6 vector sub-bundle, $E \subset \mathrm{End}^- (T\, Gr_{4}(\CC^{2m+4}))$, that is locally generated by the anti-commuting orthogonal complex structures $m_1, m_2, \dots , m_6$, and $E$ defines an even, non-flat, parallel Clifford structure of rank 6 on $Gr_{4}(\CC^{2m+4})$. The morphism
\[
\varphi: \; \mathrm{Cl}^0 E \rightarrow \End{T\, Gr_{4}(\CC^{2m+4})}
\] 
is given by Clifford extension of the map:  
\[
u \wedge v \in \Lambda^2 E \longrightarrow [m_{u,v}: (\CC^4 \oplus \CC^4)^n \rightarrow (\CC^4 \oplus \CC^4)^m],
\]
which is defined by diagonally applying the matrix \; $m_{u,v}$. Here, $u,v$ are local orthonormal sections of $E$, and are thus unitary orthogonal in the basis $m_1, m_2, \dots , m_6$, so that $m_{u,v}$ acts diagonally on the $m$-ples of pairs of local tangent vectors:
\[
\big( (\vec z_1, \vec z_2), \dots , (\vec z_{2m-1}, \vec z_{2m}) \big),
\]
which can be viewed as elements of $(\CC^4 \oplus \CC^4)^m$.
\end{Theorem}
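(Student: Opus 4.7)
The plan is to mirror the preceding proof for $Gr_8(\RR^{2m+8})$, replacing the role of $\Spin{8}$ by the accidental isomorphism $\Spin{6}\cong\SU{4}$. First I would fix the geometric model: at a point of $Gr_{4}(\CC^{2m+4})$ use $TGr\cong W\otimes_\CC W^\perp$ and the local sections in \eqref{thez}--\eqref{thezz} to organize a tangent vector as an $m$-tuple
\[
\bigl((\vec z_1,\vec z_2),\dots,(\vec z_{2m-1},\vec z_{2m})\bigr)\in(\CC^4\oplus\CC^4)^m.
\]
Via an $\RR$-linear identification $\CC^4\cong\OO$ fixing a complex structure $J$ on $\OO$, each pair $(\vec z_{2\beta-1},\vec z_{2\beta})$ lives in $\OO\oplus\OO$, which is the ambient space on which the Cayley operators $m_u$ of \eqref{muv} act.

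Next I would single out $F=\langle 1,i,j,k,e,f\rangle\subset\OO$ and check the Clifford relations at the even level: for orthonormal $u,v\in F$ the composition $m_{u,v}=m_u\circ m_v$ is skew-symmetric, squares to $-\Id$, and the various $m_{u,v}$ anti-commute when the pairs $\{u,v\}$ differ, so that they span a copy of $\mathfrak{spin}(6)$. The main obstacle is to see that each $m_{u,v}$ is $\CC$-linear on $\CC^4\oplus\CC^4$, i.e.\ commutes with $J$: although the individual $m_u$ are not $\CC$-linear (the full $\mathrm{Cl}_6$ cannot act $\CC$-linearly on $\CC^4$), the even subalgebra $\mathrm{Cl}^0_6\cong\End(\CC^4)$ does, precisely by $\mathfrak{spin}(6)\cong\mathfrak{su}(4)$. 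I would exhibit this by showing, at a chosen basepoint, that the six generators $m_1,\dots,m_6$ can be arranged so that all products $m_u m_v$ are carried by the isomorphism $\OO\cong\CC^4$ into $\mathfrak{u}(4)$, and then repeat the argument blockwise on each of the $m$ copies of $\CC^4\oplus\CC^4$.

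With this in place, define $E\subset\End^-(T\,Gr_{4}(\CC^{2m+4}))$ locally by $m_1,\dots,m_6$. The map
\[
\varphi(u\wedge v)\bigl((\vec z_1,\vec z_2),\dots,(\vec z_{2m-1},\vec z_{2m})\bigr)=\bigl(m_{u,v}(\vec z_1,\vec z_2),\dots,m_{u,v}(\vec z_{2m-1},\vec z_{2m})\bigr)
\]
extends by Clifford composition to an algebra morphism $\varphi:\mathrm{Cl}^0 E\to\End(T\,Gr_{4}(\CC^{2m+4}))$ because the $m_u$ satisfy the Clifford relations.

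To globalise and obtain the \emph{parallel, non-flat} conclusion I would invoke the holonomy argument sketched in the paragraph preceding the theorem: the Grassmannian is spin, with Riemannian holonomy $\mathrm{S}(\U{2m}\times\U{4})\subset\SU{8m}$, whose $\U{4}$-factor contains $\SU{4}\cong\Spin{6}$. This inclusion gives an $\Spin{6}$-reduction of the orthonormal frame bundle that is preserved by the Levi--Civita connection; consequently both the rank $6$ bundle $E$ and the Clifford morphism $\varphi$ are globally defined and parallel. Non-flatness is inherited from the curvature of the irreducible symmetric space, in agreement with Table~\ref{tab4}. The equivariance of $\varphi$ sending $\mathfrak{su}(4)=\mathfrak{spin}(6)\subset\mathrm{Cl}^0_6$ into $\mathfrak{su}(8m)\subset\End(\CC^{8m})$ is then automatic, finishing the argument.
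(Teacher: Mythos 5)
Your proposal follows essentially the same route as the paper: the same model of the tangent space as $m$-tuples in $(\CC^4\oplus\CC^4)^m$ via \eqref{thez}--\eqref{thezz}, the same subspace $F=\langle 1,i,j,k,e,f\rangle\subset\OO$ with the operators $m_{u,v}$ of \eqref{muv} applied diagonally and extended by Clifford composition, and the same appeal to the holonomy $\mathrm{S}(\U{2m}\times\U{4})$, the spin property, and the identification $\mathfrak{spin}(6)\cong\mathfrak{su}(4)$ to obtain the global, parallel, non-flat conclusion; your extra care about $\CC$-linearity of the even products is exactly what the paper encodes in that isomorphism. One incidental slip: the $m_{u,v}$ with disjoint pairs $\{u,v\}$ commute rather than anti-commute, but this does not affect the argument, since all that is used is that they span a copy of $\mathfrak{spin}(6)$.
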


\begin{Remark} When the ambient linear spaces have odd dimensions, similar statements hold, but the Clifford vector bundles $E^8$ and $E^6$ are defined only locally. This fact is due to the spin/non-spin property of the two series of Grassmannians, $Gr_{8}(\RR^{n+8})$ and $Gr_{4}(\CC^{n+4})$, whose second Stiefel Whitney class satisfies $w_2(Gr)=n u$, where $0 \neq u \in H^2(Gr;\ZZ_2)$ (cf. Table \ref{tab4}, where in the non-spin cases the even Clifford structure is referred to as ``projective'').
\end{Remark}

For Grassmannians in the last series, the spin property of $Gr_{2}(\HH^{n+2})$ holds for all values of $n$, due to the vanishing of $H^2(Gr;\ZZ_2)$. The Clifford morphism $\varphi$ is here constructed as follows.  Let~$w_1, w_2$ and $w_1^\perp, \dots , w_{n}^\perp$ be local orthonormal bases of $W$ and $W^\perp$, respectively. Define the following local tangent vector fields as local sections of $T\, Gr_{2}(\HH^{n+2}) \cong W \otimes W^\perp$:
\begin{equation}\label{theh}
\begin{aligned}
&h_{1,1}=w_1 \otimes w_1^\perp , & \quad & h_{2,1}= w_2 \otimes w_1^\perp ,\\
&h_{1,2}=w_1 \otimes w_2^\perp , &\quad &h_{2,2}= w_2 \otimes w_2^\perp ,\\
& \dots \dots \dots \dots \dots & \quad & \dots \dots \dots \dots \dots \\
&h_{1,n-1}=w_1 \otimes w_{n-1}^\perp , &\quad &h_{2,n-1}= w_2 \otimes w_{n-1}^\perp , \\
&h_{1,n}=w_1\otimes w_{n}^\perp  ,&\quad &h_{2,n}= w_2 \otimes w_{2n}^\perp.
\end{aligned}
\end{equation}

Next, let $u,v$ be local orthonormal sections of $E^5$, the sub-bundle of $\mathrm{End}^+ (T\, Gr_{2}(\HH^{n+2}))$ that is locally generated by the Clifford system $C_4$, whose existence is insured by the holonomy of this spin Grassmannian. The composition $uv$ acts diagonally on the $n$-ples:
\[
\big( \vec h_1, \vec h_2, \dots , \vec h_{n-1}, \vec h_{n} \big)=\big ( (h_{1,1}, h_{2,1}), (h_{1,2}, h_{2,2}) , \dots ((h_{1,n}, h_{2,n})  \big) \in (\HH \oplus \HH)^n,
\]
and
\[
\varphi: \; \mathrm{Cl}^0 E \rightarrow \End{T\, Gr_{2}(\HH^{n+2})}
\] 
is given by the Clifford extension of the map
\[
u \wedge v \in \Lambda^2 E \longrightarrow [uv: (\HH \oplus \HH)^{n} \rightarrow  (\HH \oplus \HH)^{n} ].
\]

This gives the following theorem.

\begin{Theorem} There is a rank 5 vector sub-bundle, $E \subset \mathrm{End}^+ (T\, Gr_{2}(\HH^{n+2}))$, that is locally generated by the anti-commuting orthogonal self-dual involutions $\sigma_1, \sigma_2, \dots , \sigma_5$, and $E$ gives rise to an even, non-flat, parallel Clifford structure of rank 5  on $Gr_{2}(\HH^{n+2})$. The Clifford morphism $\varphi$ is constructed as follows. Let $u,v$ be local orthonormal sections of $E$, and let the composition $uv$ act diagonally on the $n$-ples:
\[
\big( \vec h_1, \vec h_2, \dots , \vec h_{n-1}, \vec h_{n} \big),
\]
that can be looked at as elements of $(\HH \oplus \HH)^n$.
Then, the morphism
\[
\varphi: \; \mathrm{Cl}^0 E \rightarrow \End{T\, Gr_{2}(\HH^{n+2})}
\] 
is given by the Clifford extension of the map  
\[
u \wedge v \in \Lambda^2 E \longrightarrow [uv: (\HH \oplus \HH)^{n} \rightarrow  (\HH \oplus \HH)^{n} ].
\]
\end{Theorem}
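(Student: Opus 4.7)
The plan is to mirror the construction used in the preceding two theorems for $Gr_8(\RR^{n+8})$ and $Gr_4(\CC^{n+4})$, now exploiting the $\Sp{2}\cdot\Sp{1}$-structure on the ``two-plane'' factor instead of the $\Spin{8}$ or $\Spin{6}$ structures. First I would use the standard identification $T\,Gr_2(\HH^{n+2}) \cong W \otimes W^\perp$, where $W$ is the rank $2$ tautological quaternionic bundle and $W^\perp$ its orthogonal complement. After choosing local orthonormal frames $w_1,w_2$ of $W$ and $w_1^\perp,\dots,w_n^\perp$ of $W^\perp$, the $\vec h_\alpha=(h_{1,\alpha},h_{2,\alpha})$ in \eqref{theh} organize the tangent vectors into an $n$-tuple of elements of $\HH\oplus\HH = \RR^8$, so that the fibre of $T\,Gr_2(\HH^{n+2})$ is naturally modelled on $(\HH\oplus\HH)^n$.

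Next I would invoke Section \ref{quaternionic}: on each $\HH\oplus\HH=\RR^8$ factor the Clifford system $C_4=(\I_1,\dots,\I_5)$ displayed in \eqref{eq:IH} provides five anti-commuting self-adjoint involutions, and by Corollary \ref{involutions} the subgroup of $\SO{8}$ preserving the span $E^5=\langle\I_1,\dots,\I_5\rangle$ is precisely $\Sp{2}\cdot\Sp{1}$, with the $\Sp{2}$-factor acting on $E^5\cong\RR^5$ via the double cover $\Sp{2}\cong\Spin{5}\to\SO{5}$. I would then define, locally on $Gr_2(\HH^{n+2})$, the endomorphisms $\sigma_\alpha$ by making $\I_\alpha$ act on the $W$-slot and tensoring with the identity on $W^\perp$; equivalently, $\sigma_\alpha$ acts diagonally on the $n$ pairs $\vec h_\alpha\in\HH\oplus\HH$. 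Because the holonomy group $\Sp(n)\times\Sp(2)$ of the Grassmannian preserves the rank-$5$ span $E^5$ (the $\Sp(n)$-factor acts trivially on it, the $\Sp(2)$-factor by $\SO(5)$ rotations), the transition functions relating two local choices lie in $\SO(5)$, so the bundle $E^5\subset\mathrm{End}^+(T\,Gr_2(\HH^{n+2}))$ patches together globally.

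With the bundle in hand, I would construct the Clifford morphism $\varphi:\mathrm{Cl}^0 E^5 \to \End{T\,Gr_2(\HH^{n+2})}$ by declaring $\varphi(u\wedge v)$ to be the diagonal action of the composition $uv$ on $(\HH\oplus\HH)^n$, for local orthonormal sections $u,v$ of $E^5$; since the $\sigma_\alpha$ are self-adjoint and anti-commuting, each such $uv$ is a skew endomorphism, so $\varphi(\Lambda^2 E^5)\subset\mathrm{End}^-(T\,Gr_2(\HH^{n+2}))$ as required, and the universal property of the Clifford algebra lets us extend $\varphi$ multiplicatively. Parallelism of $\varphi$ with respect to the Levi-Civita connection follows from the fact that $E^5$ is a holonomy-invariant subbundle: the induced metric connection on $E^5$ makes $\varphi$ connection-preserving, and non-flatness is inherited from the curvature of the irreducible symmetric space $Gr_2(\HH^{n+2})$.

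The main obstacle I anticipate is not the local algebra, which is purely a repetition of the $C_4$ story of Section \ref{quaternionic}, but rather verifying the global patching cleanly: one must check that the $\Sp(2)$ summand of the isotropy group acts on the five-dimensional span $\langle\I_1,\dots,\I_5\rangle$ exactly via the spin cover $\Sp(2)\to\SO(5)$, so that the transitions of $E^5$ genuinely lie in $\SO(5)$ and yield a well-defined, globally parallel subbundle of $\End{T\,Gr_2(\HH^{n+2})}$. Once this is established, the spin property of $Gr_2(\HH^{n+2})$ for all $n$, which follows from $H^2(Gr_2(\HH^{n+2});\ZZ_2)=0$ and is noted just before the statement, guarantees that no projective (i.e. $\ZZ_2$-obstructed) correction is needed, in contrast to the real and complex Grassmannian cases in odd dimensions.
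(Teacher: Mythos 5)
Your proposal is correct and follows essentially the same route as the paper, which deduces the theorem directly from the construction given just before it: the identification $T\,Gr_2(\HH^{n+2})\cong W\otimes W^\perp$, the local Clifford system $C_4=(\I_1,\dots,\I_5)$ of Section \ref{quaternionic} acting diagonally on the $n$-ples in $(\HH\oplus\HH)^n$, global existence and parallelism guaranteed by the holonomy of the spin Grassmannian (with $H^2(Gr;\ZZ_2)=0$), and the Clifford extension of $u\wedge v\mapsto uv$. The extra attention you give to the $\Sp{2}\cong\Spin{5}\to\SO{5}$ action on the span of the involutions is precisely the point the paper delegates to Corollary \ref{involutions} and the holonomy-based argument, so no genuinely different ideas are involved.
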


\begin{Example} Here, we briefly list some properties of the 16-dimensional examples that are included in the just-described even Clifford structures. More details can be found in Ref. \cite{PicSGC}.
From the first series of Grassmannians, one has the "complex octonionic projective line" $$(\CC \otimes \OO) P^1 \cong Gr_8(\RR^{10}) \cong Q_8 \subset \CC P^9,$$ which is totally geodesic in $\EIII$.
There are two parallel even Clifford structures of rank 2 (complex K\"ahler) and of rank 8. Accordingly,
\[
\mathrm{Poin}_{_{Gr_8 (\RR^{10})}} = 1+t^2+t^4+t^6+2t^8+t^{10}+t^{12}+t^{14}+t^{16}.
\]

Next, from the second series, one has the ``third Severi variety'' $$Gr_4(\CC^{6}) \cong V_8^{14} \subset \CC P^{14}.$$
There are three parallel even Clifford structures of rank 2 (complex K\"ahler), rank 3 (quaternion K\"ahler), and rank 6. Here, 
\[
\mathrm{Poin}_{_{Gr_4 (\CC^{6})}} = 1+t^2+2t^4+2t^6+3t^8+2t^{10}+2t^{12}+t^{14}+t^{16}.
\]

Finally, from the third series, one gets $Gr_2(\HH^4),$ with its two families of 2-planes ($\HH P^2$) lying on the Grassmannian and satisfying the classical intersection properties of the Klein quadric, and
\[
\mathrm{Poin}_{_{Gr_2 (\HH^{4})}} = 1+t^4+2t^8+t^{12}+t^{16}.
\]
\end{Example}

\begin{Example} Finally, we mention some higher dimensional examples. First, we address the $32$-dimensional Wolf space $Gr_8(\RR^{12}),$ that has three non-flat, even Clifford structures: two of rank 3, corresponding to the two quaternion K\"ahler structures (in correspondence with two different ways to define hypercomplex structures on the planes on any $Gr_4(\RR^{n+4})$), and one of rank 8, described in this section.
Indeed, $Gr_8(\RR^{12})$ can be looked at as the ``quaternion octonionic'' projective line ($(\HH \otimes \OO)P^1$) that is a total geodesic sub-manifold of the exceptional symmetric space $(\HH \otimes \OO)P^2 \cong \EVI$, cf. \cite{EscRGL}.  Its Poincar\'e polynomial,
\[
\mathrm{Poin}_{_{Gr_4 (\RR^{12})}} = 1+2t^4+4t^8+5t^{12}+6t^{16}+5t^{20}+4t^{24}+2t^{28}+t^{32},
\]
exhibits the presence of two quaternion K\"ahler 4-forms and an ``octonionic K\"ahler'' $8$-form ($\Psi$). The latter is related to one that is defined on $\EVI$ through its holonomy group, $\Spin{12} \cdot \Sp{1}$ (cf. \cite{PicCSE}).

Next, the $64$-dimensional Grassmannian $$Gr_8(\RR^{16})= \frac{\SO{16}}{\SO{8} \times \SO{8}}$$ supports, besides the just-described parallel even Clifford structure of rank 8, another similar structure obtained by interchanging the roles of the two vector bundles $W$ and $W^\perp$, i.e., by operating through the $m_{u,v}$ on elements of $W^\perp$. The real cohomology
\begin{equation}\label{coho}
H^*(Gr_8(\RR^{16})) \cong \frac{\mathbb R [e, p_1, p_2, p_3, e^\perp, p_1^\perp, p_2^\perp, p_3^\perp]}{ee^\perp =0, \; (1+p_1+ p_2+ p_3)(1+p_1^\perp+ p_2^\perp,+p_3^\perp)=1} \;,
\end{equation}
in terms of the Pontrjagin classes $p_\alpha, p_\alpha^\perp$ and Euler classes $e, e^\perp$ of $W$ and $W^\perp$ gives rise to the Poincar\'e~polynomial
\[
\mathrm{Poin}_{_{Gr_8 (\RR^{16})}} = 1+t^4+4t^8+5t^{12}+9t^{16}+11t^{20}+15t^{24}+15t^{28}+18t^{32}+ \dots \; .
\]

These two mentioned even Clifford structures descend to a unique even, parallel Clifford structure of rank 8 on the smooth $\mathbb Z_2$-quotient
\[
Gr^\perp_8(\RR^{16}) = Gr_8(\RR^{16})/\perp
\]
by the orthogonal complement involution $\perp$. The quotient $Gr^\perp_8(\RR^{16})$ turns out to be a totally geodesic, half dimensional sub-manifold of $\EVIII$ and can be viewed as the ``projective line'' ($(\OO \otimes \OO)P^1$) over the ``{octonionic octonions}'' \cite{EscRGL}. For the computation of the cohomology of $Gr^\perp_8(\RR^{16})$, just note that the involution $\perp$ identifies $p_1 \rightarrow p_1^\perp, p_2 \rightarrow p_2^\perp, p_3 \rightarrow p_3^\perp, e \rightarrow e^\perp$. This, due to the relations in \eqref{coho}, allows only the $p_1^2,e,p_1^4, p_1^2 e,p_1^6, p_1^4 e,p_1^8,p_1^6 e$ classes to survive up to dimension 32. This gives the Poincar\'e polynomial
\[
\mathrm{Poin}_{_{Gr^\perp_8 (\RR^{16})}} = 1+2t^8+2t^{16}+2t^{24}+2t^{32}+ \dots \;.
\]
\end{Example}





\begin{sloppypar}
\printbibliography
\end{sloppypar}

\end{document}